\newtheorem{thm}{Theorem}[section]
\newtheorem{prop}[thm]{Proposition}
\newtheorem{conj}[thm]{Conjecture}
\newtheorem{lem}[thm]{Lemma}
\theoremstyle{definition}
\newtheorem{exmp}[thm]{Example}
\makeatletter \@addtoreset{equation}{section}
\def\S{  \mathfrak{S}}
\def\G{  \mathfrak{G}}
\def\Newton{  \mathrm{Newton}}
\def\BPD{  \mathrm{BPD}}
\def\supp{  \mathrm{supp}}
\begin{document}
\begin{center}
{\bf ZERO-ONE GROTHENDIECK POLYNOMIALS}

\vskip 4mm
{\small  YIMING CHEN, NEIL J.Y. FAN, AND ZELIN YE}

\end{center}

\noindent{A{\scriptsize BSTRACT}.}
Fink,  M\'esz\'aros and  St.\,Dizier showed that the Schubert polynomial $\S_w(x)$ is zero-one if and only if $w$ avoids twelve permutation patterns. In this paper, we prove that the  Grothendieck polynomial $\G_w(x)$ is zero-one, i.e., with coefficients either 0 or $\pm$1, if and only if $w$ avoids six patterns. As applications, we show that the normalized double Schubert  polynomial $N(\S_w(x;y))$ is Lorentzian when $\G_w(x)$ is zero-one, partially confirming a conjecture of Huh,  Matherne, M\'esz\'aros and St.\,Dizier.   
 Moreover, we verify several conjectures on the support and coefficients of  Grothendieck polynomials posed by M\'{e}sz\'{a}ros, Setiabrata and St.\,Dizier for the case of zero-one Grothendieck polynomials.

\section{Introduction}\label{sec:introduction}

Grothendieck polynomials were introduced by Lascoux and Sch\"{u}tzenberger \cite{LascouxSchützenberger1982Gro} as polynomial representatives for the K-theory classes of the structure sheaves of Schubert varieties. The lowest degree components of Grothendieck polynomials are Schubert polynomials, also introduced by  Lascoux and Sch\"{u}tzenberger \cite{LascouxSchützenberger1982}, which are polynomial representatives for Schubert classes in flag varieties.  These two families of polynomials serve as the bridge between the geometry and topology of Schubert varieties and the associated algebra and combinatorics, and have been studied extensively from various aspects. In particular, there are many combinatorial models to generate these two polynomials, see, for example, \cite{BergeronBilley1993,Magyar1998,LamLeeShimozono2018,Anna2021,FominKirillov1994}.

Let $[n]=\{1,2,\ldots,n\}$ and $S_n$ be the symmetric group on $[n]$. For $w\in S_n$, 
denote $\overline{X}_w$ by the matrix Schubert variety of $w$, which can be viewed as a subvariety of the product of projective spaces. 
Knutson and Miller \cite{KnutsonMiller2005} showed that the Schubert polynomial $\S_w(x)$ equals the multidegree polynomial of $\overline{X}_w$, and the
Grothendieck polynomial $\G_w(x)$ coincides with the twisted $K$-polynomial of $\overline{X}_w$. 
As a consequence,  the Schubert polynomial $\S_w(x)$ is zero-one, i.e., has coefficients 0 or 1, if and only if  $\overline{X}_w$   is a multiplicity-free variety. For the definition and properties of multiplicity-free varieties, see Brion \cite{Brion} and the references therein. 

Fink, M\'{e}sz\'{a}ros and St.\,Dizier \cite{FinkMeszarosDizier2020} showed that the Schubert polynomial $\S_{w}(x)$ is zero-one if and only if $w$ avoids the twelve patterns $12543$, $13254$, $13524$, $13542$,
$21543$, $125364$, $125634$, $215364$, $215634$, $315264$, $315624$ and $315642$. Recall that for $w=w(1)\cdots w(n)\in S_n$ and $\sigma=\sigma(1)\cdots\sigma(k)\in S_k$ with $k\leq n$,  if there exist  $i_1<i_2<\cdots<i_k$ such that $w(i_1),w(i_2),\ldots,w(i_k)$ have the same relative order as $\sigma(1),\sigma(2),\ldots,\sigma(k)$, then we say that $w$ contains   $\sigma$. Otherwise, we say that  $w$ avoids $\sigma$. 
In this paper, we show that

\begin{thm}\label{thm:main theorem}
The Grothendieck polynomial $\G_{w}(x)$ is zero-one, i.e., has coefficients  $0$ or $\pm1$, if and only if $w$ avoids the patterns $1432$, $1342$, $13254$, $31524$, $12534$ and $21534$. 
\end{thm}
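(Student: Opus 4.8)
The plan is to prove both directions via the combinatorial structure of $\G_w(x)$ under pattern containment, mirroring the strategy of Fink–Mészáros–St.\,Dizier for Schubert polynomials but adapted to the K-theoretic setting. The key structural input is a compatibility between pattern containment in $w$ and coefficient growth in $\G_w(x)$: if $w$ contains a pattern $\sigma$, then (after an appropriate substitution/specialization of variables, or via a recursive divided-difference argument on the Demazure/Hecke operators $\pi_i, \overline{\pi}_i$) the Grothendieck polynomial $\G_\sigma(x)$ appears as a ``sub-object'' of $\G_w(x)$ in a way that forces a coefficient of absolute value $\ge 2$ in $\G_w(x)$ whenever one occurs in $\G_\sigma(x)$. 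So the \emph{necessity} direction reduces to checking, by direct computation, that each of the six listed patterns $1432,\,1342,\,13254,\,31524,\,12534,\,21534$ already has a Grothendieck polynomial with a coefficient $\pm 2$ (or larger in absolute value), and that these six are exactly the minimal such patterns — i.e., that there is no smaller ``bad'' pattern and that the set is an antichain. This last point requires exhaustive verification over $S_3$, $S_4$, $S_5$ (and enough of $S_6$ to rule out extra minimal bad patterns), which is a finite check.

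For the \emph{sufficiency} direction — the harder half — I would argue that if $w$ avoids all six patterns then $\G_w(x)$ is zero-one. The natural route is to combine two ingredients. First, since the lowest-degree part of $\G_w(x)$ is $\S_w(x)$, and since one checks that avoiding these six patterns implies $w$ avoids the twelve Fink–Mészáros–St.\,Dizier patterns (or at least implies $\S_w(x)$ is zero-one by a direct pattern-implication lemma), the Schubert part is already zero-one. Second, and this is the crux, one must control the higher-degree components. Here I would use a known combinatorial generating model for $\G_w(x)$ — for instance the set-valued semistandard Young tableaux / pipe dreams (RC-graphs with the K-theoretic ``excited'' moves) — and show that for $w$ avoiding the six patterns, the relevant tableaux or pipe dreams contribute with multiplicity at most one to each monomial, with a single, sign-consistent sign per degree. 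Equivalently, one shows the relevant poset of excited diagrams or the Hecke-word combinatorics has no ``collisions'' unless a forbidden pattern is present. Structurally I expect an induction on $n$ using the transition/recursion $\G_w = \overline{\pi}_i \G_{w s_i}$ (or the Lascoux transition formula for Grothendieck polynomials), tracking how coefficients can merge under $\overline{\pi}_i = \partial_i(1-x_{i+1})\cdot(\text{-})$ and isolating precisely the configurations — namely the six patterns — that create a coefficient jump.

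The main obstacle will be the sufficiency direction's control of cancellation and reinforcement of signs in higher degrees: unlike Schubert polynomials, Grothendieck polynomials are inhomogeneous and their coefficients alternate in sign by degree, so ``zero-one'' must be read as ``the coefficient in each fixed degree is $0$ or $(-1)^{d-\ell(w)}$,'' and one has to rule out \emph{both} a genuine $\pm 2$ and a partial cancellation that would nonetheless have arisen from an intermediate $\pm 2$. I anticipate that the cleanest handle is Lenart's or Lascoux's transition recursion together with a careful case analysis of which descents $i$ can, when applied, cause two distinct Hecke words (or two excited pipe dreams) to land on the same monomial; showing that such a merge forces one of the six patterns in $w$ is where the real work lies. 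A secondary, more routine obstacle is verifying the pattern-implication lemma (that the six patterns ``dominate'' the relevant configurations, in particular that $\G$-zero-one implies $\S$-zero-one at the level of avoidance), which should follow by an explicit finite check that containment of any of the twelve Schubert patterns forces containment of one of the six Grothendieck patterns, or else by showing directly that the extra permutations are handled by the higher-degree analysis.
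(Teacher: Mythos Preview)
Your proposal has genuine gaps in both directions.

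For necessity, you rely on an unproven hereditary principle: that if $w$ contains a pattern $\sigma$ with $\G_\sigma$ not zero-one, then $\G_w$ is not zero-one. No such specialization or ``sub-object'' statement is established for Grothendieck polynomials, and the hand-wave toward ``an appropriate substitution/specialization of variables, or via a recursive divided-difference argument'' is exactly the missing content. The paper does \emph{not} proceed this way. Instead, it works directly in $\BPD(w)$: for each of the six patterns it constructs, inside the bumpless pipe dreams of the ambient permutation $w$, two explicit pipe dreams $\mathcal{P}_1,\mathcal{P}_2$ (via specific droops and $K$-theoretic droops on the Rothe pipe dream) whose expansions in \eqref{eq:singG} share a common monomial. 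This bypasses the need for any pattern-to-coefficient inheritance lemma. Your reduction to ``compute $\G_\sigma$ for the six $\sigma$'' would only work once such a lemma is available, and supplying it is nontrivial.

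For sufficiency, the plan to ``track collisions'' of Hecke words or pipe dreams under $\overline{\pi}_i$ is too coarse: in the BPD model each $\mathcal{P}$ contributes a product $\prod_{B(\mathcal{P})} x_i \prod_{U(\mathcal{P})} (1-x_p)$, so a single pipe dream already spreads over many monomials, and you must rule out coefficient growth \emph{after} this expansion, not just rule out two pipe dreams with identical empty-box multisets. The paper's argument is entirely different and much more structural: under avoidance of the six patterns, the Rothe diagram $D(w)$ decomposes into a dominant northwest partition $\lambda$ together with a collection of tiny ``local structures'' (of just two shapes) sitting in pairwise disjoint rows and columns along the boundary of $\lambda$. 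This yields an explicit factorization $\widetilde{\G}_w(x)=x^{\lambda}\prod_k \widetilde{F}_k(x)\prod_l \widetilde{G}_l(x)$ into zero-one factors with disjoint variable sets, from which the zero-one property is immediate. None of the inductive $\overline{\pi}_i$ machinery you propose is used, and your outline gives no indication of this decomposition, which is the heart of the proof and also what drives the applications in Section~\ref{sec:Properties of zero-one Grothendieck polynomials}.
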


As the first application of Theorem \ref{thm:main theorem}, we show that if $\G_w(x)$ is zero-one, then the normalized double Schubert polynomial $N(\S_w(x;y))$ has the Lorentzian property introduced by  Br\"{a}nd\'{e}n and Huh \cite{PetterHuh2020}, partially confirming a conjecture raised by Huh, Matherne, M\'{e}sz\'{a}ros and St.\,Dizier \cite{HuhMatherneMészárosDizier2022}.

To describe the conjecture in \cite{HuhMatherneMészárosDizier2022}, we need to recall some notation.
For a subset $J\subseteq\mathbb{Z}^{n}$,  $J$ is called \emph{M-convex} if for any $i\in [n]$ and any entries $\alpha,\beta\in J$ with $\alpha_i>\beta_i$, there is some $j\in [n]$ such that
$$\alpha_j<\beta_j\text{ and }\alpha-\epsilon_i+\epsilon_j\in J,$$
where $\epsilon_i$ is the vector with $1$ at the $i$-th position and $0$ at other positions. Given a polynomial $f=\sum_{\alpha\in \mathbb{Z}^{n}}c_{\alpha}x^{\alpha}$,  the \emph{support} of $f$ is 
$$\supp(f):=\{\alpha|c_{\alpha}\neq 0\}\subseteq \mathbb{Z}^{n}.$$
Suppose that  $f$ is a degree $d$ homogeneous polynomial with nonnegative coefficients. We say that $f$ is a \emph{Lorentzian polynomial} if $\supp(f)$ is M-convex and the quadratic  form
	$$\frac{\partial}{\partial x_{i_1}}\cdots \frac{\partial}{\partial x_{i_{d-2}}}f$$
	has at most one positive eigenvalue for any $i_1,\ldots,i_{d-2}\in [n]$.  
 The \emph{normalization operator} $N$ acting on monomials  is defined as
$$N(x^{\alpha})=\frac{x_1^{\alpha_1}}{\alpha_1!}\cdots \frac{x_m^{\alpha_m}}{\alpha_m!},$$
and extends linearly to polynomials.

\begin{conj}[\mdseries{\cite[Conjecture 15]{HuhMatherneMészárosDizier2022}}]\label{conj:Lorentzian property1sch}
For any $w\in S_n$, the polynomial $N(\S_w(x))$ is Lorentzian. 
\end{conj}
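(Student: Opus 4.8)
We do not aim to prove Conjecture~\ref{conj:Lorentzian property1sch} for arbitrary $w$; rather, combining it with Theorem~\ref{thm:main theorem}, the plan is to prove the conjecture for every $w$ with $\G_w(x)$ zero-one, and in fact to establish the stronger statement that $N(\S_w(x;y))$ is Lorentzian for such $w$, where $\S_w(x;y)$ is the double Schubert polynomial in the sign convention for which it has nonnegative coefficients and $\S_w(x;y)|_{y=0}=\S_w(x)$. Since the Lorentzian property is preserved under the substitution $y_j\mapsto 0$ (a nonnegative linear change of variables) and this substitution commutes with $N$, the double statement recovers that $N(\S_w(x))=N(\S_w(x;y))|_{y=0}$ is Lorentzian, which is Conjecture~\ref{conj:Lorentzian property1sch} for all $w$ with $\G_w(x)$ zero-one. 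The single external ingredient is a theorem of Br\"and\'en and Huh \cite{PetterHuh2020}: if $M\subseteq\mathbb{Z}_{\ge 0}^{m}$ is M-convex, then $N\!\big(\sum_{\alpha\in M}x^{\alpha}\big)$ is Lorentzian (equivalently, the normalized basis generating polynomial of an integral polymatroid is Lorentzian). So it suffices to exhibit $\S_w(x)$, respectively $\S_w(x;y)$, as the indicator polynomial of an M-convex set.

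Consider first the single polynomial; this already yields Conjecture~\ref{conj:Lorentzian property1sch} in the zero-one regime. Writing $\G_w(x)=\sum_{k\ge 0}(-1)^{k}\G_w^{(k)}(x)$ with $\G_w^{(k)}$ homogeneous of degree $\ell(w)+k$ and with nonnegative coefficients (here $\ell(w):=\deg\S_w(x)$), the hypothesis that $\G_w(x)$ is zero-one means precisely that every $\G_w^{(k)}$ is zero-one; in particular $\S_w(x)=\G_w^{(0)}(x)=\sum_{\alpha\in\supp(\S_w)}x^{\alpha}$ is the indicator polynomial of its support. By the Monical--Tokcan--Yong theorem (proved in \cite{FinkMeszarosDizier2020}, see also the references therein), $\supp(\S_w)$ is the set of lattice points of a generalized permutahedron, hence M-convex, and the Br\"and\'en--Huh principle gives that $N(\S_w(x))$ is Lorentzian.

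For the stronger double statement I would establish two parallel facts. (i) \emph{Double zero-one}: when $w$ avoids the six patterns of Theorem~\ref{thm:main theorem}, not only $\G_w(x)$ but also $\S_w(x;y)$ has all coefficients in $\{0,1\}$. I expect this to follow by re-running the combinatorial analysis underlying Theorem~\ref{thm:main theorem} at the level of pipe dreams --- or bumpless pipe dreams, the $\BPD$ model --- while tracking the $x$- and $y$-weights simultaneously: a coefficient exceeding $1$ in $\S_w(x;y)$ should, after specializing the $y_j$ to $0$ or collapsing the $y$-bookkeeping, produce a coefficient exceeding $1$ in a Schubert (or Grothendieck) polynomial indexed by a pattern occurring in $w$, contradicting the pattern avoidance. (ii) \emph{M-convex double support}: $\supp(\S_w(x;y))\subseteq\mathbb{Z}_{\ge 0}^{m}\times\mathbb{Z}_{\ge 0}^{m}$ is M-convex; I would deduce this either from the generalized-permutahedron structure of Schubert supports via a ``doubling'' of variables, or directly from the (bumpless) pipe dream combinatorics. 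Granting (i) and (ii), $\S_w(x;y)$ is the indicator polynomial of an M-convex set, so $N(\S_w(x;y))$ is Lorentzian by the Br\"and\'en--Huh principle, and restriction $y\mapsto 0$ recovers the single-variable statement.

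The principal obstacle is twofold. For arbitrary $w$ the method breaks down entirely: $\S_w(x)$ typically has coefficients greater than $1$, hence is not the indicator polynomial of its (still M-convex) support, and proving $N(\S_w(x))$ Lorentzian would then require verifying the full system of discrete-Hessian / log-concavity inequalities along lines in $\supp(\S_w)$ --- which is precisely the content of the open Conjecture~\ref{conj:Lorentzian property1sch}. Within the zero-one regime the only genuinely new input is the double zero-one property (i); the M-convexity statement (ii) and the specialization $y\mapsto 0$ are routine once the relevant tools are in place.
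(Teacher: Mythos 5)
The statement you are addressing is a conjecture, and the paper does not claim to prove it in general; it only establishes the partial result that $N(\widetilde{\S}_w(x;y))$ is Lorentzian whenever $\G_w(x)$ is zero-one (Theorem~\ref{thm:conjectures of Lorentzian property2}), after noting in the introduction that the single-variable zero-one case already follows from Castillo--Cid-Ruiz--Mohammadi--Monta\~no. You correctly scoped your attempt to the zero-one regime. Your single-variable argument is sound and matches the cited route: when $\G_w(x)$ is zero-one, each homogeneous component $\G_w^{(k)}$ (in particular $\S_w(x)$) is zero-one, so $\S_w(x)$ is the indicator polynomial of its M-convex support and the Br\"and\'en--Huh normalization theorem applies.

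The double-variable half of your plan has a genuine gap at claim~(i). You assert that pattern avoidance forces $\S_w(x;y)$ to be zero-one, but the paper explicitly remarks the opposite: ``even if $\G_w(x)$ is zero-one, the double Schubert polynomial $\S_w(x;y)$ is not necessarily zero-one any longer.'' A concrete counterexample is $w=3412$: here $\G_{3412}(x)=x_1^2x_2^2$ is zero-one, yet
\[
\widetilde{\S}_{3412}(x;y)=(x_1+y_1)(x_1+y_2)(x_2+y_1)(x_2+y_2)
\]
has coefficient $2$ on the monomial $x_1x_2y_1y_2$. Consequently, the ``indicator polynomial of an M-convex set'' mechanism cannot be applied directly to $\widetilde{\S}_w(x;y)$, and your proposed reduction collapses exactly at step~(i).

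The paper's actual route to Theorem~\ref{thm:conjectures of Lorentzian property2} circumvents this by first establishing a factorization (Theorem~\ref{thm:factorization of Grothendieck polynomial when avoiding 6 patterns}, extended to the double setting) of $\widetilde{\S}_w(x;y)$ into a dominant factor $\prod_{(i,j)\in\lambda}(x_i+y_j)$, linear factors $\widetilde{G}_l(x;y)=x_r+\cdots+x_{r+l}+y_j+\cdots+y_{j+l}$, and factors $\widetilde{F}_k(x;y)=\sum_{t=0}^{k}h_{k-t}(x_p,x_{p+1})\,e_t(y_i,\ldots,y_{i+k})$, with pairwise disjoint variable sets. Each factor's normalization is then shown to be Lorentzian individually; for $\widetilde{F}_k$ the paper verifies M-convexity of the support via the Castillo--Cid-Ruiz--Mohammadi saturated-Newton-polytope theorem for double Schubert polynomials and carries out a direct Hessian computation whose characteristic polynomial is $\lambda(\lambda+\tfrac12)^{k-t}(\lambda^2-\tfrac{n-1}{2}\lambda-\tfrac12)$, with exactly one positive eigenvalue. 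The final step invokes the Br\"and\'en--Huh multiplicativity of normalized Lorentzian polynomials. Your approach and the paper's agree in invoking Br\"and\'en--Huh, but differ fundamentally in that the paper never requires the double Schubert polynomial to be zero-one; it only needs the individual factors to be Lorentzian after normalization, which sidesteps the failure of your claim~(i).
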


It was shown in \cite[Proposition 17]{HuhMatherneMészárosDizier2022} that if $w$ avoids the patterns 1432 and 1423, then $N(\S_w(x))$ is Lorentzian. Note that  Schubert polynomials $\S_w(x)$ reach their upper bounds if and only if $w$ avoids the patterns 1432 and 1423, see \cite{FanGuo2021}. 
 Castillo, Cid-Ruiz, Mohammadi, and Monta\~{n}o \cite{CastilloCidRuizMohammadiMontano2022} showed that if $X$ is a multiplicity-free variety, then the support of the $K$-polynomial of $X$ is a generalized polymatroid. In particular, this implies that if $\G_w(x)$ is zero-one, then the normalized homogeneous Grothendieck polynomial $N(\widehat{\G}_w(x,z))$ is Lorentzian, thus $N(\S_w(x))$ is Lorentzian.

It is expected in \cite{HuhMatherneMészárosDizier2022} that Conjecture \ref{conj:Lorentzian property1sch} also holds for double Schubert polynomials. We remark that even if $\G_w(x)$ is zero-one, the double Schubert polynomial $\S_w(x;y)$  is not necessarily zero-one any longer. 

\begin{thm}\label{thm:conjectures of Lorentzian property2}
If $\G_w(x)$ is zero-one, then $N(\widetilde{\S}_w(x;y))$ is Lorentzian, where  $\widetilde{\S}_w(x;y)$ is obtained from $\S_w(x;y)$ by changing all negative coefficients   to positive. 
\end{thm}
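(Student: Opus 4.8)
The plan is to feed the combinatorial characterization of Theorem~\ref{thm:main theorem} back into the geometry of matrix Schubert varieties. The first move is easy: the lowest-degree homogeneous component of $\G_w(x)$ is $\S_w(x)$, and homogeneous components of different degrees cannot cancel, so $\G_w(x)$ zero-one forces $\S_w(x)$ zero-one; equivalently $w$ avoids the twelve patterns of Fink, M\'{e}sz\'{a}ros and St.\,Dizier and $\overline{X}_w$ is multiplicity-free, so by Knutson and Miller together with Castillo, Cid-Ruiz, Mohammadi and Monta\~{n}o we already know that $N(\widehat{\G}_w(x,z))$, hence $N(\S_w(x))$, is Lorentzian. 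What this does not see is the column torus --- precisely the information that $\S_w(x;y)$ carries beyond $\S_w(x)$, and which cannot be recovered this way since $\S_w(x;y)$ itself need not be zero-one. So the real task is to use Theorem~\ref{thm:main theorem} to control $\widetilde{\S}_w(x;y)$ directly.

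The main step is to analyze the Rothe diagram $D(w)$ of a permutation $w$ avoiding $1432,1342,13254,31524,12534$ and $21534$. I would show that, after permuting rows and columns inside the relevant blocks, $D(w)$ decomposes as a disjoint union of diagrams of highly constrained permutations $u_1,\ldots,u_r$ --- vexillary, and dominant in the most degenerate cases --- with $w$ assembled from the $u_k$ by direct sums and by the diagram moves that preserve the zero-one property of $\G_w$. Translating this through the pipe-dream (or transition) formulas for double Schubert polynomials should yield a factorization
$$\widetilde{\S}_w(x;y)=\prod_{k=1}^{r}\widetilde{\S}_{u_k}(x_{I_k};y_{J_k}),$$
each factor being the all-nonnegative form of a flagged double Schur polynomial, and for the dominant pieces literally a product of nonnegative linear forms $x_i+y_j$ --- as already happens for $w=2143$, where $\widetilde{\S}_w(x;y)=(x_1+y_1)(x_1+x_2+x_3+y_1+y_2+y_3)$.

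Granting the factorization, the rest is formal: normalized flagged double Schur polynomials are Lorentzian (in the dominant case this is just the assertion that the normalization of a product of nonnegative linear forms is Lorentzian), and Lorentzian polynomials are closed under products and under the inert operations of introducing new variables and relabeling, so $N(\widetilde{\S}_w(x;y))=\prod_{k=1}^{r}N\!\big(\widetilde{\S}_{u_k}(x_{I_k};y_{J_k})\big)$ is Lorentzian, which is the assertion.

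The obstacle I expect is the structural step: pattern avoidance does not interact transparently with double Schubert polynomials, so establishing the factorization requires a concrete study of the Rothe diagrams singled out by Theorem~\ref{thm:main theorem} and of the behaviour of $\S_w(x;y)$ under the associated diagram moves, and where flagged double Schur factors genuinely appear one also owes an argument that their normalizations are Lorentzian (this is known for ordinary and skew Schur polynomials and should adapt). A second route trades this for a different obstacle: attach to $w$ a permutation $\widehat{w}$ in a larger symmetric group whose single Schubert polynomial specializes to $\widetilde{\S}_w(x;y)$ under a monomial-preserving substitution of variables --- interleaving the $x_i$ with fresh variables playing the role of $-y_j$, so that the tilde absorbs the negative substitution --- then check that avoidance of the six patterns by $w$ forces $\widehat{w}$ to avoid the twelve Schubert patterns, and apply Knutson and Miller together with Castillo, Cid-Ruiz, Mohammadi and Monta\~{n}o to $\overline{X}_{\widehat{w}}$; there the difficulty concentrates in the pattern-inheritance verification.
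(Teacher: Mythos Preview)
Your overall strategy --- factorize $\widetilde{\S}_w(x;y)$ using the structure of $D(w)$ forced by the six-pattern avoidance, then verify the Lorentzian property factor by factor --- is exactly the route the paper takes. The paper's structural analysis (its Proposition~3.3 and Theorem~3.5) is sharper than your sketch: beyond the dominant block $\prod_{(i,j)\in\lambda}(x_i+y_j)$, there are only \emph{two} shapes of local factor, one a genuine linear form $\widetilde{G}_l(x;y)=\sum x_r+\sum y_j$ (so trivially Lorentzian after normalization), and one nontrivial piece
\[
\widetilde{F}_k(x;y)=\sum_{t=0}^{k} h_{k-t}(x_p,x_{p+1})\,e_t(y_i,\ldots,y_{i+k}),
\]
which is the double Schubert polynomial of the Grassmannian permutation $s_2s_3\cdots s_{k+1}$ in two $x$-variables. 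So your ``flagged double Schur'' intuition lands correctly, but the class of factors that actually arises is much narrower than you allow for.

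The real gap in your proposal is the one you flag yourself: you assert that the normalized non-dominant factors are Lorentzian and say the argument ``should adapt'' from the Schur case. The paper does not appeal to any such general principle; it verifies $N(\widetilde{F}_k(x;y))$ is Lorentzian by hand. M-convexity of the support comes from the SNP result for double Schubert polynomials, and then the paper computes every relevant $(k-2)$-fold partial derivative explicitly, writes down the resulting Hessian, and factors its characteristic polynomial as $\lambda(\lambda+\tfrac12)^{k-t}(\lambda^2-\tfrac{n-1}{2}\lambda-\tfrac12)$ to read off a single positive eigenvalue. This concrete computation is the heart of the proof, and nothing in your outline substitutes for it. A second, smaller issue: you write $N(\widetilde{\S}_w)=\prod_k N(\widetilde{\S}_{u_k})$, but $N$ is not multiplicative unless the factors have disjoint variable sets; the paper instead invokes the Br\"and\'en--Huh fact that $N(fg)$ is Lorentzian whenever $N(f)$ and $N(g)$ are, which is what one actually needs once the dominant block shares variables with the other pieces. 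Your alternative route through a lift $\widehat{w}$ is not used in the paper and, as you note, would require a separate pattern-inheritance argument that is not obviously easier.
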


As the second application of Theorem \ref{thm:main theorem}, we show that the support and coefficients of a zero-one Grothendieck polynomial satisfy several conjectures proposed by M\'{e}sz\'{a}ros, Setiabrata and St.\,Dizier  \cite{MészárosSetiabrataDizier2022}.

Recall that the \emph{Newton polytope} of a polynomial $f$, denoted as 
$\Newton(f)$,  is
the convex hull of $\supp(f)$.  If
$$\Newton(f)\cap \mathbb{Z}^{n}=\supp(f),$$
then $f$ is said to have saturated Newton polytope, or SNP for short. For a vector $\alpha=(\alpha_1,\ldots,\alpha_n)$, let $|\alpha|=\alpha_1+\cdots+\alpha_n$. Define a partial order $\leq$ on the set $\supp(\G_w(x))$ by letting $\alpha\leq \beta$ if $\alpha_i\leq\beta_i$ for all $i$.

\begin{conj}[\mdseries{\cite[Conjecture 1.1]{MészárosSetiabrataDizier2022}}]\label{conj:support1}
	If $\alpha \in \supp(\G_w(x))$ and $|\alpha|< deg(\G_w(x))$, then there exists $\beta \in \supp(\G_w(x))$ with $\alpha < \beta$.
\end{conj}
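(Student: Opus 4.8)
The plan is to deduce the statement directly from the $M$-convexity of the support of the homogenized Grothendieck polynomial, which becomes available once $\mathfrak{G}_w(x)$ is zero-one; the pattern list of Theorem~\ref{thm:main theorem} is not needed here. The first step is to record what the zero-one hypothesis gives: since $\mathfrak{G}_w(x)$ is zero-one, so is its lowest-degree component $\mathfrak{S}_w(x)$, hence $\overline{X}_w$ is multiplicity-free, and therefore, by the theorem of Castillo, Cid-Ruiz, Mohammadi and Monta\~{n}o \cite{CastilloCidRuizMohammadiMontano2022} recalled above, the support $J:=\supp(\widehat{\mathfrak{G}}_w(x,z))$ is a generalized polymatroid; as $\widehat{\mathfrak{G}}_w$ is homogeneous, $J$ lies in a hyperplane of constant coordinate sum and is hence an $M$-convex subset of $\mathbb{Z}^{n+1}$ (equivalently, as recalled in the introduction, $N(\widehat{\mathfrak{G}}_w(x,z))$ is Lorentzian). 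The second step is to unwind the homogenization: writing $d:=\deg(\mathfrak{G}_w(x))$, the passage to $\widehat{\mathfrak{G}}_w$ sends $x^{\gamma}$ to $x^{\gamma}z^{\,d-|\gamma|}$, so, placing $z$ in coordinate $n+1$,
$$J=\big\{\,(\gamma,\,d-|\gamma|)\ :\ \gamma\in\supp(\mathfrak{G}_w(x))\,\big\}\subseteq\mathbb{Z}^{n+1}.$$

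Given $\alpha\in\supp(\mathfrak{G}_w(x))$ with $|\alpha|<d$, the idea is to exchange one unit of the homogenizing coordinate for one unit of some $x$-coordinate. I would fix $\beta^{\ast}\in\supp(\mathfrak{G}_w(x))$ with $|\beta^{\ast}|=d$ (possible, since $d$ is the top degree of $\mathfrak{G}_w(x)$) and set $A:=(\alpha,\,d-|\alpha|)\in J$, $B:=(\beta^{\ast},0)\in J$. Because $A_{n+1}=d-|\alpha|>0=B_{n+1}$, $M$-convexity of $J$ applied with $i=n+1$ yields an index $j$ with $A_j<B_j$ and $A-\epsilon_{n+1}+\epsilon_j\in J$; since $A_{n+1}>B_{n+1}$, this $j$ must lie in $[n]$. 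Then $\alpha_j<\beta^{\ast}_j$ and
$$A-\epsilon_{n+1}+\epsilon_j=(\alpha+\epsilon_j,\,d-|\alpha|-1)\in J,$$
which, by the description of $J$, says exactly that $\alpha+\epsilon_j\in\supp(\mathfrak{G}_w(x))$. Then $\beta:=\alpha+\epsilon_j$ satisfies $\beta\in\supp(\mathfrak{G}_w(x))$ and $\alpha<\beta$, as required; in fact one gets $|\beta|=|\alpha|+1$.

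The only genuine ingredient is the $M$-convexity of $J$, and this is exactly where the zero-one hypothesis enters, via multiplicity-freeness of $\overline{X}_w$; the remainder is bookkeeping with the extra coordinate, and there is no circularity, since \cite{CastilloCidRuizMohammadiMontano2022} is a geometric statement not referring to Conjecture~\ref{conj:support1}. A more self-contained alternative would be combinatorial: identify $\supp(\mathfrak{G}_w(x))$ with the set of row-weight vectors of pipe dreams of $w$ and show that any pipe dream with fewer than $d$ crossings admits one further crossing without changing its Demazure product, producing $\beta>\alpha$ in the support directly. The main obstacle on that route is to prove that every pipe dream of $w$ maximal under inclusion already has exactly $d$ crossings, which is plausible in the zero-one case but would rely on the explicit structure of pipe dreams for pattern-avoiding $w$ developed for Theorem~\ref{thm:main theorem}; the homogenization argument seems cleaner.
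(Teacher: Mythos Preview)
Your argument is correct, and it in fact establishes the stronger Conjecture~\ref{conj:support2} as well, since the $\beta$ you produce satisfies $|\beta|=|\alpha|+1$. The logical chain ``$\G_w$ zero-one $\Rightarrow$ $\S_w$ zero-one $\Rightarrow$ $\overline{X}_w$ multiplicity-free $\Rightarrow$ $\supp(\widehat{\G}_w(x,z))$ is $M$-convex'' is exactly what the paper records in the introduction as a consequence of \cite{CastilloCidRuizMohammadiMontano2022}, and your exchange step with the homogenizing coordinate is the standard way to pass from $M$-convexity of the homogenized support to the ``one-step-up'' property of the original support.

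This is, however, a genuinely different route from the paper's own proof. The paper does not invoke $M$-convexity at all for Theorem~\ref{thm:conjectures of support and coefficient}; instead it uses the explicit factorization of Theorem~\ref{thm:factorization of Grothendieck polynomial when avoiding 6 patterns}, writing $\widetilde{\G}_w(x)=x^{\lambda}\prod_k\widetilde{F}_k(x)\prod_l\widetilde{G}_l(x)$ with the factors supported on disjoint sets of variables, and then checks by hand that each factor $\widetilde{F}_k$, $\widetilde{G}_l$ has the one-step-up property. Your approach is shorter and more conceptual, and---as you note---does not use the six-pattern description at all; indeed it applies verbatim to the larger class where only $\S_w$ is zero-one. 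On the other hand, the paper's factorization is self-contained once Section~\ref{sec:proof of main theorem} is in place, and the same factorization is what drives the proof of Conjecture~\ref{conj:coefficient}, which your $M$-convexity argument does not touch. So the two approaches are complementary: yours gives Conjectures~\ref{conj:support1} and~\ref{conj:support2} almost for free from a black box, while the paper's gives all three conjectures from one combinatorial structure theorem.
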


A natural strengthening of Conjecture \ref{conj:support1} is

\begin{conj}[\mdseries{\cite[Conjecture 1.2]{MészárosSetiabrataDizier2022}}]\label{conj:support2}
If $\alpha \in \supp(\G_w(x))$ and $|\alpha|< deg(\G_w(x))$, then there exists $\beta \in \supp(\G_w(x))$ with $\alpha<\beta$ and $|\beta|=|\alpha|+1$.
\end{conj}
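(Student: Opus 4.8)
\medskip

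\noindent The plan is to verify this conjecture in the case that $\G_w(x)$ is zero-one, which by Theorem~\ref{thm:main theorem} is precisely the case where $w$ avoids $1432$, $1342$, $13254$, $31524$, $12534$ and $21534$; everything will be reduced to the M-convexity of the support of the homogenized Grothendieck polynomial. First recall that, by Knutson--Miller \cite{KnutsonMiller2005}, $\G_w(x)$ is the twisted $K$-polynomial of the matrix Schubert variety $\overline{X}_w$, so $\G_w(x)$ being zero-one is the same as $\overline{X}_w$ being multiplicity-free; as noted in the introduction (via \cite{CastilloCidRuizMohammadiMontano2022}), this makes $N(\widehat{\G}_w(x,z))$ Lorentzian, so that $\supp(\widehat{\G}_w(x,z))=\supp(N(\widehat{\G}_w(x,z)))$ is M-convex.

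The main step is then a short exchange argument. Write $D=\deg(\G_w(x))$, so that
$$\supp(\widehat{\G}_w(x,z))=\{(\alpha,D-|\alpha|):\alpha\in\supp(\G_w(x))\}\subseteq\mathbb{Z}_{\geq 0}^{n+1}.$$
Given $\alpha\in\supp(\G_w(x))$ with $|\alpha|<D$, I would choose any $\gamma\in\supp(\G_w(x))$ with $|\gamma|=D$ (such a $\gamma$ exists by the definition of $D$), and set $\widehat{\alpha}=(\alpha,D-|\alpha|)$ and $\widehat{\gamma}=(\gamma,0)$. Since $\widehat{\alpha}_{n+1}=D-|\alpha|>0=\widehat{\gamma}_{n+1}$, M-convexity applied at the coordinate $i=n+1$ yields an index $j\in[n+1]$ with $\widehat{\alpha}_j<\widehat{\gamma}_j$ and $\widehat{\alpha}-\epsilon_{n+1}+\epsilon_j\in\supp(\widehat{\G}_w(x,z))$. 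The inequality $\widehat{\alpha}_j<\widehat{\gamma}_j$ forces $j\neq n+1$, so $j\in[n]$, and the membership says exactly that $\beta:=\alpha+\epsilon_j\in\supp(\G_w(x))$; this $\beta$ satisfies $\alpha<\beta$ and $|\beta|=|\alpha|+1$, which is the assertion. Conjecture~\ref{conj:support1} then follows a fortiori, and iterating the construction exhibits a saturated chain in $\supp(\G_w(x))$ from $\alpha$ up to a monomial of top degree $D$.

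Granting M-convexity, the second step is essentially forced; the one subtlety to watch is that the exchange axiom only relates two \emph{given} support vectors, so one must feed in a top-degree monomial as the second of them in order to be sure the resulting move takes place among the $x$-variables rather than in the auxiliary variable $z$. The real obstacle therefore lies upstream, in the proof of Theorem~\ref{thm:main theorem}. If one wished to avoid invoking \cite{CastilloCidRuizMohammadiMontano2022}, the alternative I would pursue is to work with a combinatorial model for $\G_w(x)$ (pipe dreams or set-valued tableaux) and show that, when $w$ avoids the six patterns, any diagram of weight $\alpha$ with $|\alpha|<D$ admits the insertion of one further crossing that leaves the underlying permutation unchanged, producing a diagram of weight $\alpha+\epsilon_j$; the hard part of that route would be proving such an insertion always exists whenever $|\alpha|<D$, which is exactly where the pattern-avoidance hypothesis must be used.
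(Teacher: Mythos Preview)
Your argument is correct, and it takes a genuinely different route from the paper's. The paper proves Conjecture~\ref{conj:support2} for zero-one $\G_w$ by invoking the explicit factorization of Theorem~\ref{thm:factorization of Grothendieck polynomial when avoiding 6 patterns}: since $\widetilde{\G}_w(x)=x^{\lambda}\prod_k\widetilde{F}_k(x)\prod_l\widetilde{G}_l(x)$ with variable-disjoint factors, any $\alpha$ of submaximal degree must be submaximal in some factor, and one checks by hand that each $\widetilde{F}_k$ and $\widetilde{G}_l$ admits the required degree-one increment. Your approach instead imports the M-convexity of $\supp(\widehat{\G}_w(x,z))$ from \cite{CastilloCidRuizMohammadiMontano2022} and runs a clean exchange at the $z$-coordinate. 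One small inaccuracy: ``$\G_w$ zero-one is the same as $\overline{X}_w$ multiplicity-free'' is not an equivalence---multiplicity-freeness is equivalent to $\S_w$ being zero-one, a strictly weaker condition---but only the forward implication is needed, and it holds because $\S_w$ is the bottom layer of $\G_w$. In fact this observation shows your argument proves \emph{more} than the paper does here: the exchange step goes through whenever $\S_w$ is zero-one (the twelve-pattern class), not just when $\G_w$ is. The trade-off is that the paper's proof is self-contained from its own structural analysis of $\BPD(w)$, whereas yours treats \cite{CastilloCidRuizMohammadiMontano2022} as a black box; on the other hand, your argument is uniform and avoids any case analysis on the factor types.
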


\begin{conj}[\mdseries{\cite[Conjecture 1.6]{MészárosSetiabrataDizier2022}}]\label{conj:coefficient}
	Fix $w\in S_n$ and let $\G_{w}(x)=\sum_{\alpha\in \mathbb{Z}^n}C_{w\alpha}x^{\alpha}$. For any $\beta\in \supp(\G_w^{top}(x))$, i.e.,  $x^{\beta}$ is a monomial of the highest degree in $\G_{w}(x)$, we have
	$$\sum_{\alpha\leq\beta}C_{w\alpha}=1.$$
\end{conj}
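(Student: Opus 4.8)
The plan is to reduce the identity to a cancellation statement and then prove the cancellation by a sign-reversing involution. Recall first that the coefficients of any Grothendieck polynomial alternate in sign with total degree: $C_{w\alpha}=(-1)^{|\alpha|-\ell(w)}|C_{w\alpha}|$, where $\ell(w)$ is the length of $w$; this follows from the Fomin--Kirillov pipe-dream expansion of $\G_w$, in which the reduced pipe dreams have degree $\ell(w)$ and each additional box raises the degree by one while flipping the sign. Since $\G_w(x)$ is assumed zero-one, $|C_{w\alpha}|\in\{0,1\}$, so
\[
\sum_{\alpha\le\beta}C_{w\alpha}=\sum_{\substack{\alpha\in\supp(\G_w)\\ \alpha\le\beta}}(-1)^{|\alpha|-\ell(w)},
\]
and we must show this alternating count of the order ideal of $\supp(\G_w)$ below $\beta$ equals $1$. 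Two elementary remarks frame the problem. First, summing over all of $\supp(\G_w)$ gives $\G_w(1,\dots,1)=1$, a classical identity provable by a sign-reversing involution on K-theoretic pipe dreams; equivalently $\sum_{\alpha\le\beta}C_{w\alpha}=[x^\beta]\bigl(\G_w(x)/\prod_i(1-x_i)\bigr)$, which equals $1$ once $\beta$ dominates $\supp(\G_w)$. Second, $\alpha\le\beta$ with $|\alpha|=|\beta|=\deg\G_w$ forces $\alpha=\beta$, so the top layer of the ideal contributes exactly $C_{w\beta}=(-1)^{\deg\G_w-\ell(w)}$. Combining the two, it suffices to prove that $\sum_{\alpha\in\supp(\G_w),\,\alpha\not\le\beta}C_{w\alpha}=0$.

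To establish this vanishing I would build a fixed-point-free, sign-reversing involution $\iota$ on $S:=\{\alpha\in\supp(\G_w):\alpha\not\le\beta\}$ of the form $\alpha\mapsto\alpha\pm\epsilon_j$, with $\epsilon_j$ a standard basis vector and $j$ determined canonically from $\alpha$ and $\beta$, chosen so that the move never lowers, below $\beta_i$, a coordinate $i$ at which $\alpha$ exceeds $\beta$; then the partner again lies in $S$. The required unit-step neighbours inside $\supp(\G_w)$ are available from M-convexity of the homogenized support $\supp(\widehat\G_w(x,z))$: this set is M-convex because $N(\widehat\G_w(x,z))$ is Lorentzian (as established above via the result of Castillo, Cid-Ruiz, Mohammadi and Monta\~{n}o), and applying the exchange property to the homogenizing coordinate yields an upward neighbour $\alpha+\epsilon_j\in\supp(\G_w)$ whenever $|\alpha|<\deg\G_w$ (this is also the route to Conjectures \ref{conj:support1}--\ref{conj:support2} for zero-one $\G_w$) and a downward neighbour $\alpha-\epsilon_j\in\supp(\G_w)$ whenever $|\alpha|>\ell(w)$, recalling that the lowest component of $\G_w$ is $\S_w$. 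The genuine content is to pin down the rule for $j$ so that $\iota^2=\mathrm{id}$ globally; here Theorem \ref{thm:main theorem} enters, since the six excluded patterns are precisely the local configurations that would otherwise force some $\alpha\in S$ to admit no admissible partner.

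I expect this construction of $\iota$ — in particular, excluding spurious fixed points near the boundary where $\alpha\le\beta$ is tight, and checking that the toggling rule is compatible with the generalized-permutohedron structure of $\supp(\G_w)$ — to be the main obstacle. A possible alternative is induction along the K-theoretic divided-difference recursion $\G_{ws_i}=\pi_i\G_w$ for $\ell(ws_i)<\ell(w)$: granting that the explicit combinatorial model of $\G_w$ produced in proving Theorem \ref{thm:main theorem} is compatible with $\pi_i$, one tracks how a top exponent $\beta$ and the partial sum $\sum_{\alpha\le\beta}C_{w\alpha}$ transform under $\pi_i$, descending to a base case such as a dominant permutation, where $\G_w=\S_w$ is a single monomial and the identity is immediate; the obstruction there is again to verify that $\pi_i$ introduces no cancellation violating the zero-one form, nor any collapse of the relevant order ideal. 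I would carry out whichever of these two arguments dovetails best with the paper's proof of Theorem \ref{thm:main theorem}.
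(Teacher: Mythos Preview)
Your proposal is a sketch with the central step left open: you never actually construct the sign-reversing involution $\iota$ on $S=\{\alpha\in\supp(\G_w):\alpha\not\le\beta\}$, you only assert that M-convexity of the homogenized support supplies neighbours $\alpha\pm\epsilon_j$ and that the pattern-avoidance hypothesis should rule out obstructions. M-convexity gives you \emph{some} $j$ with $\alpha\pm\epsilon_j\in\supp(\G_w)$, but it does not give a canonical $j$, nor does it guarantee that the partner stays in $S$, nor that applying the rule twice returns you to $\alpha$. The claim that ``the six excluded patterns are precisely the local configurations that would otherwise force some $\alpha\in S$ to admit no admissible partner'' is unsupported speculation. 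Your alternative route via $\pi_i$ is likewise only a hope: zero-one Grothendieck polynomials are not closed under $\pi_i$ (the six-pattern class is not stable under multiplying by $s_i$), so the induction does not stay inside the zero-one world.

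The paper's proof is entirely different and, given Theorem~\ref{thm:factorization of Grothendieck polynomial when avoiding 6 patterns}, essentially a one-line computation. That theorem factors $\G_w(x)=x^\lambda\prod_k F_k(x)\prod_l G_l(x)$ into explicit polynomials with \emph{pairwise disjoint} variable sets. Disjointness forces any top exponent $\beta$ to split as $\lambda+\sum_k\beta_k+\sum_l\beta_l$, and the order-ideal sum $\sum_{\alpha\le\beta}C_{w\alpha}$ then factors as the product of the corresponding sums in each factor. For each $F_k$ the only exponents below a top $\beta_k=s\epsilon_p+(k+1-s)\epsilon_{p+1}$ are $\beta_k$, $\beta_k-\epsilon_p$, $\beta_k-\epsilon_{p+1}$, contributing $-1+1+1=1$; for each $G_l$ every exponent lies below the unique top $\beta_l=(1,\dots,1)$, so the sum is $G_l(1,\dots,1)=\sum_{s=1}^{l+1}(-1)^{s-1}\binom{l+1}{s}=1$. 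The product is $1$. No involution, no M-convexity, no induction is needed; the factorization already encodes everything.
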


M\'{e}sz\'{a}ros, Setiabrata and St.\,Dizier  \cite{MészárosSetiabrataDizier2022} showed that Conjectures \ref{conj:support1} and \ref{conj:coefficient} hold for fireworks permutations.
Recently, Chou and Yu \cite{ChouandYu} proved  Conjecture \ref{conj:support2} for inverse fireworks permutations.

\begin{thm}\label{thm:conjectures of support and coefficient}
Conjectures \ref{conj:support1},  \ref{conj:support2} and \ref{conj:coefficient} hold for zero-one Grothendieck polynomials.  
\end{thm}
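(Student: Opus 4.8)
The plan is to leverage Theorem~\ref{thm:main theorem}, which reduces the class of permutations under consideration to those avoiding the six patterns $1432$, $1342$, $13254$, $31524$, $12534$, $21534$. The first task is to understand the combinatorial structure of this pattern class well enough to describe $\supp(\G_w(x))$ explicitly. A natural starting point is the observation that avoiding $1432$ and $1342$ together means that for any ``$1$'' followed by a descent top (i.e.\ a value that begins a later descent) there cannot be two subsequent smaller values arranged to complete either pattern; concretely I expect this class to be closely related to, and possibly a subclass of, the fireworks or inverse-fireworks permutations already handled in \cite{MészárosSetiabrataDizier2022} and \cite{ChouandYu}, or at least to admit a similarly clean description of its Grothendieck polynomial via a restricted set of set-valued tableaux or pipe dreams. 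I would first check whether every $w$ in our class is both fireworks and inverse-fireworks; if so, Conjectures~\ref{conj:support1} and~\ref{conj:coefficient} follow immediately from \cite{MészárosSetiabrataDizier2022} and Conjecture~\ref{conj:support2} from \cite{ChouandYu}, and we are done. Since the authors state Theorem~\ref{thm:conjectures of support and coefficient} as a genuine new result, I anticipate the class is \emph{not} contained in those, so a direct combinatorial argument is needed.

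The core of the direct approach is to produce, for each $\alpha \in \supp(\G_w(x))$ with $|\alpha| < \deg(\G_w(x))$, an explicit ``lift'' $\beta > \alpha$ with $|\beta| = |\alpha| + 1$ (this proves Conjectures~\ref{conj:support1} and~\ref{conj:support2} simultaneously). I would realize $\alpha$ as the weight of a reduced/nonreduced pipe dream (or an equivalent set-valued object) $D$ for $w$, and then exhibit a local move adding exactly one box: since $|\alpha|$ is below top degree, $D$ is not a maximal pipe dream, so there is a position where the pipes can be ``pushed'' to occupy one additional cell while still representing $w$; the avoidance hypotheses are exactly what is needed to guarantee that such a move exists and keeps us inside $\supp$, and that the resulting weight is coordinatewise $\geq \alpha$. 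The main obstacle is precisely this step: identifying the correct local move and proving it is always available for the six-pattern-avoiding class — the pattern avoidance must be used to rule out ``stuck'' configurations where every way of adding a box changes the permutation or decreases some coordinate. I would organize this by induction on $n$ (or on the number of descents), peeling off the structure forced by the forbidden patterns — e.g.\ the positions after the first descent are highly constrained — so that $w$ decomposes as a simple shape (a ``hook-like'' or ``dominant-plus-small'' permutation) whose pipe dreams can be enumerated by hand.

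For Conjecture~\ref{conj:coefficient}, fix a top-degree exponent $\beta \in \supp(\G_w^{top}(x))$ and consider $\sum_{\alpha \le \beta} C_{w\alpha}$. The zero-one hypothesis means each $C_{w\alpha} \in \{0, \pm 1\}$, and the sign of $C_{w\alpha}$ is $(-1)^{|\alpha| - \deg \S_w}$ by the standard alternating-sign theorem for Grothendieck polynomials (Lenart, or Lam--Lee--Shimozono); so the sum is an alternating count of the $\alpha \le \beta$ lying in $\supp(\G_w)$, graded by degree. The claim $\sum_{\alpha \le \beta} C_{w\alpha} = 1$ then amounts to an Euler-characteristic / inclusion–exclusion identity: the poset of exponents $\alpha \le \beta$ in $\supp(\G_w)$, graded by $|\alpha|$, has reduced Euler characteristic giving $1$. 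I would prove this by setting up a sign-reversing involution on $\{\alpha \in \supp(\G_w) : \alpha \le \beta,\ \alpha \ne \alpha_{\min}\}$ — where $\alpha_{\min}$ is the unique minimal element below $\beta$ (the Schubert exponent realizing $\beta$'s lowest-degree shadow) — matching each surviving $\alpha$ in degree $d$ with one in degree $d\pm 1$; the existence of such an involution again rests on the restricted structure of the pattern class, and constructing it uniformly is the second main obstacle. Combining the two involutions/lifts with Theorem~\ref{thm:main theorem} yields all three conjectures for zero-one Grothendieck polynomials. $\qed$
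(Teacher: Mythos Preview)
Your proposal has a genuine gap: the two ``main obstacles'' you flag --- constructing the local lift on pipe dreams for Conjecture~\ref{conj:support2}, and building the sign-reversing involution for Conjecture~\ref{conj:coefficient} --- are never actually resolved, only named. What you have is a plan, not a proof, and the pattern-avoidance hypotheses are not yet doing any concrete work. (Your preliminary check is correct, incidentally: the class is contained neither in fireworks nor in inverse-fireworks permutations; e.g.\ $w=58326147$ avoids all six patterns but is neither.)

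The paper bypasses both obstacles entirely by means of a structural result you do not mention: Theorem~\ref{thm:factorization of Grothendieck polynomial when avoiding 6 patterns}, proved earlier in Section~\ref{sec:proof of main theorem}. That theorem shows that when $w$ avoids the six patterns, the Rothe diagram decomposes into a northwest partition $\lambda$ together with a collection of \emph{disjoint} local structures of two explicit types ($A$ and $B$), and consequently
\[
\widetilde{\G}_w(x)=x^{\lambda}\cdot\prod_k \widetilde{F}_k(x)\cdot\prod_l \widetilde{G}_l(x),
\]
where the factors involve \emph{pairwise disjoint} sets of variables and have the completely explicit forms \eqref{eq:F factor} and \eqref{eq:G factor}. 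With this factorization in hand, both conjectures reduce to one-line checks on each factor: for Conjecture~\ref{conj:support2}, any $\alpha$ below top degree must be below top degree in some factor, and in each of $\widetilde{F}_k$, $\widetilde{G}_l$ a lift by a single unit vector is immediate from the explicit formulas; for Conjecture~\ref{conj:coefficient}, disjointness of variables makes $\sum_{\alpha\le\beta}C_{w\alpha}$ itself factor, and for each $F_k$ the sum over $\{\alpha_k\le\beta_k\}$ is $1+1-1=1$ while for each $G_l$ it is $G_l(1,\dots,1)=\sum_{s=1}^{l+1}(-1)^{s-1}\binom{l+1}{s}=1$. The factorization is the missing idea; once you have it, no involutions or inductive pipe-dream arguments are needed.
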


This paper is organized as follows. In Section \ref{sec:Preliminary}, we recall basic definitions of Schubert and Grothendieck polynomials as well as the bumpless pipe dream model. In Section \ref{sec:proof of main theorem}, we give a proof of   Theorem \ref{thm:main theorem}  and obtain a factorization of zero-one Grothendieck polynomials. In Section \ref{sec:Properties of zero-one Grothendieck polynomials}, we prove Theorem \ref{thm:conjectures of Lorentzian property2} and Theorem \ref{thm:conjectures of support and coefficient}.

\vspace{.2cm} \noindent{\bf Acknowledgments.}
Part of  this work was carried out during the PACE program in the summer of 2023 at BICMR in Peking
University. We thank the program and the university for providing an inspiring
 atmosphere for research.
We would also like to thank Yibo Gao, Peter L. Guo and Rui Xiong for helpful discussions. 
This work was
supported by the National Science Foundation of China (12071320) and Sichuan Science and Technology Program (2023ZYD0012).

\section{Preliminaries}\label{sec:Preliminary}
In this section, we first recall definitions of Schubert polynomials and Grothendieck polynomials. Then we explain the bumpless pipe dream (BPD) model for Schubert and Grothendieck polynomials in \cite{LamLeeShimozono2018} \cite{Anna2021}.

For $w=w(1)\cdots w(n)\in S_n$, denote $\ell(w)$ the length of $w$. Let $w_0=n(n-1)\cdots 21$ be the longest permutation in $S_n$.  
Let $f$ be a polynomial in variables $x_1,\ldots,x_n,y_1,\ldots,y_n$, the divided difference operator $\partial_{i}$ on $f$ is defined as
	$$\partial_{i}f=\frac{f-s_if}{x_i-x_{i+1}},$$
    where $s_{i}f$ interchanges $x_i$ and $x_{i+1}$ in $f$. The isobaric divided difference  operator $\pi_{i}$ on $f$ is defined as
$$\pi_{i}f=\frac{(1-x_{i+1})f-(1-x_i)s_{i}f}{x_i-x_{i+1}}.$$

The \emph{double Schubert polynomial} $\S_{w}(x;y)$ is defined recursively as follows. For the longest permutation $w_0\in S_n$, set
$$\S_{w_0}(x;y)=\prod_{i+j\le n}(x_{i}-y_{j}).$$
If $w(i)<w(i+1)$ for some $i$, then let
$$\S_{w}(x;y)=\partial_{i}\S_{ws_i}(x;y),$$
where $ws_i$ is obtained from $w$ by interchanging $w(i)$ and $w(i+1)$. Setting $y=0$ in $\S_{w}(x;y)$, we obtain the single Schubert polynomial $\S_w(x).$

The \emph{double Grothendieck polynomial} $\G_{w}(x;y)$ for the longest permutation $w_0\in S_n$ is defined to be 
$$\G_{w_0}(x;y)=\prod_{i+j\le n}(x_{i}+y_{j}-x_{i}y_{j}),$$
and if $w(i)<w(i+1)$ for some $i$, then 
$$\G_{w}(x;y)=\pi_{i}\G_{ws_i}(x;y).$$
Letting $y=0$ in $\G_{w}(x;y)$, we obtain the single Grothendieck polynomial $\G_{w}(x).$

It is well known that
Schubert polynomials are homogeneous with degree $\ell(w)$. The lowest degree homogeneous component of $\G_{w}(x)$ is the Schubert polynomial $\S_{w}(x)$. For our purpose, it is more convenient to change all coefficients of $\G_{w}(x)$ to positive. Thus we define
$$\widetilde{\G}_{w}(x):=\sum_{k=0}^{d(w)-\ell(w)}(-1)^{k}\G_{w}^{(k)}(x),$$
where $\G_{w}^{(k)}(x)$ is the degree $k+\ell(w)$  component of $\G_{w}(x),$ and $d(w)$ is the degree of $\G_w(x)$, which is exactly the statistic ${\rm raj}(w)$ introduced by Pechenik, Speyer and Weigandt  \cite{PechenikSpeyerWeigandt2021}.  Moreover,  the \emph{homogeneous Grothendieck polynomial} of $w$ is defined by
$$\widehat{\G}_w(x,z):=\sum_{k=0}^{d(w)-\ell(w)}(-1)^{k}\G_{w}^{(k)}(x)z^{d(w)-\ell(w)-k},$$
where $z$ is a new variable.

In the following, we recall the bumpless pipe dream model for Schubert polynomials introduced by Lam, Lee and Shimozono \cite{LamLeeShimozono2018} and extended to Grothendieck polynomials by Weigandt \cite{Anna2021}. 

For a permutation $w\in S_n$, a \emph{bumpless pipe dream}  for $w$ is an $n\times n$ square grid with $n$ pipes labeled $1,2,\ldots,n$. Each pipe satisfies the following conditions:
\begin{enumerate}

\item[(1)] Pipe $i$ starts from the south boundary  in column $i$ (counted from left to right) and ends at the east boundary  in row $w^{-1}(i)$  (counted from top to bottom). Pipes can only go up or right.

\item[(2)]  Every box of the $n\times n$ grid looks like one of the six tiles shown in Figure \ref{fig:six tiles of BPD}. We call them from left to right: an SE elbow, an NW elbow, a horizontal tile, a vertical tile, a crossing and an empty box, respectively. In particular,  no two pipes can change their directions in one box. Such a box is called a bump \raisebox{-0.35\height}{
\begin{tikzpicture}[scale=0.6]
\draw (0,0) rectangle (1,1);
\draw[thick, blue, rounded corners=0.15cm] (0.5,0)--(0.5,0.5)--(1,0.5);
\draw[thick, blue, rounded corners=0.15cm] (0,0.5)--(0.5,0.5)--(0.5,1);
\end{tikzpicture}
}. 

\begin{figure}[ht]
	\centering
	\begin{tikzpicture}[scale=1]
	\draw (0,0) rectangle (1,1);
	\draw (2,0) rectangle (3,1);
	\draw (4,0) rectangle (5,1);
	\draw (6,0) rectangle (7,1);
	\draw (8,0) rectangle (9,1);
	\draw (10,0) rectangle (11,1);
	\draw[thick, blue, rounded corners=0.15cm] (0.5,0)--(0.5,0.5)--(1,0.5);
	\draw[thick, blue, rounded corners=0.15cm] (2,0.5)--(2.5,0.5)--(2.5,1);
	\draw[thick, blue] (4,0.5)--(5,0.5);
	\draw[thick, blue] (6.5,0)--(6.5,1);
	\draw[thick, blue] (8.5,0)--(8.5,1);
	\draw[thick, blue] (8,0.5)--(9,0.5);
	\end{tikzpicture}
	\caption{The six tiles of BPD}
	\label{fig:six tiles of BPD}
\end{figure}
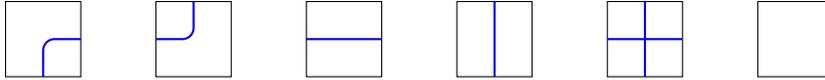

\item[(3)] No two pipes can overlap. If two pipes cross more than once, then   all the crossings after the first crossing (counted from left to right) are interpreted as bumps.

\end{enumerate}

Given a permutation $w\in S_n$, denote $\BPD(w)$ by the set of bumpless pipe dreams of $w$. 
For ease of drawing pictures, we will replace an SE elbow    \raisebox{-0.35\height}{
\begin{tikzpicture}[scale=0.6]
\draw (0,0) rectangle (1,1);
	\draw[thick, blue, rounded corners=0.15cm] (0.5,0)--(0.5,0.5)--(1,0.5);
\end{tikzpicture}
} by \raisebox{-0.35\height}{
\begin{tikzpicture}[scale=0.6]
\draw (0,0) rectangle (1,1);
\draw[thick, blue] (0.5,0)--(0.5,0.5)--(1,0.5);
\end{tikzpicture}
}, and an NW elbow \raisebox{-0.35\height}{
\begin{tikzpicture}[scale=0.6]
\draw (0,0) rectangle (1,1);
\draw[thick, blue, rounded corners=0.15cm] (0,0.5)--(0.5,0.5)--(0.5,1);
\end{tikzpicture}
} by \raisebox{-0.35\height}{
\begin{tikzpicture}[scale=0.6]
\draw (0,0) rectangle (1,1);
\draw[thick, blue] (0,0.5)--(0.5,0.5)--(0.5,1);
\end{tikzpicture}
}. Since no two pipes can bump in one box, there is no ambiguity for this replacement.
For example, let $w=1342$. Then  $\BPD(w)$ is displayed in Figure \ref{fig:BPD(1342)}, where the column labels indicate the row indices of right ends of the pipes. 
\begin{figure}[ht]
	\centering
	\begin{minipage}{0.25\textwidth}
			\begin{tikzpicture}[scale=0.7]
            \draw[thin] (0,0) rectangle (4,4);
			\draw[very thin, step=1, dashed] (0,0) grid (4,4); 
			\draw[thick, blue](0.5,0)--(0.5,3.5)--(4,3.5);
			\draw[thick, blue](1.5,0)--(1.5,0.5)--(4,0.5);
			\draw[thick, blue](2.5,0)--(2.5,2.5)--(4,2.5);
			\draw[thick, blue](3.5,0)--(3.5,1.5)--(4,1.5);
			\node[below] at (0.5,0) {$1$};
			\node[below] at (1.5,0) {$2$};
			\node[below] at (2.5,0) {$3$};
			\node[below] at (3.5,0) {$4$};
			\node[right] at (4,0.5) {\footnotesize{$w^{-1}(2)$}};
			\node[right] at (4,1.5) {\footnotesize{${w^{-1}(4)}$}};
			\node[right] at (4,2.5) {\footnotesize{$w^{-1}(3)$}};
			\node[right] at (4,3.5) {\footnotesize{$w^{-1}(1)$}};
			\end{tikzpicture}
		\end{minipage}
		\quad
		\begin{minipage}{0.25\textwidth}
			\begin{tikzpicture}[scale=0.7]
            \draw[thin] (0,0) rectangle (4,4);
			\draw[very thin, step=1, dashed] (0,0) grid (4,4); 
			\draw[thick, blue](0.5,0)--(0.5,2.5)--(1.5,2.5)--(1.5,3.5)--(4,3.5);
			\draw[thick, blue](1.5,0)--(1.5,0.5)--(4,0.5);
			\draw[thick, blue](2.5,0)--(2.5,2.5)--(4,2.5);
			\draw[thick, blue](3.5,0)--(3.5,1.5)--(4,1.5);
			\node[below] at (0.5,0) {$1$};
			\node[below] at (1.5,0) {$2$};
			\node[below] at (2.5,0) {$3$};
			\node[below] at (3.5,0) {$4$};
			\node[right] at (4,0.5) {\footnotesize{$w^{-1}(2)$}};
			\node[right] at (4,1.5) {\footnotesize{${w^{-1}(4)}$}};
			\node[right] at (4,2.5) {\footnotesize{$w^{-1}(3)$}};
			\node[right] at (4,3.5) {\footnotesize{$w^{-1}(1)$}};
			\end{tikzpicture}
		\end{minipage}
		\quad
		\begin{minipage}{0.25\textwidth}
			\begin{tikzpicture}[scale=0.7]
            \draw[thin] (0,0) rectangle (4,4);
			\draw[very thin, step=1, dashed] (0,0) grid (4,4); 
			\draw[thick, blue](0.5,0)--(0.5,1.5)--(1.5,1.5)--(1.5,3.5)--(4,3.5);
			\draw[thick, blue](1.5,0)--(1.5,0.5)--(4,0.5);
			\draw[thick, blue](2.5,0)--(2.5,2.5)--(4,2.5);
			\draw[thick, blue](3.5,0)--(3.5,1.5)--(4,1.5);
			\node[below] at (0.5,0) {$1$};
			\node[below] at (1.5,0) {$2$};
			\node[below] at (2.5,0) {$3$};
			\node[below] at (3.5,0) {$4$};
			\node[right] at (4,0.5) {\footnotesize{$w^{-1}(2)$}};
			\node[right] at (4,1.5) {\footnotesize{${w^{-1}(4)}$}};
			\node[right] at (4,2.5) {\footnotesize{$w^{-1}(3)$}};
			\node[right] at (4,3.5) {\footnotesize{$w^{-1}(1)$}};
			\end{tikzpicture}
		\end{minipage}
		\caption{$\BPD(1342)$}
		\label{fig:BPD(1342)}
\end{figure}
 
Now we  recall the droop operations in \cite{LamLeeShimozono2018} and \cite{Anna2021}, which can generate all $\BPD(w)$ from the Rothe pipe dream of $w$. 
Recall that the \emph{Rothe diagram} of $w$ is
$$D(w)=\{(i,j):w(i)>j,w^{-1}(j)>i\}.$$
For $i=1,\ldots,n$, put a dot at  $(i,w(i))$,  draw a horizontal line to the right and a vertical line to the bottom from $(i,w(i))$. Then we obtain $n$ pipes satisfying the requirements of bumpless pipe dreams. We call it the \emph{Rothe pipe dream} of $w$. By definition, the empty boxes of the Rothe pipe dream of $w$ consist of the Rothe diagram $D(w)$.   By abuse of notation, we denote $D(w)$ by the Rothe diagram as well as the Rothe pipe dream of $w.$

Suppose that there is a rectangle region with an SE elbow in the northwest corner, an empty box in the southeast corner, and  no SE or NW elbows inside. A \emph{droop} operation first changes the SE elbow to an empty box, and the empty box to an NW elbow, then draws a horizontal line to the left and a vertical line to the top at the new NW elbow until these lines reach the original pipes, as is shown in Figure \ref{fig:droop}. 

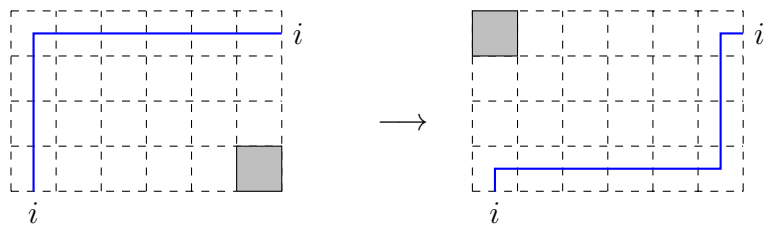
\begin{figure}[ht]
	\centering
	\begin{minipage}{0.3\textwidth}
		\begin{tikzpicture}[scale=0.6]
		\draw[dashed, step=1] (0,0) grid (6,4);
		\draw[fill=lightgray](5,0) rectangle (6,1);
		\draw[thick, blue](0.5,0)--(0.5,3.5)--(6,3.5);
		\node[below] at (0.5,0) {$i$};
		\node[right] at (6,3.5) {$i$};
		\end{tikzpicture}
	\end{minipage}
    $\longrightarrow$
    \hspace{.3cm}
	\begin{minipage}{0.3\textwidth}
		\begin{tikzpicture}[scale=0.6]
		\draw[dashed, step=1] (0,0) grid (6,4);
		\draw[fill=lightgray](0,3) rectangle (1,4);
		\draw[thick, blue](0.5,0)--(0.5,0.5)--(5.5,0.5)--(5.5,3.5)--(6,3.5);
		\node[below] at (0.5,0) {$i$};
		\node[right] at (6,3.5) {$i$};
		\end{tikzpicture}
	\end{minipage}
	\caption{A droop operation}
	\label{fig:droop}
\end{figure}

Suppose now there are two pipes labeled $i$ and $j$ intersecting once at the northeast corner or southwest corner of a rectangle region. Pipe $i$ has an SE elbow at the northwest corner and pipe $j$ has an NW elbow at the southeast corner. The \emph{$K$-theoretic droop} operation has two situations displayed in Figure \ref{fig:$K$-theoretic droop}, which makes
pipe $i$ and pipe $j$ intersect twice and the second crossings are interpreted as bumps.

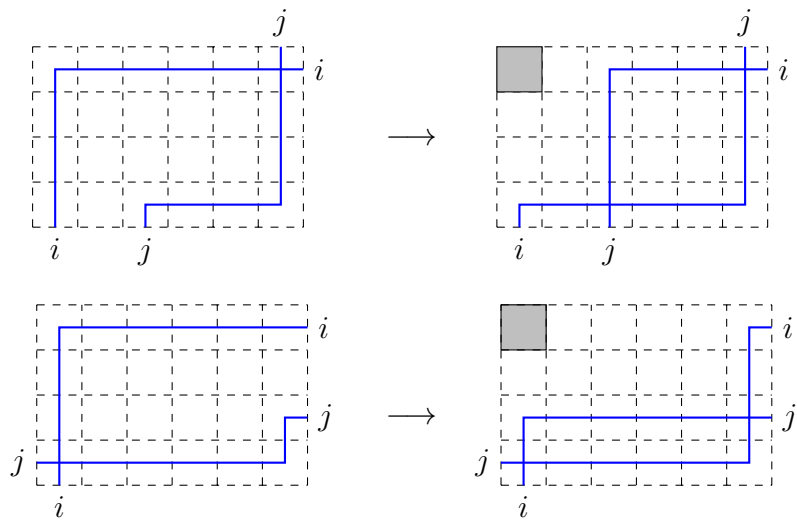
\begin{figure}[ht]
	\centering
	\begin{minipage}{0.3\textwidth}
		\begin{tikzpicture}[scale=0.6]
		\draw[dashed, step=1] (0,0) grid (6,4);
		\draw[thick, blue](0.5,0)--(0.5,3.5)--(6,3.5);
		\draw[thick, blue](2.5,0)--(2.5,0.5)--(5.5,0.5)--(5.5,4);
		\node[below] at (0.5,0) {$i$};
		\node[right] at (6,3.5) {$i$};
		\node[below] at (2.5,0) {$j$};
		\node[above] at (5.5,4) {$j$};
		\node[left] at (0,0.5) {$\  $};
		\end{tikzpicture}
	\end{minipage}
    \hspace{.1cm}
	$\longrightarrow$
    \hspace{.1cm}
	\begin{minipage}{0.3\textwidth}
		\begin{tikzpicture}[scale=0.6]
		\draw[dashed, step=1] (0,0) grid (6,4);
        \draw[fill=lightgray](0,3) rectangle (1,4);
		\draw[thick, blue](0.5,0)--(0.5,0.5)--(5.5,0.5)--(5.5,4);
		\draw[thick, blue](2.5,0)--(2.5,3.5)--(6,3.5);
		\node[below] at (0.5,0) {$i$};
		\node[right] at (6,3.5) {$i$};
		\node[below] at (2.5,0) {$j$};
		\node[above] at (5.5,4) {$j$};	
		\node[left] at (0,0.5) {$\  $};
		\end{tikzpicture}		
	\end{minipage}
 
\vspace{.35cm}

    \begin{minipage}{0.3\textwidth}
		\begin{tikzpicture}[scale=0.6]
		\draw[dashed, step=1] (0,0) grid (6,4);
		\draw[thick, blue](0.5,0)--(0.5,3.5)--(6,3.5);
		\draw[thick, blue](0,0.5)--(5.5,0.5)--(5.5,1.5)--(6,1.5);
		\node[below] at (0.5,0) {$i$};
		\node[right] at (6,3.5) {$i$};
		\node[left] at (0,0.5) {$j$};
		\node[right] at (6,1.5) {$j$};
		\end{tikzpicture}
	\end{minipage}
    \hspace{.1cm}
	$\longrightarrow$
    \hspace{.1cm}
	\begin{minipage}{0.3\textwidth}
		\begin{tikzpicture}[scale=0.6]
        \draw[fill=lightgray](0,3) rectangle (1,4);
		\draw[dashed, step=1] (0,0) grid (6,4);
		\draw[thick, blue](0.5,0)--(0.5,1.5)--(6,1.5);
		\draw[thick, blue](0,0.5)--(5.5,0.5)--(5.5,3.5)--(6,3.5);
		\node[below] at (0.5,0) {$i$};
		\node[right] at (6,3.5) {$i$};
		\node[left] at (0,0.5) {$j$};
		\node[right] at (6,1.5) {$j$};
		\end{tikzpicture}		
	\end{minipage}
	\caption{$K$-theoretic droop operations}
	\label{fig:$K$-theoretic droop}
\end{figure}

For both droops and $K$-theoretic droops, if there exist SE or NW elbows inside the involved rectangle, then it is not allowed to apply the droop operation, we call the droop is \emph{impermissible} in this situation. Otherwise, we say the droop is \emph{permissible}. 

\begin{prop}[\mdseries{\cite[Proposition 4.5]{Anna2021}}] \label{prop:Anna BPD can be obtained by droops}
	Any $\mathcal{P}\in \BPD(w)$ can be obtained from its Rothe pipe dream by a sequence of droops and $K$-theoretic droops.  
\end{prop}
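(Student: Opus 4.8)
The plan is to run the droop and $K$-theoretic droop moves in reverse. Call the inverse of a droop an \emph{undroop} and the inverse of a $K$-theoretic droop a \emph{$K$-undroop}: reading Figures~\ref{fig:droop} and~\ref{fig:$K$-theoretic droop} from right to left, each is again a local move taking a bumpless pipe dream of $w$ to another one (it keeps track of which pipe is which and of all pipe endpoints). It therefore suffices to prove that from any $\mathcal{P}\in\BPD(w)$ a sequence of undroops and $K$-undroops reaches the Rothe pipe dream $D(w)$; reversing that sequence — each undroop becoming a droop and each $K$-undroop a $K$-theoretic droop — then proves the proposition.

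Here is the strategy. First observe that $D(w)$ is the unique element of $\BPD(w)$ having no NW elbow: a pipe that uses no NW elbow is an $\mathsf{L}$-shape, a vertical segment followed by a horizontal segment, and since pipe $i$ must start in column $i$ and end in row $w^{-1}(i)$ it is then forced to go straight up to row $w^{-1}(i)$ and then straight right, which is exactly $D(w)$; in particular such a diagram has no bump either. Now proceed in two phases. \emph{Phase 1:} while $\mathcal{P}$ has a bump, apply a $K$-undroop; since a $K$-theoretic droop creates exactly one new bump (compare the two sides of Figure~\ref{fig:$K$-theoretic droop}), each $K$-undroop strictly decreases the number of bumps, so after finitely many steps $\mathcal{P}$ is reduced. \emph{Phase 2:} while the now reduced $\mathcal{P}$ has an NW elbow, apply an ordinary undroop; a droop leaves the number of empty boxes unchanged and increases the number of SE elbows by exactly one (compare the two sides of Figure~\ref{fig:droop}), so each undroop keeps $\mathcal{P}$ reduced and strictly decreases the number of SE elbows, and the process ends at a reduced diagram with no NW elbow, namely $D(w)$. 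What must be supplied is the two existence statements: \emph{(i)} any bumpless pipe dream of $w$ with a bump admits a permissible $K$-undroop, and \emph{(ii)} any reduced bumpless pipe dream of $w$ with an NW elbow admits a permissible ordinary undroop. Statement~(ii) is the theorem of Lam, Lee and Shimozono \cite{LamLeeShimozono2018}, so only~(i) truly has to be proved here.

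For both~(i) and~(ii) I would argue by choosing an extremal feature. For~(ii): take an NW elbow $(r,c)$ that is extremal (say last in some reading order), trace its pipe westward along row $r$ to the SE elbow at which that horizontal run begins, say $(r,c')$ with $c'<c$, and northward along column $c$ to the SE elbow at which that vertical run ends, say $(r',c)$ with $r'<r$; then extremality forces the rectangle with corners $(r',c')$ and $(r,c)$ to have an empty box at $(r',c')$ and no SE or NW elbow strictly inside, which is precisely the right-hand configuration of Figure~\ref{fig:droop}. For~(i): take an extremal bump, trace the two pipes meeting there, and show the neighbouring tiles are forced into the right-hand configuration of one of the two pictures of Figure~\ref{fig:$K$-theoretic droop}. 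The main obstacle is exactly this case analysis — verifying that around an extremally chosen bump (resp.\ NW elbow) the tiles really are forced, with nothing obstructing the relevant rectangle, so that the move is permissible. Once~(i) and~(ii) are in hand, the two phases terminate at $D(w)$ as described, and reversing the whole chain of moves exhibits $\mathcal{P}$ as obtained from $D(w)$ by a sequence of droops and $K$-theoretic droops.
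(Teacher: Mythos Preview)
The paper does not prove this proposition: it is quoted from Weigandt \cite[Proposition~4.5]{Anna2021} and used as a black box, so there is no in-paper argument to compare your attempt against.

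That said, your strategy---invert the moves, strip off bumps with $K$-undroops, then strip off NW elbows with ordinary undroops, terminating at $D(w)$---is the natural one and matches how such results are established. Your termination statistics are correct: a $K$-theoretic droop creates exactly one new bump, an ordinary droop creates exactly one new NW elbow, and $D(w)$ is indeed the unique element of $\BPD(w)$ with no NW elbow. You also correctly isolate the real content as the existence claims~(i) and~(ii), point to \cite{LamLeeShimozono2018} for~(ii), and sketch an extremal-choice argument for~(i). As you yourself flag, the case analysis around an extremal bump in~(i) is the part that still has to be written out in full; until that is done, what you have is a correct and well-organised proof \emph{plan} rather than a complete proof.
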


Weigandt \cite{Anna2021} extended the formula for $\S_w(x;y)$ given by Lam, Lee and Shimozono \cite{LamLeeShimozono2018} to  double Grothendieck polynomials.

\begin{thm}[\mdseries{\cite{Anna2021}}]
For $w\in S_n$, we have 
\begin{align}
    \G_{w}(x;y)&=\sum_{\mathcal{P}\in \BPD(w)}\left(\prod_{(i,j)\in B(\mathcal{P})}(x_i+y_{j}-x_{i}y_{j}) \right) \left(\prod_{(p,q)\in U(\mathcal{P})}(1-x_{p}-y_{q}+x_{p}y_{q}) \right),\label{eq:doubleG}\\
    \G_{w}(x)&=\sum_{\mathcal{P}\in \BPD(w)}\left(\prod_{(i,j)\in B(\mathcal{P})}x_i \right) \left(\prod_{(p,q)\in U(\mathcal{P})}(1-x_{p}) \right),\label{eq:singG}
\end{align}
where
\begin{align*}
 B(\mathcal{P})&:=\{(i,j):\mathcal{P}\text{ has an empty box in row }i\text{ and column }j\},\\
U(\mathcal{P})&:=\{(p,q):\mathcal{P}\text{ has an NW elbow in row }p\text{ and column }q\}.   
\end{align*} 

\end{thm}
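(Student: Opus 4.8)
The plan is to establish the more general identity \eqref{eq:doubleG}; then \eqref{eq:singG} follows at once by setting $y=0$, since under this specialization $x_i+y_j-x_iy_j\mapsto x_i$ and $1-x_p-y_q+x_py_q\mapsto 1-x_p$. Write $\mathcal{G}_w(x;y)$ for the right-hand side of \eqref{eq:doubleG}, a well-defined polynomial because $\BPD(w)$ is finite. By the recursive definition of $\G_w(x;y)$, it is enough to show that $\mathcal{G}_w(x;y)$ obeys the same recursion: $\mathcal{G}_{w_0}(x;y)=\prod_{i+j\le n}(x_i+y_j-x_iy_j)$, and $\mathcal{G}_w(x;y)=\pi_i\mathcal{G}_{ws_i}(x;y)$ whenever $w(i)<w(i+1)$. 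Granting both, a downward induction on $\ell(w)$ concludes: the case $\ell(w)=\binom n2$ is $w=w_0$; and if $\ell(w)<\binom n2$ there is an ascent position $i$ with $w(i)<w(i+1)$, so $\ell(ws_i)=\ell(w)+1$, the inductive hypothesis gives $\mathcal{G}_{ws_i}(x;y)=\G_{ws_i}(x;y)$, and then $\mathcal{G}_w(x;y)=\pi_i\mathcal{G}_{ws_i}(x;y)=\pi_i\G_{ws_i}(x;y)=\G_w(x;y)$.

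For the base case I would verify directly that $\BPD(w_0)$ consists of a single bumpless pipe dream, its Rothe pipe dream $D(w_0)$. Here $D(w_0)=\{(i,j):i+j\le n\}$ is the staircase shape; every SE elbow of $D(w_0)$ sits on the antidiagonal $\{(i,j):i+j=n+1\}$, every empty box lies strictly to the northwest of it, and the Rothe pipe dream of $w_0$ has no crossings. Hence no rectangle supporting a droop or a $K$-theoretic droop exists, so by Proposition~\ref{prop:Anna BPD can be obtained by droops} no BPD other than $D(w_0)$ occurs. Since $B(D(w_0))=\{(i,j):i+j\le n\}$ and $U(D(w_0))=\emptyset$, this yields $\mathcal{G}_{w_0}(x;y)=\prod_{i+j\le n}(x_i+y_j-x_iy_j)$, matching the definition of $\G_{w_0}(x;y)$.

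The crux is the recursion $\mathcal{G}_w(x;y)=\pi_i\mathcal{G}_{ws_i}(x;y)$ for $w(i)<w(i+1)$. Two observations organize the work. First, $\pi_i=\partial_i\circ(1-x_{i+1})$, so the claim becomes a statement about $\partial_i$ applied to $(1-x_{i+1})\mathcal{G}_{ws_i}(x;y)$, and the two tile weights $(1-x_r)(1-y_c)$ and $1-(1-x_r)(1-y_c)$ are fixed by $s_i$ unless the row index $r$ is $i$ or $i+1$, in which case $s_i$ merely exchanges the roles of rows $i$ and $i+1$. Second, since $(ws_i)^{-1}=s_iw^{-1}$, passing from $w$ to $ws_i$ interchanges the output rows of pipes $w(i)$ and $w(i+1)$, and because $w(i)<w(i+1)$ these two pipes are forced into a distinguished local interaction inside the two-row strip formed by rows $i$ and $i+1$ of every $\mathcal{P}\in\BPD(ws_i)$. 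I would analyze this strip column by column, show it is one of a short explicit list of local pictures, and use it to reorganize $\BPD(ws_i)$ into blocks, each attached to a single BPD of $w$ obtained by a local ``undroop'' within rows $i,i+1$ (a few blocks being responsible for the $K$-theoretic higher-degree terms of $\mathcal{G}_w$, in line with the idempotency $\pi_i^2=\pi_i$), the point being that the contribution of each block to $\partial_i\bigl((1-x_{i+1})\mathcal{G}_{ws_i}(x;y)\bigr)$ is exactly the weight of its associated BPD of $w$.

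This last bookkeeping — proving the local moves in rows $i,i+1$ are well-defined on all of $\BPD(ws_i)$, that they surject onto $\BPD(w)$, and that each block's weight transforms precisely according to the $\pi_i$-rule — is the main obstacle; it is where the ``no two pipes bump in one box'' condition and the convention that repeated crossings become bumps are used most delicately. As an alternative to this self-contained route one could deduce \eqref{eq:doubleG} from the Schubert case of Lam, Lee and Shimozono together with a $K$-theoretic lift of their droop recursion, or transport the Fomin--Kirillov pipe-dream formula for $\G_w(x;y)$ across the weight-preserving bijection between pipe dreams and bumpless pipe dreams; in every case the essential content is the same two-row recursion, which I would carry out via $\pi_i$ as above.
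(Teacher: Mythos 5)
The paper you are working from does not prove this theorem at all---it is imported verbatim from Weigandt \cite{Anna2021} as a known black-box---so there is no in-paper argument for your attempt to be measured against. Judged on its own terms, your reduction steps are sound: setting $y=0$ does collapse \eqref{eq:doubleG} to \eqref{eq:singG}; the identity $\pi_i=\partial_i\circ(1-x_{i+1})$ is correct; and the base case is verified correctly, since every SE elbow in the Rothe pipe dream of $w_0$ lies on the antidiagonal $i+j=n+1$ with all empty boxes strictly to its northwest, so no droop or $K$-theoretic droop is permissible and $\BPD(w_0)$ is a singleton with $U=\emptyset$.

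However, the heart of the theorem is the inductive step $\mathcal{G}_w(x;y)=\pi_i\mathcal{G}_{ws_i}(x;y)$, and there you have only stated a program, not carried it out. The entire content of Weigandt's result is precisely the ``bookkeeping'' you defer: classifying the two-row local configurations in rows $i,i+1$ of a BPD of $ws_i$, exhibiting a well-defined local move that groups $\BPD(ws_i)$ into blocks indexed by $\BPD(w)$ (handling the $K$-theoretic double-crossings-as-bumps convention without double counting), and checking that each block's weight is taken by $\pi_i$ to the weight of its target BPD of $w$. None of this is done; you write that you ``would analyze'', ``would show'', ``would carry out''. Without that analysis the argument is circular---you have reduced the theorem to a combinatorial lemma that is of the same difficulty as the theorem itself. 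The alternatives you name at the end (lifting the Lam--Lee--Shimozono droop recursion, or a weight-preserving bijection to Fomin--Kirillov pipe dreams) are likewise named rather than executed, and the latter in particular is a nontrivial bijection that needs to be constructed and shown weight-preserving. As a result the proposal identifies the right structural skeleton but omits the one step that constitutes the actual proof.
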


For the running example $w=1342$, the right two BPDs in Figure \ref{fig:BPD(1342)} are obtained by drooping the pipe labeled $1$ to the empty boxes at $(2,2)$ and $(3,2)$, respectively. Thus the double Grothendieck polynomial for $w=1342$ is
\begin{align*}
\G_{1342}(x;y)&=(x_2+y_2-x_2y_2)(x_3+y_2-x_3y_2)\\
    &\quad+(x_1+y_1-x_1y_1)(1-x_2-y_2+x_2y_2)(x_3+y_2-x_3y_2)\\
    &\quad+(x_1+y_1-x_1y_1)(x_2+y_1-x_2y_1)(1-x_3-y_2+x_3y_2).
\end{align*}

\section{Proof of the  Theorem \ref{thm:main theorem} }\label{sec:proof of main theorem}

In this section, we give a proof of Theorem \ref{thm:main theorem}. For the necessity, we construct monomials that appear more than once when $w$ contains one of the patterns $1432$, $1342$, $13254$, $31524$, $12534$ and $21534$. For the sufficiency, we show that $\widetilde{\G}_{w}(x)$ admits a factorization when $w$ avoids the above 6 patterns. This factorization of $\widetilde{\G}_{w}(x)$ implies the sufficiency of Theorem \ref{thm:main theorem} immediately and plays an important role in Section \ref{sec:Properties of zero-one Grothendieck polynomials}.

We first prove the necessity of Theorem \ref{thm:main theorem}.
 
\begin{thm}\label{thm:Necessity}
	If $\widetilde{\G}_{w}(x)$ is zero-one, then $w$  avoids the patterns $1432$, $1342$, $13254$, $31524$, $12534$ and $21534$. 
\end{thm}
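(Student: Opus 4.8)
\emph{Overall strategy.} The plan is to prove the contrapositive in its natural form: if $w$ contains one of the six patterns $1432$, $1342$, $13254$, $31524$, $12534$, $21534$, then $\widetilde{\G}_w(x)$ has some coefficient of absolute value at least $2$. The key tool is the bumpless pipe dream expansion \eqref{eq:singG}: for each pattern I will exhibit two distinct BPDs $\mathcal{P}_1,\mathcal{P}_2\in\BPD(w)$ that contribute the \emph{same} monomial $x^\alpha$ with the \emph{same sign} (i.e.\ $|B(\mathcal{P}_1)|\equiv|B(\mathcal{P}_2)|\pmod 2$, so that no cancellation can occur in $\widetilde{\G}_w$), forcing $|C_{w\alpha}|\ge 2$. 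The standard reduction here is that pattern containment lets one restrict attention to the pattern itself: one should first verify a monotonicity/inflation lemma saying that if $v$ is obtained from $u$ by a pattern-preserving insertion (equivalently $u$ is a pattern of $v$), then a repeated monomial in $\widetilde{\G}_u$ lifts to a repeated monomial in $\widetilde{\G}_v$. This is where I expect to lean on the BPD model: inserting rows/columns for the extra entries of $w$ beyond the occurrence of the pattern, and tracking how the two witnessing BPDs for the pattern extend to BPDs for $w$ while keeping their $B$-sets (and hence monomials and sign parities) in lockstep.

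\emph{Base cases.} For each of the six patterns $\sigma$ I would simply compute $\widetilde{\G}_\sigma(x)$ and point to an explicit monomial appearing with coefficient $\pm 2$ (or larger in absolute value). Concretely: for $\sigma=1432$ and $\sigma=1342$ (length-$3$ permutations in $S_4$ once we account for $\ell$), draw $\BPD(\sigma)$ via Proposition \ref{prop:Anna BPD can be obtained by droops} starting from the Rothe pipe dream and applying droops and $K$-theoretic droops; I expect two BPDs sharing a monomial such as $x_1 x_2 x_3$ or $x_1^2 x_2$ to appear, with matching parity of $|B(\mathcal{P})|$. For the three length-type patterns in $S_5$ ($13254$, $31524$, $12534$, $21534$), the same computation is slightly larger but still finite and mechanical; the point is to record, for each, one offending monomial together with the two BPDs realizing it. These computations are routine (and checkable by computer), so in the write-up I would just display the BPD pairs and the resulting coefficient.

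\emph{Main obstacle.} The genuine difficulty is \emph{not} the base cases but the inflation step: ensuring that a coefficient $\ge 2$ in $\widetilde{\G}_\sigma$ is inherited by $\widetilde{\G}_w$ whenever $w$ contains $\sigma$. Because $\widetilde{\G}$ has alternating signs built in, the naive worry is that the two lifted BPDs could land at different degrees and partially cancel against other terms. To handle this cleanly I would fix an occurrence $i_1<\dots<i_k$ of $\sigma$ in $w$, and argue that one can choose the two witnessing BPDs of $\sigma$ and extend them ``minimally'' — routing the pipes for the entries of $w$ outside the occurrence in the unique way dictated by the Rothe pipe dream, i.e.\ performing the same droop sequence inside the sub-rectangle indexed by $(i_1,\dots,i_k)$ and leaving the rest untouched. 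Then the difference $B(\widetilde{\mathcal P}_1)\setminus B(\widetilde{\mathcal P}_2)$ equals $B(\mathcal P_1)\setminus B(\mathcal P_2)$ up to the fixed relabeling of rows/columns, so the two lifts contribute the same monomial with the same sign parity, and the rest of the expansion cannot kill a coefficient that is $\pm 2$ coming from these two terms plus nonnegative contributions of equal sign. Making ``route the outside pipes in the unique way'' precise in the BPD language — and checking it is always legal (the relevant droops stay permissible) — is the technical heart of the argument, and it is the step I would spend the most care on; everything else is bookkeeping over six finite cases.
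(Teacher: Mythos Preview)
Your high-level plan---prove the contrapositive by exhibiting, for each forbidden pattern, two BPDs contributing a common monomial to $\widetilde{\G}_w$---matches the paper. But your execution diverges at a crucial point, and the divergence is where the real content lies.

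The paper does \emph{not} reduce to the six base permutations and then lift via an inflation lemma. Instead it works directly in $\BPD(w)$: given an occurrence $i_1<\cdots<i_k$ of a pattern in $w$, it identifies specific empty boxes of $D(w)$ (e.g.\ $(w^{-1}(i_3),i_2)$ and $(w^{-1}(i_4),i_2)$ for $1432$) and droops the relevant pipes of $w$ to them. The whole difficulty is that these droops need not be permissible, because pipes of $w$ \emph{outside} the pattern occurrence can place SE elbows inside the droop rectangle. The paper handles this obstruction case by case: sometimes by replacing a pipe $i_1$ with a closer pipe $i_1'$ and iterating (Claim~1 in Case~1), sometimes by showing that an obstructing pipe forces $w$ to contain a different pattern already treated (the reductions in Cases~2--4 back to $1342$, $1432$, $31524$, $12534$).

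Your proposed inflation step---``route the outside pipes by the Rothe pipe dream and perform the same droops inside the sub-rectangle''---is precisely the step that fails without this analysis. The outside pipes do not stay out of the way: they can pass through the sub-grid as SE elbows, rendering your lifted droops impermissible. You flag this as ``the technical heart'' but give no mechanism for dealing with it; the paper's proof shows the obstructions are genuine and require the iterative/reductive arguments above. So as written your proposal has a real gap at exactly the point you identified as hardest, and the fix is essentially to abandon the inflation viewpoint and argue directly in $w$ as the paper does. (One small separate point: for $13254$ the paper simply cites that $\S_w$ is already not zero-one, which is cleaner than building BPDs.)
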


\begin{proof}
We aim to show that if $w$ contains one of the 6 patterns, then there exists a monomial in $\widetilde{\G}_{w}(x)$ with its coefficient larger than $1$.

By \cite{FinkMeszarosDizier2020}, if $w$ contains the pattern $13254$, then its Schubert polynomial $\S_{w}(x)$  is already not zero-one. Thus $\G_{w}(x)$ is not zero-one. 
For the rest of 5 patterns, we are going to construct two different pipe dreams $\mathcal{P}_1,\mathcal{P}_2\in \BPD(w)$ such that $\mathcal{P}_1$ and $\mathcal{P}_2$ generate the same monomial by \eqref{eq:singG}. We divide the discussions into 4 cases.

\vspace{.2cm}
\noindent\textbf{Case 1.}  $w$ contains the pattern $1432$ or 1342. 
If $w$ contains the pattern $1432$, then there exist   $a<b<c<d$  such that $w(a)w(b)w(c)w(d)$ has the same relative order with 1432.  Let 
\[i_1=w(a),\ i_2=w(d),\ i_3=w(c),\ i_4=w(b).\] 
Then $i_1<i_2<i_3<i_4$ and
\[w^{-1}(i_1)<w^{-1}(i_4)<w^{-1}(i_3)<w^{-1}(i_2).\] 
 Similarly, if $w$ contains the pattern $1342$, then  there exsist $i_1<i_2<i_3<i_4$ such that
$w^{-1}(i_1)<w^{-1}(i_3)<w^{-1}(i_4)<w^{-1}(i_2),$ see the illustration in Figure \ref{fig:1432 and 1342}. 
By definition,   $(w^{-1}(i_3),i_2)$, $(w^{-1}(i_4),i_2)\in D(w)$.

\begin{figure}[ht]
	\centering
	\begin{minipage}{0.35\textwidth}
		\begin{tikzpicture}[scale=0.4]
		\draw[very thin, step=1] (0,0) rectangle (8,8); 
		\draw[thick, blue](0.5,0)--(0.5,7)--(8,7);
		\draw[thick, blue](2.5,0)--(2.5,1.5)--(8,1.5);
		\draw[thick, blue](5.5,0)--(5.5,2.5)--(8,2.5);
		\draw[thick, blue](7,0)--(7,5.5)--(8,5.5);
		\draw[fill=lightgray](2,2) rectangle (3,3);
		\draw[fill=lightgray](5,5) rectangle (6,6);
		\draw[fill=lightgray](2,5) rectangle (3,6);
		\node[below] at (0.5,0) {$i_1$};
		\node[below] at (2.5,0) {$i_2$};
		\node[below] at (5.5,0) {$i_3$};
		\node[below] at (7,0) {$i_4$};
		\node[right] at (8,7) {\footnotesize{$w^{-1}(i_1)$}};
		\node[right] at (8,5.5) {\footnotesize{$w^{-1}(i_4)$}};
		\node[right] at (8,2.5) {\footnotesize{$w^{-1}(i_3)$}};
		\node[right] at (8,1.5) {\footnotesize{$w^{-1}(i_2)$}};
		\end{tikzpicture}
	\end{minipage}
	\qquad
	\begin{minipage}{0.35\textwidth}
		\begin{tikzpicture}[scale=0.4]
		\draw[very thin, step=1] (0,0) rectangle (8,8); 
		\draw[thick, blue](0.5,0)--(0.5,7)--(8,7);
		\draw[thick, blue](2.5,0)--(2.5,1.5)--(8,1.5);
		\draw[thick, blue](5.5,0)--(5.5,5.5)--(8,5.5);
		\draw[thick, blue](7,0)--(7,2.5)--(8,2.5);
		\draw[fill=lightgray](2,5) rectangle (3,6);
		\draw[fill=lightgray](2,2) rectangle (3,3);
		\node[below] at (0.5,0) {$i_1$};
		\node[below] at (2.5,0) {$i_2$};
		\node[below] at (5.5,0) {$i_3$};
		\node[below] at (7,0) {$i_4$};
		\node[right] at (8,7) {\footnotesize$w^{-1}(i_1)$};
		\node[right] at (8,5.5) {\footnotesize$w^{-1}(i_3)$};
		\node[right] at (8,2.5) {\footnotesize$w^{-1}(i_4)$};
		\node[right] at (8,1.5) {\footnotesize$w^{-1}(i_2)$};
		\end{tikzpicture} 	
	\end{minipage}
	\caption{The patterns $1432$ and $1342$}
	\label{fig:1432 and 1342}
\end{figure}
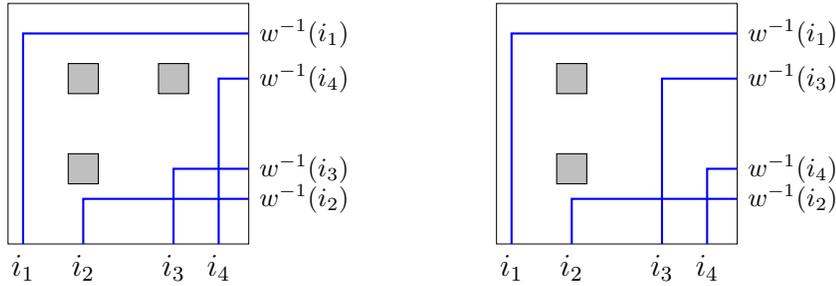

\vspace{.2cm}
\noindent\textbf{Claim 1:} If $w$ contains the pattern $1432$, then in the southeast region of pipe $i_1$, after a sequence of droops, there exist two empty boxes in the same column such that we can droop pipe $i_1$ to these two empty boxes. 

\begin{proof}[Proof of Claim 1]
If we can directly droop  pipe $i_1$ to the two empty boxes located at $(w^{-1}(i_3),i_2)$, $(w^{-1}(i_4),i_2)\in D(w)$ then there is nothing  to prove. Otherwise, it is not permissible to apply a droop in the region $[w^{-1}(i_1),w^{-1}(i_3)]\times [i_1,i_2]$, which implies there exist SE elbows in this region (Notice that there is no NW elbow in Rothe pipe dream).  There are two situations.
	
	\textcircled{1} If there is an SE elbow labeled  $i_1'$ in the region $[w^{-1}(i_1),w^{-1}(i_4)]\times[i_1,i_2]$, view  pipe $i_1'$  as  pipe $i_1$. 
	
\textcircled{2} If there is an SE elbow in the region $[w^{-1}(i_4),w^{-1}(i_3)]\times[i_1,i_2]$, then there exists a pipe $j$ such that $i_1<j<i_2$ and $w^{-1}(i_4)<w^{-1}(j)<w^{-1}(i_3)$, as shown in Figure \ref{fig:When droop is not permissible in pattern 1432}. Obviously, $(w^{-1}(i_4),j)\in D(w)$ is an empty box. If we can droop  pipe $j$ to the empty box $(w^{-1}(i_3),i_2)$, then we obtain two empty boxes $(w^{-1}(i_4),j)$ and $(w^{-1}(j),j)$ in the same column as desired. 

\begin{figure}[ht]
	\centering
    \begin{minipage}{0.3\textwidth}
    \begin{tikzpicture}[scale=0.4]
	\draw[very thin, step=1] (0,0) rectangle (8,8); 
	\draw[thick, blue](0.5,0)--(0.5,7)--(8,7);
	\draw[thick, blue](3.5,0)--(3.5,1.5)--(8,1.5);
	\draw[thick, blue](5.5,0)--(5.5,2.5)--(8,2.5);
	\draw[thick, red](1.5,0)--(1.5,4)--(8,4);
	\draw[thick, blue](7,0)--(7,5.5)--(8,5.5);
    \draw[draw=red,dashed] (1,4.5) rectangle (4,2);
	\draw[fill=lightgray](3,2) rectangle (4,3);
	\draw[fill=lightgray](5,5) rectangle (6,6);
	\draw[fill=lightgray](3,5) rectangle (4,6);
    \draw[fill=lightgray](1,5) rectangle (2,6);
	\node[below] at (0.5,0) {$i_1$};
	\node[below] at (3.5,0) {$i_2$};
	\node[below] at (5.5,0) {$i_3$};
	\node[below] at (7,0) {$i_4$};
	\node[below] at (1.5,0) {$j$};
	\node[right] at (8,7) {\footnotesize$w^{-1}(i_1)$};
	\node[right] at (8,5.5) {\footnotesize$w^{-1}(i_4)$};
	\node[right] at (8,2.5) {\footnotesize$w^{-1}(i_3)$};
	\node[right] at (8,1.5) {\footnotesize$w^{-1}(i_2)$};
	\node[right] at (8,4) {\footnotesize$w^{-1}(j)$};
	\end{tikzpicture}
    \end{minipage}
$\longrightarrow$
\quad
\begin{minipage}{0.35\textwidth}
    \begin{tikzpicture}[scale=0.4]
	\draw[very thin, step=1] (0,0) rectangle (8,8); 
	\draw[thick, blue](0.5,0)--(0.5,7)--(8,7);
	\draw[thick, blue](3.5,0)--(3.5,1.5)--(8,1.5);
	\draw[thick, blue](5.5,0)--(5.5,2.5)--(8,2.5);

	\draw[thick, blue](7,0)--(7,5.5)--(8,5.5);
    \draw[fill=lightgray](1,3.5) rectangle (2,4.5);
	\draw[fill=lightgray](5,5) rectangle (6,6);
	\draw[fill=lightgray](3,5) rectangle (4,6);
    \draw[fill=lightgray](1,5) rectangle (2,6);
    \draw[thick, red](1.5,0)--(1.5,2.5)--(3.5,2.5)--(3.5,4)--(8,4);
	\node[below] at (0.5,0) {$i_1$};
	\node[below] at (3.5,0) {$i_2$};
	\node[below] at (5.5,0) {$i_3$};
	\node[below] at (7,0) {$i_4$};
	\node[below] at (1.5,0) {$j$};
	\node[right] at (8,7) {\footnotesize{$w^{-1}(i_1)$}};
	\node[right] at (8,5.5) {\footnotesize$w^{-1}(i_4)$};
	\node[right] at (8,2.5) {\footnotesize$w^{-1}(i_3)$};
	\node[right] at (8,1.5) {\footnotesize$w^{-1}(i_2)$};
	\node[right] at (8,4) {\footnotesize$w^{-1}(j)$};
	\end{tikzpicture}
\end{minipage}
	\caption{An impermissible droop in the pattern $1432$}
	\label{fig:When droop is not permissible in pattern 1432}
\end{figure}
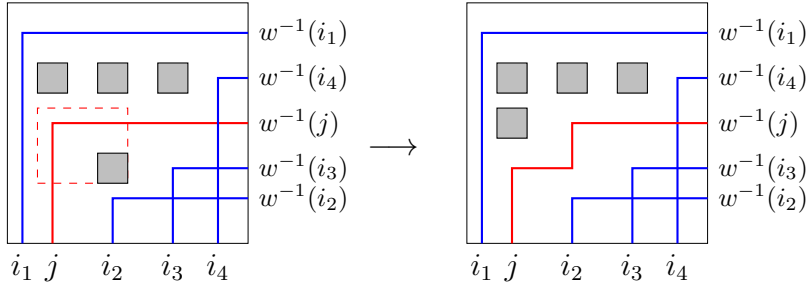

Otherwise, it is not permissible to droop pipe $j$  to the empty box $(w^{-1}(i_3),i_2)$. This implies that there exists a  pipe $j'$  such that $j<j'<i_2$ and $ w^{-1}(j)<w^{-1}(j')<w^{-1}(i_3)$. We first droop   pipe $j'$ to $(w^{-1}(i_3),i_2)$ and then droop  pipe  $j$ to $(w^{-1}(j'),j')$. Continue this process, we can turn $(w^{-1}(j),j)$ into an empty box eventually.  
\end{proof}

By  \textbf{Claim 1}, suppose that $(p_1,q),(p_2,q)$ are the leftmost and topmost two empty boxes located southeast of pipe $i_1$. We obtain $\mathcal{P}_1$ and $\mathcal{P}_2$ by drooping pipe $i_1$  to $(p_1,q)$ and $(p_2,q)$ respectively, see Figure \ref{fig:1432 case droop respectively}. 

\begin{figure}[ht]
	\centering
	\begin{minipage}{0.18\textwidth}
		\begin{tikzpicture}[scale=0.45]
		\draw[very thin, step=1] (0,0) rectangle (4,6); 
		\draw[fill=lightgray](2,1) rectangle (3,2);
		\draw[fill=lightgray](2,3) rectangle (3,4);
		\draw[thick, blue](0.5,0)--(0.5,5.5)--(4,5.5);
        \draw[draw=red,dashed](0,6) rectangle (3,3);
		\node[below] at (0.5,0) {$i_1$};
		\node[below] at (2.5,0) {$q$};
		\node[right] at (4,5.5) {\footnotesize$w^{-1}(i_1)$};
		\node[right] at (4,3.5) {$p_1$};
		\node[right] at (4,1.5) {$p_2$};
		\end{tikzpicture}
	\end{minipage}
$\rightarrow$
 	\begin{minipage}{0.18\textwidth}
		\begin{tikzpicture}[scale=0.45]
		\draw[very thin, step=1] (0,0) rectangle (4,6); 
		\draw[fill=lightgray](2,1) rectangle (3,2);
		\draw[fill=lightgray](0,5) rectangle (1,6);
		\draw[thick, blue](0.5,0)--(0.5,3.5)--(2.5,3.5)--(2.5,5.5)--(4,5.5);
		\node[below] at (0.5,0) {$i_1$};
		\node[below] at (2.5,0) {$q$};
		\node[right] at (4,5.5) {\footnotesize$w^{-1}(i_1)$};
		\node[right] at (4,3.5) {$p_1$};
		\node[right] at (4,1.5) {$p_2$};
		\end{tikzpicture}
	\end{minipage}
 \qquad
 \qquad
	\begin{minipage}{0.18\textwidth}
		\begin{tikzpicture}[scale=0.45]
		\draw[very thin, step=1] (0,0) rectangle (4,6); 
		\draw[fill=lightgray](2,1) rectangle (3,2);
		\draw[fill=lightgray](2,3) rectangle (3,4);
		\draw[thick, blue](0.5,0)--(0.5,5.5)--(4,5.5);
        \draw[draw=red,dashed](0,6) rectangle (3,1);
		\node[below] at (0.5,0) {$i_1$};
		\node[below] at (2.5,0) {$q$};
		\node[right] at (4,5.5) {\footnotesize$w^{-1}(i_1)$};
		\node[right] at (4,3.5) {$p_1$};
		\node[right] at (4,1.5) {$p_2$};
		\end{tikzpicture}
	\end{minipage}
$\rightarrow$
  	\begin{minipage}{0.18\textwidth}
		\begin{tikzpicture}[scale=0.45]
		\draw[very thin, step=1] (0,0) rectangle (4,6); 
		\draw[fill=lightgray](0,5) rectangle (1,6);
  	\draw[fill=lightgray](0,3) rectangle (1,4);
		\draw[thick, blue](0.5,0)--(0.5,1.5)--(2.5,1.5)--(2.5,5.5)--(4,5.5);
		\node[below] at (0.5,0) {$i_1$};
		\node[below] at (2.5,0) {$q$};
		\node[right] at (4,5.5) {\footnotesize$w^{-1}(i_1)$};
		\node[right] at (4,3.5) {$p_1$};
		\node[right] at (4,1.5) {$p_2$};
		\end{tikzpicture}
	\end{minipage}
	\caption{The construction of $\mathcal{P}_1$ and $\mathcal{P}_2$ in Case 1}
	\label{fig:1432 case droop respectively}
\end{figure}
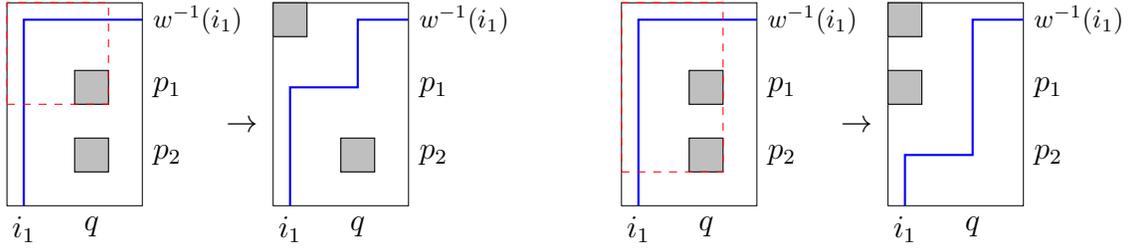

It is easy to see that the empty boxes and NW elbows of $\mathcal{P}_1$ and $\mathcal{P}_2$ only differ in row $p_1$ and row $p_2$. Denote $F(x)$ the polynomial generated by the other rows. Then $\mathcal{P}_1$ and $\mathcal{P}_2$
generate polynomials $(1+x_{p_1})x_{p_2}F(x)$ and $(1+x_{p_2})x_{p_1}F(x),$ respectively.
Thus the polynomial $x_{p_1}x_{p_2}F(x)$ appears at least twice in $\widetilde{\G}_{w}(x)$. 

Notice that the proof for the case when $w$ contains 1432  depends only on the two empty boxes in column $i_2$, thus the proof for the $1342$ case is exactly the same. 

\vspace{.2cm}
\noindent\textbf{Case 2.}   $w$ contains the pattern $31524$. Then there exist $i_1<i_2<i_3<i_4<i_5$ such that $w^{-1}(i_3)<w^{-1}(i_1)<w^{-1}(i_5)<w^{-1}(i_2)<w^{-1}(i_4)$, see      Figure \ref{fig:pattern 31524} for an illustration. Clearly, $(w^{-1}(i_5),i_2)$, $(w^{-1}(i_5),i_4)\in D(w)$. 

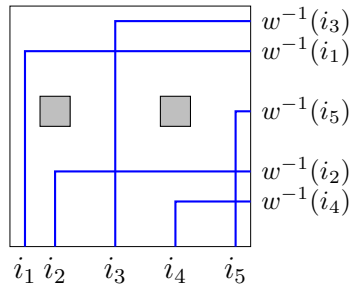
\begin{figure}[ht]
	\centering
	\begin{tikzpicture}[scale=0.4]
	\draw[very thin, step=1] (0,0) rectangle (8,8); 
	\draw[thick, blue](0.5,0)--(0.5,6.5)--(8,6.5);
	\draw[thick, blue](1.5,0)--(1.5,2.5)--(8,2.5);
	\draw[thick, blue](3.5,0)--(3.5,7.5)--(8,7.5);
	\draw[thick, blue](5.5,0)--(5.5,1.5)--(8,1.5);
	\draw[thick, blue](7.5,0)--(7.5,4.5)--(8,4.5);
	\draw[fill=lightgray](1,4) rectangle (2,5);
	\draw[fill=lightgray](5,4) rectangle (6,5);
	\node[below] at (0.5,0) {$i_1$};
	\node[below] at (1.5,0) {$i_2$};
	\node[below] at (3.5,0) {$i_3$};
	\node[below] at (5.5,0) {$i_4$};
	\node[below] at (7.5,0) {$i_5$};	
	\node[right] at (8,7.5) {\footnotesize$w^{-1}(i_3)$};
	\node[right] at (8,6.5) {\footnotesize$w^{-1}(i_1)$};
	\node[right] at (8,4.5) {\footnotesize$w^{-1}(i_5)$};
	\node[right] at (8,2.5) {\footnotesize$w^{-1}(i_2)$};
	\node[right] at (8,1.5) {\footnotesize$w^{-1}(i_4)$};
	\end{tikzpicture} 	
	\caption{The pattern $31524$}
	\label{fig:pattern 31524}
\end{figure}

If we can droop pipe $i_1$ to $(w^{-1}(i_5),i_2)$ and    pipe $i_3$ to $(w^{-1}(i_5),i_4)$ simultaneously, then  the obtained bumpless pipe dream $\mathcal{P}$   generates a polynomial of the form $$F(x)(1+x_{w^{-1}(i_5)})^2,$$ which has a monomial with coefficient larger than $1$. So it suffices to  show that the droops for pipe $i_1$  to $(w^{-1}(i_5),i_2)$ and    pipe $i_3$ to $(w^{-1}(i_5),i_4)$  are both permissible.

\textcircled{1} If there is an SE elbow labeled as $i_1'$ in the region $[w^{-1}(i_1),w^{-1}(i_5)]\times[i_1,i_2]$, view  pipe $i_1'$ as  pipe $i_1$. 

\textcircled{2} If there is an SE elbow labeled as $i_3'$ in the region $[w^{-1}(i_3),w^{-1}(i_1)]\times[i_3,i_4]$, view  pipe $i_3'$  as  pipe $i_3$. 

\textcircled{3} Suppose that there is an SE elbow labeled as $j$ in the region $[w^{-1}(i_1),w^{-1}(i_5)]\times[i_3,i_4]$, as is shown in Figure \ref{fig:Droop not permissible in pattern 31524}. Then we have
$w^{-1}(i_1)<w^{-1}(j)<w^{-1}(i_5)<w^{-1}(i_2)$, which implies that $w$ contains the pattern $1342$ and the proof  goes back to \textbf{Case 1}.

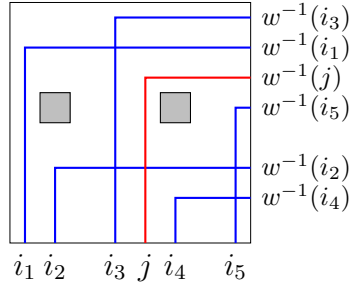
\begin{figure}[ht]
	\centering
	\begin{tikzpicture}[scale=0.4]
	\draw[very thin, step=1] (0,0) rectangle (8,8); 
	\draw[thick, blue](0.5,0)--(0.5,6.5)--(8,6.5);
	\draw[thick, blue](1.5,0)--(1.5,2.5)--(8,2.5);
	\draw[thick, blue](3.5,0)--(3.5,7.5)--(8,7.5);
	\draw[thick, blue](5.5,0)--(5.5,1.5)--(8,1.5);
	\draw[thick, blue](7.5,0)--(7.5,4.5)--(8,4.5);
	\draw[thick, red](4.5,0)--(4.5,5.5)--(8,5.5);
	\draw[fill=lightgray](1,4) rectangle (2,5);
	\draw[fill=lightgray](5,4) rectangle (6,5);
	\node[below] at (0.5,0) {$i_1$};
	\node[below] at (1.5,0) {$i_2$};
	\node[below] at (3.5,0) {$i_3$};
	\node[below] at (5.5,0) {$i_4$};
	\node[below] at (7.5,0) {$i_5$};
	\node[below] at (4.5,0) {$j$};	
	\node[right] at (8,7.5) {\footnotesize$w^{-1}(i_3)$};
	\node[right] at (8,6.5) {\footnotesize$w^{-1}(i_1)$};
	\node[right] at (8,4.5) {\footnotesize$w^{-1}(i_5)$};
	\node[right] at (8,2.5) {\footnotesize$w^{-1}(i_2)$};
	\node[right] at (8,1.5) {\footnotesize$w^{-1}(i_4)$};
	\node[right] at (8,5.5) {\footnotesize$w^{-1}(j)$};
	\end{tikzpicture} 	
	\caption{An impermissible droop in the pattern $31524$}
	\label{fig:Droop not permissible in pattern 31524}
\end{figure}

\noindent\textbf{Case 3.} $w$ contains the pattern $12534$. Then there exist $i_1<i_2<i_3<i_4<i_5$ such that $w^{-1}(i_1)<w^{-1}(i_2)<w^{-1}(i_5)<w^{-1}(i_3)<w^{-1}(i_4)$, and $(w^{-1}(i_5),i_3)$, $(w^{-1}(i_5),i_4)\in D(w)$, see Figure \ref{fig:pattern 12534} for an illustration. 

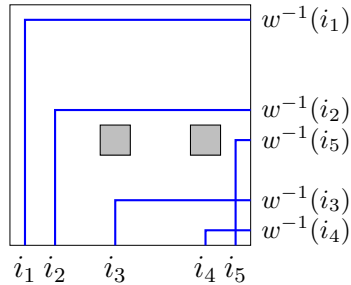
\begin{figure}[ht]
	\centering
	\begin{tikzpicture}[scale=0.4]
	\draw[very thin, step=1] (0,0) rectangle (8,8); 
	\draw[thick, blue](0.5,0)--(0.5,7.5)--(8,7.5);
	\draw[thick, blue](1.5,0)--(1.5,4.5)--(8,4.5);
	\draw[thick, blue](3.5,0)--(3.5,1.5)--(8,1.5);
	\draw[thick, blue](6.5,0)--(6.5,0.5)--(8,0.5);
	\draw[thick, blue](7.5,0)--(7.5,3.5)--(8,3.5);
	\draw[fill=lightgray](3,3) rectangle (4,4);
	\draw[fill=lightgray](6,3) rectangle (7,4);
	\node[below] at (0.5,0) {$i_1$};
	\node[below] at (1.5,0) {$i_2$};
	\node[below] at (3.5,0) {$i_3$};
	\node[below] at (6.5,0) {$i_4$};
	\node[below] at (7.5,0) {$i_5$};	
	\node[right] at (8,7.5) {\footnotesize$w^{-1}(i_1)$};
	\node[right] at (8,4.5) {\footnotesize$w^{-1}(i_2)$};
	\node[right] at (8,3.5) {\footnotesize$w^{-1}(i_5)$};
	\node[right] at (8,1.5) {\footnotesize$w^{-1}(i_3)$};
	\node[right] at (8,0.5) {\footnotesize$w^{-1}(i_4)$};
	\end{tikzpicture} 	
	\caption{The pattern $12534$}
	\label{fig:pattern 12534}
\end{figure}

\vspace{.2cm}
\noindent\textbf{Claim 2:} If $w$ contains the pattern $12534$ and avoids the patterns $1342$, $1432$ and $31524$, then  $(w^{-1}(i_5),j)\in D(w)$ for all $i_3\leq j\leq i_4$. 

\begin{proof}[Proof of Claim 2]
Clearly, $(w^{-1}(i_5),i_3), (w^{-1}(i_5),i_4)\in D(w)$. Suppose $(w^{-1}(i_5),j)\notin D(w)$   for some $i_3<j<i_4$. Then $(w^{-1}(i_5),j)$ must be a vertical tile, which implies  pipe $j$ satisfies $w^{-1}(j)>w^{-1}(i_5)$. The right end of pipe  $j$ has two situations.
	
\textcircled{1}  $w^{-1}(j)<w^{-1}(i_1)$, see Figure \ref{fig:empty box not continue case 1}. Then we see that $w^{-1}(j)<w^{-1}(i_1)<w^{-1}(i_5)<w^{-1}(i_3)<w^{-1}(i_4)$, which implies $w$ contains the pattern $31524$, and proof follows by \textbf{Case 2}. 
	
	\textcircled{2}   $w^{-1}(i_1)<w^{-1}(j)<w^{-1}(i_5)$, see Figure \ref{fig:empty box not continue case 2}.  Then we find $w^{-1}(i_1)<w^{-1}(j)<w^{-1}(i_5)<w^{-1}(i_3)$, which implies $w$ contains the pattern $1342$, and proof follows by \textbf{Case 1}. 
	
Therefore, all boxes between $(w^{-1}(i_5),i_3)$ and $(w^{-1}(i_5),i_4)$ are empty boxes.  
\end{proof}

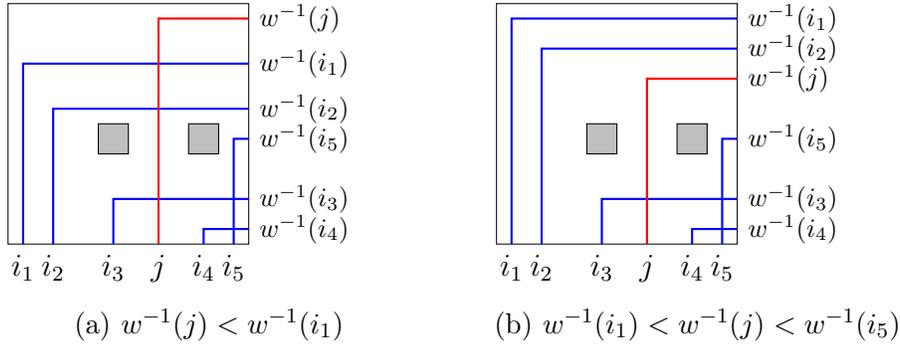
\begin{figure}[ht]
	\centering
	\begin{minipage}{0.35\textwidth}
		\begin{tikzpicture}[scale=0.4]
		\draw[very thin, step=1] (0,0) rectangle (8,8); 
		\draw[thick, blue](0.5,0)--(0.5,6)--(8,6);
		\draw[thick, blue](1.5,0)--(1.5,4.5)--(8,4.5);
		\draw[thick, blue](3.5,0)--(3.5,1.5)--(8,1.5);
		\draw[thick, blue](6.5,0)--(6.5,0.5)--(8,0.5);
		\draw[thick, blue](7.5,0)--(7.5,3.5)--(8,3.5);
		\draw[thick, red](5,0)--(5,7.5)--(8,7.5);
		\draw[fill=lightgray](3,3) rectangle (4,4);
		\draw[fill=lightgray](6,3) rectangle (7,4);
		\node[below] at (0.5,0) {$i_1$};
		\node[below] at (1.5,0) {$i_2$};
		\node[below] at (3.5,0) {$i_3$};
		\node[below] at (6.5,0) {$i_4$};
		\node[below] at (7.5,0) {$i_5$};
		\node[below] at (5,0) {$j$};	
		\node[right] at (8,7.5) {\footnotesize$w^{-1}(j)$};
		\node[right] at (8,4.5) {\footnotesize$w^{-1}(i_2)$};
		\node[right] at (8,3.5) {\footnotesize$w^{-1}(i_5)$};
		\node[right] at (8,1.5) {\footnotesize$w^{-1}(i_3)$};
		\node[right] at (8,0.5) {\footnotesize$w^{-1}(i_4)$};
		\node[right] at (8,6) {\footnotesize$w^{-1}(i_1)$};
		\end{tikzpicture} 
		\subcaption{$w^{-1}(j)<w^{-1}(i_1)$} 
		\label{fig:empty box not continue case 1}
	\end{minipage}
	\qquad
	\begin{minipage}{0.35\textwidth}
		\begin{tikzpicture}[scale=0.4]
		\draw[very thin, step=1] (0,0) rectangle (8,8); 
		\draw[thick, blue](0.5,0)--(0.5,7.5)--(8,7.5);
		\draw[thick, blue](1.5,0)--(1.5,6.5)--(8,6.5);
		\draw[thick, blue](3.5,0)--(3.5,1.5)--(8,1.5);
		\draw[thick, blue](6.5,0)--(6.5,0.5)--(8,0.5);
		\draw[thick, blue](7.5,0)--(7.5,3.5)--(8,3.5);
		\draw[thick, red](5,0)--(5,5.5)--(8,5.5);
		\draw[fill=lightgray](3,3) rectangle (4,4);
		\draw[fill=lightgray](6,3) rectangle (7,4);
		\node[below] at (0.5,0) {$i_1$};
		\node[below] at (1.5,0) {$i_2$};
		\node[below] at (3.5,0) {$i_3$};
		\node[below] at (6.5,0) {$i_4$};
		\node[below] at (7.5,0) {$i_5$};
		\node[below] at (5,0) {$j$};	
		\node[right] at (8,7.5) {\footnotesize$w^{-1}(i_1)$};
		\node[right] at (8,6.5) {\footnotesize$w^{-1}(i_2)$};
		\node[right] at (8,3.5) {\footnotesize$w^{-1}(i_5)$};
		\node[right] at (8,1.5) {\footnotesize$w^{-1}(i_3)$};
		\node[right] at (8,0.5) {\footnotesize$w^{-1}(i_4)$};
		\node[right] at (8,5.5) {\footnotesize$w^{-1}(j)$};
		\end{tikzpicture} 
		\subcaption{$w^{-1}(i_1)<w^{-1}(j)<w^{-1}(i_5)$}
		\label{fig:empty box not continue case 2}
	\end{minipage}
	\caption{Non-consecutive empty boxes in Case 3}
	\label{fig:empty box not continue}
\end{figure}

By \textbf{Claim 2}, both  $(w^{-1}(i_5),i_3)$ and $(w^{-1}(i_5),i_3+1)$ are  empty boxes when $w$ contains the pattern $12534$ and avoids the patterns $1342$, $1432$ and $31524$. 
In the following, we first describe how to construct   $\mathcal{P}_1$ and $\mathcal{P}_2$ from $D(w)$ and then show such droop operations are indeed permissible.

To construct $\mathcal{P}_1$, first droop pipe $i_2$ to the empty box at $(w^{-1}(i_5),i_3)$, thus turning $(w^{-1}(i_2),i_2)$ into a new empty box. Then droop pipe $i_1$ to the empty box at $(w^{-1}(i_2),i_2)$, see Figure \ref{fig:The construction of P_1 in Case3}.

\begin{figure}[ht]
	\centering
	\begin{minipage}{0.3\textwidth}
    \begin{tikzpicture}[scale=0.4]
        \draw[very thin] (0,0) rectangle (8,8); 
		\draw[fill=lightgray](6,3) rectangle (7,4);
  	\draw[fill=lightgray](5,3) rectangle (6,4);
		\draw[thick, blue](0.5,0)--(0.5,7.5)--(8,7.5);
		\draw[thick, blue](2.5,0)--(2.5,5.5)--(8,5.5);
        \draw[draw=red,dashed](2,6) rectangle (6,3);
		\node[below] at (0.5,0) {$i_1$};
		\node[below] at (2.5,0) {$i_2$};
		\node[below] at (5.5,0) {$i_3$};
		\node[below right] at (5.8,0) {$i_3$+$1$};	
		\node[right] at (8,7.5) {\footnotesize$w^{-1}(i_1)$};
		\node[right] at (8,5.5) {\footnotesize$w^{-1}(i_2)$};
		\node[right] at (8,3.5) {\footnotesize$w^{-1}(i_5)$};
    \end{tikzpicture}      
	\end{minipage}
	\quad
	\begin{minipage}{0.3\textwidth}
    \begin{tikzpicture}[scale=0.4]
        \draw[very thin] (0,0) rectangle (8,8); 
		\draw[fill=lightgray](6,3) rectangle (7,4);
		\draw[fill=lightgray](2,5) rectangle (3,6);
		\draw[thick, blue](0.5,0)--(0.5,7.5)--(8,7.5);
		\draw[thick, blue](2.5,0)--(2.5,3.5)--(5.5,3.5)--(5.5,5.5)--(8,5.5);
        \draw[draw=red,dashed](0,8) rectangle (3,5);
		\node[below] at (0.5,0) {$i_1$};
		\node[below] at (2.5,0) {$i_2$};
		\node[below] at (5.5,0) {$i_3$};
		\node[below right] at (5.8,0) {$i_3$+$1$};	
		\node[right] at (8,7.5) {\footnotesize$w^{-1}(i_1)$};
		\node[right] at (8,5.5) {\footnotesize$w^{-1}(i_2)$};
		\node[right] at (8,3.5) {\footnotesize$w^{-1}(i_5)$};
    \end{tikzpicture}
    \end{minipage}
    \quad
    \begin{minipage}{0.3\textwidth}
        \begin{tikzpicture}[scale=0.4]
		\draw[very thin] (0,0) rectangle (8,8);
		\draw[fill=lightgray](6,3) rectangle (7,4);
        \draw[fill=lightgray](0,7) rectangle (1,8);
		\draw[thick, blue](0.5,0)--(0.5,5.5)--(2.5,5.5)--(2.5,7.5)--(8,7.5);
		\draw[thick, blue](2.5,0)--(2.5,3.5)--(5.5,3.5)--(5.5,5.5)--(8,5.5);
		\node[below] at (0.5,0) {$i_1$};
		\node[below] at (2.5,0) {$i_2$};
		\node[below] at (5.5,0) {$i_3$};
		\node[below right] at (5.8,0) {$i_3$+$1$};	
		\node[right] at (8,7.5) {\footnotesize$w^{-1}(i_1)$};
		\node[right] at (8,5.5) {\footnotesize$w^{-1}(i_2)$};
		\node[right] at (8,3.5) {\footnotesize$w^{-1}(i_5)$};
		\end{tikzpicture}
    \end{minipage}
	\caption{The construction of $\mathcal{P}_1$ in Case 3}
 \label{fig:The construction of P_1 in Case3}
\end{figure}
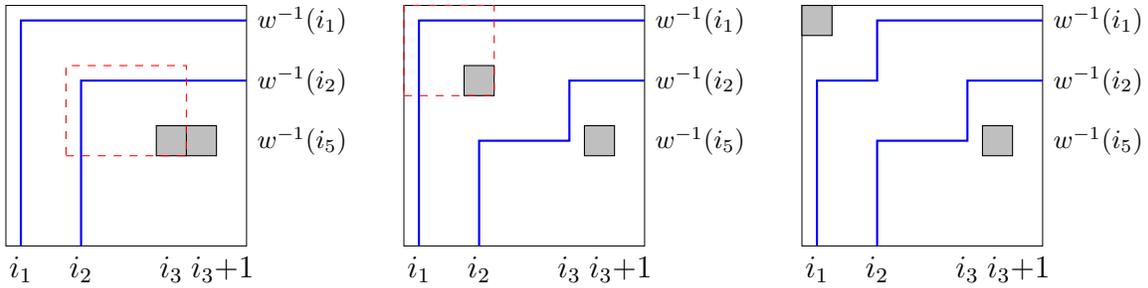

To construct $\mathcal{P}_2$, first droop pipe $i_2$ to the empty box at $(w^{-1}(i_5),i_3+1)$. Then droop pipe $i_1$ to the empty box at $(w^{-1}(i_2),i_2)$, see Figure \ref{fig:The construction of P_2 in Case 3}. 

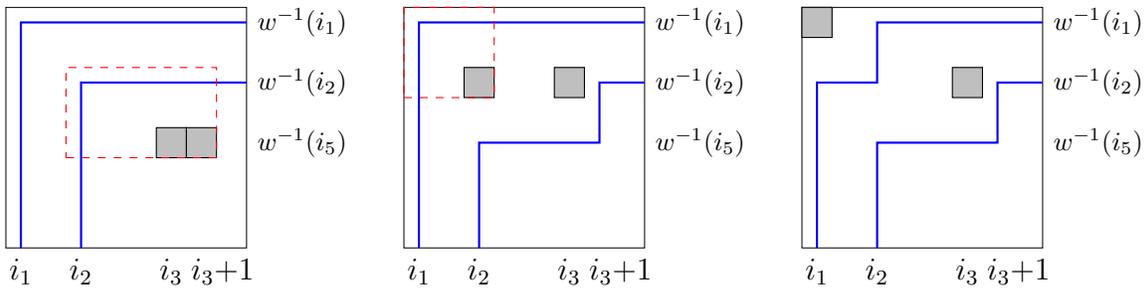
\begin{figure}[ht]
    \centering
    \begin{minipage}{0.3\textwidth}
    \begin{tikzpicture}[scale=0.4]
        \draw[very thin] (0,0) rectangle (8,8); 
		\draw[fill=lightgray](5,3) rectangle (6,4);
		\draw[fill=lightgray](6,3) rectangle (7,4);
		\draw[thick, blue](0.5,0)--(0.5,7.5)--(8,7.5);
		\draw[thick, blue](2.5,0)--(2.5,5.5)--(8,5.5);
        \draw[draw=red,dashed](2,6) rectangle (7,3);
		\node[below] at (0.5,0) {$i_1$};
		\node[below] at (2.5,0) {$i_2$};
		\node[below] at (5.5,0) {$i_3$};
		\node[below right] at (5.8,0) {$i_3$+$1$};	
		\node[right] at (8,7.5) {\footnotesize$w^{-1}(i_1)$};
		\node[right] at (8,5.5) {\footnotesize$w^{-1}(i_2)$};
		\node[right] at (8,3.5) {\footnotesize$w^{-1}(i_5)$};
    \end{tikzpicture}      
	\end{minipage}
	\quad
 \begin{minipage}{0.3\textwidth}
     \begin{tikzpicture}[scale=0.4]
        \draw[very thin] (0,0) rectangle (8,8); 
		\draw[fill=lightgray](2,5) rectangle (3,6);
		\draw[fill=lightgray](5,5) rectangle (6,6);
		\draw[thick, blue](0.5,0)--(0.5,7.5)--(8,7.5);
        \draw[thick, blue](2.5,0)--(2.5,3.5)--(6.5,3.5)--(6.5,5.5)--(8,5.5);
        \draw[draw=red,dashed](0,8) rectangle (3,5);
		\node[below] at (0.5,0) {$i_1$};
		\node[below] at (2.5,0) {$i_2$};
		\node[below] at (5.5,0) {$i_3$};
		\node[below right] at (5.8,0) {$i_3$+$1$};	
		\node[right] at (8,7.5) {\footnotesize$w^{-1}(i_1)$};
		\node[right] at (8,5.5) {\footnotesize$w^{-1}(i_2)$};
		\node[right] at (8,3.5) {\footnotesize$w^{-1}(i_5)$};
    \end{tikzpicture}      
	\end{minipage}
	\quad
    \begin{minipage}{0.3\textwidth}
        \begin{tikzpicture}[scale=0.4]
		\draw[very thin, step=1] (0,0) rectangle (8,8); 
		\draw[fill=lightgray](0,7) rectangle (1,8);
		\draw[fill=lightgray](5,5) rectangle (6,6);
		\draw[thick, blue](0.5,0)--(0.5,5.5)--(2.5,5.5)--(2.5,7.5)--(8,7.5);
		\draw[thick, blue](2.5,0)--(2.5,3.5)--(6.5,3.5)--(6.5,5.5)--(8,5.5);
		\node[below] at (0.5,0) {$i_1$};
		\node[below] at (2.5,0) {$i_2$};
		\node[below] at (5.5,0) {$i_3$};
		\node[below right] at (5.8,0) {$i_3$+$1$};	
		\node[right] at (8,7.5) {\footnotesize$w^{-1}(i_1)$};
		\node[right] at (8,5.5) {\footnotesize$w^{-1}(i_2)$};
		\node[right] at (8,3.5) {\footnotesize$w^{-1}(i_5)$};
		\end{tikzpicture}
    \end{minipage}
    \caption{The construction of $\mathcal{P}_2$ in Case 3}
    \label{fig:The construction of P_2 in Case 3}
\end{figure}

Now we explain the droops in constructing  $\mathcal{P}_1$ and  $\mathcal{P}_2$ are all permissible. For  $\mathcal{P}_1$, if the first droop is not permissible, then there is an SE elbow in the region $[w^{-1}(i_2),w^{-1}(i_5)]\times[i_2,i_3]$, view the new pipe as pipe $i_2$. Similarly, if the second step is not permissible, then there is an SE elbow in the region $[w^{-1}(i_1),w^{-1}(i_2)]\times[i_1,i_2]$,  view the new pipe as pipe $i_1$.  The case for the construction of  $\mathcal{P}_2$ is exactly the same.

 The empty boxes and NW elbows of $\mathcal{P}_1$ and $\mathcal{P}_2$ only differs in row $w^{-1}(i_2)$ and row $w^{-1}(i_5)$. We still use $F(x)$ to denote the polynomial generated by the other rows. For simplicity, let $p_1=w^{-1}(i_2)$ and $p_2=w^{-1}(i_5)$. Thus $\mathcal{P}_1$ and $\mathcal{P}_2$ generate the following two polynomials:
$$F(x)(1+x_{p_1})x_{p_1}^{k_1}(1+x_{p_2})x_{p_2}^{k_2+1},$$
$$F(x)(1+x_{p_1})x_{p_1}^{k_1+1}(1+x_{p_2})x_{p_2}^{k_2},$$
where $k_1$ and $k_2$ are the number of empty boxes in row $w^{-1}(i_2)$ and row $w^{-1}(i_5)$, respectively. Expand the two polynomials above we find $F(x)x_{p_1}^{k_1+1}x_{p_2}^{k_2+1}$ appears at least twice in $\widetilde{\G}_{w}(x)$. 

\vspace{.2cm}
\noindent\textbf{Case 4.} $w$ contains the pattern $21534$. Then there exist $i_1<i_2<i_3<i_4<i_5$ such that $w^{-1}(i_2)<w^{-1}(i_1)<w^{-1}(i_5)<w^{-1}(i_3)<w^{-1}(i_4)$, and $(w^{-1}(i_5),i_3)$, $(w^{-1}(i_5),i_4)\in D(w)$,  see Figure \ref{fig:pattern 21534} for an illustration. 

\begin{figure}[ht]
	\centering
	\begin{tikzpicture}[scale=0.4]
	\draw[very thin, step=1] (0,0) rectangle (8,8); 
	\draw[thick, blue](0.5,0)--(0.5,5.5)--(8,5.5);
	\draw[thick, blue](1.5,0)--(1.5,7.5)--(8,7.5);
	\draw[thick, blue](3.5,0)--(3.5,1.5)--(8,1.5);
	\draw[thick, blue](6.5,0)--(6.5,0.5)--(8,0.5);
	\draw[thick, blue](7.5,0)--(7.5,3.5)--(8,3.5);
	\draw[fill=lightgray](3,3) rectangle (4,4);
	\draw[fill=lightgray](6,3) rectangle (7,4);
	\node[below] at (0.5,0) {$i_1$};
	\node[below] at (1.5,0) {$i_2$};
	\node[below] at (3.5,0) {$i_3$};
	\node[below] at (6.5,0) {$i_4$};
	\node[below] at (7.5,0) {$i_5$};	
	\node[right] at (8,7.5) {\footnotesize$w^{-1}(i_2)$};
	\node[right] at (8,5.5) {\footnotesize$w^{-1}(i_1)$};
	\node[right] at (8,3.5) {\footnotesize$w^{-1}(i_5)$};
	\node[right] at (8,1.5) {\footnotesize$w^{-1}(i_3)$};
	\node[right] at (8,0.5) {\footnotesize$w^{-1}(i_4)$};
	\end{tikzpicture} 	
	\caption{The pattern $21534$}
	\label{fig:pattern 21534}
\end{figure}
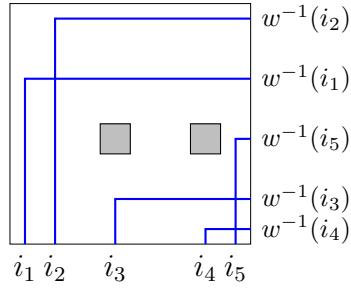

One can easily check that  \textbf{Claim 2} still holds when $w$ contains the pattern $21534$ and avoids the patterns $1342$, $1432$ and $31524$. Thus  all boxes between $(w^{-1}(i_5),i_3)$ and $(w^{-1}(i_5),i_4)$ in row $w^{-1}(i_5)$ are still empty boxes. Again, we will use the two empty boxes at $(w^{-1}(i_5),i_3)$ and $(w^{-1}(i_5),i_3+1)$ to construct $\mathcal{P}_1$ and $\mathcal{P}_2$.

To construct $\mathcal{P}_1$, first droop   pipe $i_1$ to the empty box at $(w^{-1}(i_5),i_3)$. Then apply a $K$-theoretic  droop to pipe $i_1$ and pipe $i_2$ in the dashed area, see Figure \ref{fig:21534 construct P_1}.  

\begin{figure}[ht]
	\centering
	\begin{minipage}{0.3\textwidth}
		\begin{tikzpicture}[scale=0.4]
		\draw[very thin] (0,0) rectangle (8,8); 
		\draw[fill=lightgray](6,3) rectangle (7,4);
  	\draw[fill=lightgray](5,3) rectangle (6,4);
		\draw[thick, blue](0.5,0)--(0.5,5.5)--(8,5.5);
		\draw[thick, blue](2.5,0)--(2.5,7.5)--(8,7.5);
        \draw[draw=red,dashed](0,6) rectangle (6,3);
		\node[below] at (0.5,0) {$i_1$};
		\node[below] at (2.5,0) {$i_2$};
		\node[below] at (5.5,0) {$i_3$};
		\node[below right] at (5.8,0) {$i_3$+$1$};		
		\node[right] at (8,7.5) {\footnotesize$w^{-1}(i_2)$};
		\node[right] at (8,5.5) {\footnotesize$w^{-1}(i_1)$};
		\node[right] at (8,3.5) {\footnotesize$w^{-1}(i_5)$};
		\end{tikzpicture}
	\end{minipage}
	\quad
	\begin{minipage}{0.3\textwidth}
		\begin{tikzpicture}[scale=0.4]
		\draw[very thin] (0,0) rectangle (8,8); 
		\draw[fill=lightgray](0,5) rectangle (1,6);
		\draw[fill=lightgray](6,3) rectangle (7,4);
		\draw[thick, blue](0.5,0)--(0.5,3.5)--(5.5,3.5)--(5.5,5.5)--(8,5.5);
		\draw[thick, blue](2.5,0)--(2.5,7.5)--(8,7.5);
		\draw[dashed,red](2,3) rectangle (6,8);
		\node[below] at (0.5,0) {$i_1$};
		\node[below] at (2.5,0) {$i_2$};
		\node[below] at (5.5,0) {$i_3$};
		\node[below right] at (5.8,0) {$i_3$+$1$};	
		\node[right] at (8,7.5) {\footnotesize$w^{-1}(i_2)$};
		\node[right] at (8,5.5) {\footnotesize$w^{-1}(i_1)$};
		\node[right] at (8,3.5) {\footnotesize$w^{-1}(i_5)$};
		\end{tikzpicture}
	\end{minipage}
	\quad
	\begin{minipage}{0.3\textwidth}
		\begin{tikzpicture}[scale=0.4]
		\draw[very thin] (0,0) rectangle (8,8); 
		\draw[fill=lightgray](0,5) rectangle (1,6);
		\draw[fill=lightgray](6,3) rectangle (7,4);
        \draw[fill=lightgray](2,7) rectangle (3,8);
		\draw[thick, blue](0.5,0)--(0.5,3.5)--(5.5,3.5)--(5.5,7.5)--(8,7.5);
		\draw[thick, blue](2.5,0)--(2.5,5.5)--(8,5.5);
		\draw[dashed,blue](2.5,3.5)--(2.5,5.5)--(6.5,5.5);
		\node[below] at (0.5,0) {$i_1$};
		\node[below] at (2.5,0) {$i_2$};
		\node[below] at (5.5,0) {$i_3$};
		\node[below right] at (5.8,0) {$i_3$+$1$};	
		\node[right] at (8,7.5) {\footnotesize$w^{-1}(i_2)$};
		\node[right] at (8,5.5) {\footnotesize$w^{-1}(i_1)$};
		\node[right] at (8,3.5) {\footnotesize$w^{-1}(i_5)$};
		\end{tikzpicture}
	\end{minipage}
	\caption{The construction of $\mathcal{P}_1$ in Case 4}
	\label{fig:21534 construct P_1}
\end{figure}
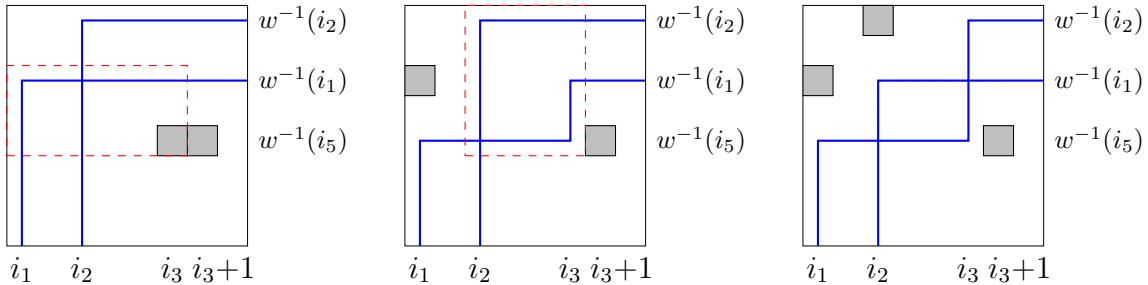

To construct $\mathcal{P}_2$, first droop   pipe $i_1$ to the empty box at $(w^{-1}(i_5),i_3+1)$, thus turning $(w^{-1}(i_1),i_3)$ into a new empty box. Then   droop pipe $i_2$ to the new empty box at $(w^{-1}(i_1),i_3)$, as shown in Figure \ref{fig:21534 construct P_2}.

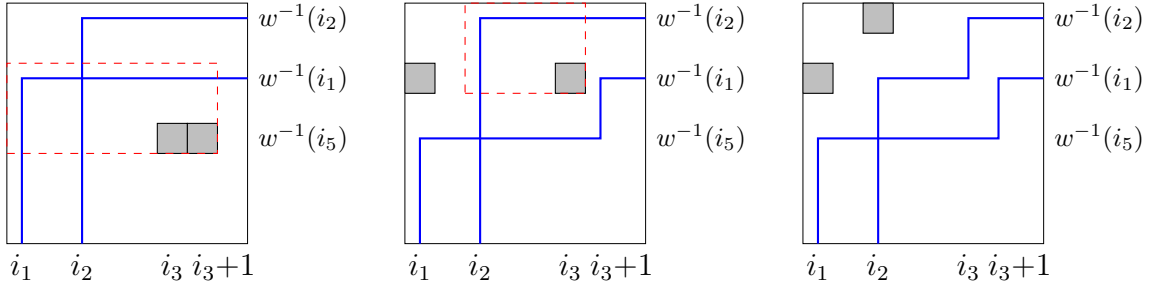
\begin{figure}[ht]
	\centering
	\begin{minipage}{0.3\textwidth}
		\begin{tikzpicture}[scale=0.4]
		\draw[very thin] (0,0) rectangle (8,8); 
		\draw[fill=lightgray](5,3) rectangle (6,4);
    	\draw[fill=lightgray](6,3) rectangle (7,4);
		\draw[thick, blue](0.5,0)--(0.5,5.5)--(8,5.5);
		\draw[thick, blue](2.5,0)--(2.5,7.5)--(8,7.5);
        \draw[draw=red,dashed](0,6) rectangle (7,3);
		\node[below] at (0.5,0) {$i_1$};
		\node[below] at (2.5,0) {$i_2$};
		\node[below] at (5.5,0) {$i_3$};
		\node[below right] at (5.8,0) {$i_3$+$1$};		
		\node[right] at (8,7.5) {\footnotesize$w^{-1}(i_2)$};
		\node[right] at (8,5.5) {\footnotesize$w^{-1}(i_1)$};
		\node[right] at (8,3.5) {\footnotesize$w^{-1}(i_5)$};
		\end{tikzpicture}
	\end{minipage}
	\quad
	\begin{minipage}{0.3\textwidth}
		\begin{tikzpicture}[scale=0.4]
		\draw[very thin] (0,0) rectangle (8,8); 
		\draw[fill=lightgray](5,5) rectangle (6,6);
    	\draw[fill=lightgray](0,5) rectangle (1,6);
		\draw[thick, blue](0.5,0)--(0.5,3.5)--(6.5,3.5)--(6.5,5.5)--(8,5.5);
		\draw[thick, blue](2.5,0)--(2.5,7.5)--(8,7.5);
        \draw[draw=red,dashed](2,8) rectangle (6,5);
		\node[below] at (0.5,0) {$i_1$};
		\node[below] at (2.5,0) {$i_2$};
		\node[below] at (5.5,0) {$i_3$};
		\node[below right] at (5.8,0) {$i_3$+$1$};	
		\node[right] at (8,7.5) {\footnotesize$w^{-1}(i_2)$};
		\node[right] at (8,5.5) {\footnotesize$w^{-1}(i_1)$};
		\node[right] at (8,3.5) {\footnotesize$w^{-1}(i_5)$};
		\end{tikzpicture}
	\end{minipage}
	\quad
	\begin{minipage}{0.3\textwidth}
		\begin{tikzpicture}[scale=0.4]
		\draw[very thin] (0,0) rectangle (8,8); 
		\draw[fill=lightgray](2,7) rectangle (3,8);
    	\draw[fill=lightgray](0,5) rectangle (1,6);
		\draw[thick, blue](0.5,0)--(0.5,3.5)--(6.5,3.5)--(6.5,5.5)--(8,5.5);
		\draw[thick, blue](2.5,0)--(2.5,5.5)--(5.5,5.5)--(5.5,7.5)--(8,7.5);
		\node[below] at (0.5,0) {$i_1$};
		\node[below] at (2.5,0) {$i_2$};
		\node[below] at (5.5,0) {$i_3$};
		\node[below right] at (5.8,0) {$i_3$+$1$};	
		\node[right] at (8,7.5) {\footnotesize$w^{-1}(i_2)$};
		\node[right] at (8,5.5) {\footnotesize$w^{-1}(i_1)$};
		\node[right] at (8,3.5) {\footnotesize$w^{-1}(i_5)$};
		\end{tikzpicture}
	\end{minipage}
	\caption{The construction of $\mathcal{P}_2$ in Case 4}
	\label{fig:21534 construct P_2}
\end{figure}

Again we need to explain all the droops in constructing  $\mathcal{P}_1$ and  $\mathcal{P}_2$ are permissible. Suppose that there is an SE elbow labeled $j$ in the region $[w^{-1}(i_1),w^{-1}(i_5)]\times[i_2,i_3]$, then there is a subsequence $i_2<j<i_3<i_4<i_5$ such that $w^{-1}(i_2)<w^{-1}(j)<w^{-1}(i_5)<w^{-1}(i_3)<w^{-1}(i_4)$, which implies $w$ contains the pattern $12534$ and the proof returns to \textbf{Case 3}. 
If there is an SE elbow is in $[w^{-1}(i_2),w^{-1}(i_1)]\times [i_2,i_3]$ or $[w^{-1}(i_1),w^{-1}(i_5)]\times [i_1,i_2]$, similarly $w$ contains a 12534 pattern.

Let $p_1=w^{-1}(i_1)$ and $p_2=w^{-1}(i_5)$. 
By construction, $\mathcal{P}_1$ and $\mathcal{P}_2$ generate the following two polynomials:
\begin{align*}
&F(x)x_{p_1}^{k_1}x_{p_2}^{k_2+1}(1+x_{p_2}),\\
&F(x)x_{p_1}^{k_1}(1+x_{p_1})x_{p_2}^{k_2}(1+x_{p_2}), 
\end{align*}
where $k_1$ and $k_2$ are the number of empty boxes in row $w^{-1}(i_1)$ and row $w^{-1}(i_5)$, respectively. Expand the above two polynomials  we find that $F(x)x_{p_1}^{k_1}x_{p_2}^{k_2+1}$ appears at least twice in $\widetilde{\G}_{w}(x)$. 
\end{proof}

Next we prove the sufficiency of Theorem \ref{thm:main theorem}.


\begin{thm}\label{thm:sufficiency}
	If $w$ avoids the patterns $1432$, $1342$, $13254$, $31524$, $12534$ and $21534$, then the coefficients of $\widetilde{\G}_{w}(x)$ equal to either $0$ or $1$. 
\end{thm}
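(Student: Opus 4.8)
The plan is to establish a factorization of $\widetilde{\G}_w(x)$ when $w$ avoids the six forbidden patterns, mirroring the classical observation that a Grothendieck polynomial of a "simple enough" permutation splits as a product over the connected components of its essential diagram. First I would decompose $w$ into its indecomposable blocks: write $[n]$ as a union of intervals on which $w$ restricts to permutations $w^{(1)},\dots,w^{(m)}$, so that $\G_w(x)$ factors as a product of the $\G_{w^{(i)}}(x)$ in disjoint variable sets. This reduces the problem to the case where $w$ is indecomposable, and I would then argue that an indecomposable permutation avoiding the six patterns must be very special — I expect the claim is that $w$ (or $w^{-1}$, using $\G_w(x)$ versus a mirror symmetry of BPDs) is, up to this block structure, either a dominant permutation, or a permutation of a highly restricted shape whose Rothe diagram $D(w)$ is an "almost rectangular" region. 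For a dominant permutation, $\G_w(x) = \prod_{(i,j)\in D(w)} x_i$ after the sign normalization only contributes the Schubert term and is trivially zero-one, but the non-dominant indecomposable cases are where the work is.

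The technical heart of the argument will be a careful analysis of $\BPD(w)$ via Proposition \ref{prop:Anna BPD can be obtained by droops}. For the restricted family of permutations that survive the pattern-avoidance, I would show that every bumpless pipe dream $\mathcal{P}\in\BPD(w)$ is obtained by a sequence of droops and $K$-theoretic droops that act essentially independently row by row (or within a single rectangular region), so that the monomial contributions $\prod_{(i,j)\in B(\mathcal{P})} x_i \cdot \prod_{(p,q)\in U(\mathcal{P})}(1-x_p)$ organize into a product $\prod_i g_i(x_i)$ of single-variable factors, each of which is of the form $x_i^{a_i}(1+x_i)^{b_i}$ after sign normalization — and these are manifestly zero-one. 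The pattern avoidance is exactly what rules out the "interleaving" configurations (two nested or crossing droops contributing to the same variable in incompatible ways) that produced the repeated monomials in the necessity proof; so I would phrase the core lemma as: if $w$ avoids the six patterns, the empty boxes and NW elbows of any $\mathcal{P}\in\BPD(w)$ in row $i$ are determined by a single integer parameter, and the generating function over all choices is $x_i^{a_i}(1+x_i)^{b_i}\cdot(\text{constant in }x_i)$.

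I would carry the steps out in this order: (1) reduce to indecomposable $w$ via the block factorization of $\G_w$; (2) classify indecomposable pattern-avoiding permutations, showing $D(w)$ has the requisite shape (this likely uses the $1432,1342$ avoidance to force the diagram's rows to be "nested into a staircase plus rectangle" and the five-letter patterns to control how blocks of equal-length rows can stack); (3) for each such shape, analyze $\BPD(w)$ and prove the droop moves decouple, giving the single-variable factorization of $\widetilde{\G}_w(x)$; (4) conclude that each factor $x_i^{a_i}(1+x_i)^{b_i}$ is zero-one, hence so is the product, finishing the theorem and simultaneously producing the explicit factorization advertised in the introduction for use in Section \ref{sec:Properties of zero-one Grothendieck polynomials}.

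The main obstacle I anticipate is step (2)–(3): proving that pattern avoidance genuinely forces the BPDs to decouple. It is not enough that no \emph{single} pair of droops creates a collision as in the necessity proof; I need a \emph{global} normal form for arbitrary sequences of (ordinary and $K$-theoretic) droops, and must rule out more subtle cascades — e.g., a droop of one pipe that only becomes problematic after it has been displaced by an earlier droop of another pipe. Handling the $K$-theoretic droops carefully is delicate, since they create bumps that can be re-drooped, so I would need an induction on $d(w)-\ell(w)$ or on the number of droop steps, maintaining the invariant that the partially-drooped diagram still restricts, region by region, to a pattern-avoiding configuration, so that the inductive hypothesis applies. Closing this invariant cleanly — in particular showing the forbidden configurations of the necessity proof are the \emph{only} obstructions — is where most of the effort will go.
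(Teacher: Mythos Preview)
Your overall strategy --- factorize $\widetilde{\G}_w(x)$ into pieces with pairwise disjoint variable sets, check each piece is zero-one, and conclude --- is exactly the paper's approach. But your proposed shape for the factors is wrong, and this is not a cosmetic issue: you assert that the factorization is into \emph{single-variable} pieces $x_i^{a_i}(1+x_i)^{b_i}$, and already $w=132$ is a counterexample, since $\widetilde{\G}_{132}(x)=x_1+x_2+x_1x_2$ cannot be written as a product of such one-variable polynomials. The paper shows (Theorem~\ref{thm:factorization of Grothendieck polynomial when avoiding 6 patterns}) that the genuinely multivariate factors are of two types,
\[
\widetilde{F}_k=\sum_{t=0}^{k}x_{p}^{t}x_{p+1}^{k-t}+\sum_{s=1}^{k}x_{p}^{s}x_{p+1}^{k+1-s}
\qquad\text{and}\qquad
\widetilde{G}_l=\sum_{s=1}^{l+1}e_s(x_r,\dots,x_{r+l}),
\]
each of which is zero-one but neither of which splits further over the individual variables. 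So your step~(4) as stated would not go through, and your ``core lemma'' (that the row-$i$ contribution of any $\mathcal P\in\BPD(w)$ is governed by a single integer parameter independent of the other rows) is false.

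Your step~(1) also buys less than you expect: the block decomposition into indecomposable pieces does not isolate the factors. The paper's example $w=58326147$ is indecomposable yet has two separate local structures contributing distinct factors. What actually makes the factorization work is not indecomposability but a structural lemma on the Rothe diagram (Proposition~\ref{prop:description of the Rothe pipe dream when avoiding 6 patterns}): the six-pattern avoidance forces that whenever a pipe $i$ has $k\ge 2$ empty boxes to its southeast, they sit in a single contiguous block in row $w^{-1}(i)+1$, columns $i+1,\dots,i+k$, with pipe $i$ the unique pipe to their northwest. This pins down two local pictures (the paper's structures $A$ and $B$), arranged anti-diagonally in disjoint rows and columns, on which droops act independently; the cascade problem you worry about is ruled out at the level of the Rothe diagram itself, not by an induction on droop sequences. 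You should replace your steps~(1)--(3) with this diagram analysis and compute the two factor types directly.
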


To prove Theorem \ref{thm:sufficiency}, we give a factorization of $\widetilde{\G}_{w}(x)$ when $w$ avoids the above six patterns. 
We list the following lemma for convenience of quotation,  which holds by definition.

\begin{lem}\label{lem:Rothe pipe dream and pattern avoidence}
Let $n>k$,	$w\in S_n$ contains the pattern $\sigma\in S_k$ if and only if $D(w)$ contains $D(\sigma)$. More precisely,  there exist $i_1<\cdots<i_k$ such that $w(i_1)\cdots w(i_k)$ has the same relative order with $\sigma$, and $D(\sigma)$ can be obtained from $D(w)$ by deleting the rows $[n]\backslash \{i_1,\ldots,i_k\}$ and columns $[n]\backslash\{w(i_1),\ldots,w(i_k)\}$.
\end{lem}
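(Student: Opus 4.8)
The statement to prove is Lemma~\ref{lem:Rothe pipe dream and pattern avoidence}, which the authors themselves describe as holding "by definition." So my plan should be a clean unwinding of the definitions of pattern containment and Rothe diagram, together with a verification that deleting the specified rows and columns from $D(w)$ yields exactly $D(\sigma)$.

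\medskip

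\noindent\textbf{Proof proposal.}
The plan is to work directly from the two definitions involved: pattern containment and the Rothe diagram $D(w)=\{(i,j):w(i)>j,\ w^{-1}(j)>i\}$. First I would fix the combinatorial setup: $w\in S_n$ contains $\sigma\in S_k$ precisely when there exist indices $i_1<\cdots<i_k$ in $[n]$ such that, writing $c_1<\cdots<c_k$ for the sorted values $\{w(i_1),\dots,w(i_k)\}$, we have $w(i_a)=c_{\sigma(a)}$ for all $a$. Equivalently, the ``pattern extraction'' map sending row $i_a$ to row $a$ and column $c_b$ to column $b$ carries the dot at $(i_a,w(i_a))$ to the dot at $(a,\sigma(a))$; this is the content of saying $w(i_1)\cdots w(i_k)$ has the same relative order as $\sigma$. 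I would record that this row/column restriction is an order-preserving bijection on the retained indices, so phrases like ``$i<i'$'' and ``$j<j'$'' are preserved and reflected.

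\medskip

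\noindent The second and main step is to show that, under this restriction, a retained box $(i_a,j)$ with $j=c_b$ lies in $D(w)$ if and only if $(a,b)\in D(\sigma)$. For the forward direction: if $(i_a,c_b)\in D(w)$ then $w(i_a)>c_b$ and $w^{-1}(c_b)>i_a$. Since $w(i_a)=c_{\sigma(a)}$ and the $c$'s are increasing, $w(i_a)>c_b$ forces $\sigma(a)>b$. For the condition $w^{-1}(c_b)>i_a$: here I need that the row index where the value $c_b$ sits among the retained rows is $i_{\sigma^{-1}(b)}$, and $w^{-1}(c_b)>i_a$ together with order-preservation of the retained rows gives $i_{\sigma^{-1}(b)}>i_a$, hence $\sigma^{-1}(b)>a$, i.e.\ $(a,b)\in D(\sigma)$. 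The subtle point — and the one place this is not completely immediate — is the converse: from $\sigma(a)>b$ and $\sigma^{-1}(b)>a$ I must deduce $w(i_a)>c_b$ and $w^{-1}(c_b)>i_a$. The first is immediate since $w(i_a)=c_{\sigma(a)}>c_b$. The second requires knowing that no \emph{non-retained} value in $[n]$ smaller than $c_b$ occupies a row $\le i_a$ in a way that would violate $w^{-1}(c_b)>i_a$ — but in fact $w^{-1}(c_b)$ is simply the row of $w$ where $c_b$ appears, and I claim $w^{-1}(c_b)=i_{\sigma^{-1}(b)}$ directly, because $c_b=w(i_{\sigma^{-1}(b)})$ by definition of the $c$'s and $\sigma$. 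So $w^{-1}(c_b)=i_{\sigma^{-1}(b)}>i_a\Leftrightarrow \sigma^{-1}(b)>a$, and both inequalities transfer cleanly. The only genuine care needed is bookkeeping: confirming that $w(i_{\sigma^{-1}(b)})=c_b$, which is exactly the ``same relative order'' hypothesis.

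\medskip

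\noindent Finally, I would assemble the equivalence of the two statements in the lemma. Given the box-by-box correspondence above, ``$D(w)$ contains a copy of $D(\sigma)$ obtained by deleting rows $[n]\setminus\{i_1,\dots,i_k\}$ and columns $[n]\setminus\{c_1,\dots,c_k\}$'' is literally the same data as ``$w(i_1)\cdots w(i_k)$ has the same relative order as $\sigma$,'' since the Rothe diagram of a permutation determines the permutation (the permutation matrix, hence $w$ itself, is recoverable from $D(w)$). This gives the ``if and only if'' of the first sentence and the ``more precisely'' refinement simultaneously. I expect the write-up to be short: one paragraph fixing notation for the retained rows/columns and the induced order-isomorphism, one paragraph verifying the box correspondence $(i_a,c_b)\in D(w)\iff(a,b)\in D(\sigma)$ via the two inequalities $\sigma(a)>b$ and $\sigma^{-1}(b)>a$, and one sentence noting that $D(\cdot)$ is injective on permutations to close the loop. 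The main (minor) obstacle is purely notational: keeping straight the three index sets — the retained rows $i_a$, the retained columns $c_b$, and their images $a,b$ — and not conflating $w^{-1}$ restricted to retained values with $\sigma^{-1}$.
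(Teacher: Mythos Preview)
Your proposal is correct and matches the paper's own treatment: the authors give no proof at all, stating only that the lemma ``holds by definition,'' and your careful unwinding of the definitions of pattern containment and Rothe diagram is exactly the verification they leave to the reader. Your key computation $(i_a,c_b)\in D(w)\iff(a,b)\in D(\sigma)$ via the two equivalences $w(i_a)>c_b\iff\sigma(a)>b$ and $w^{-1}(c_b)=i_{\sigma^{-1}(b)}>i_a\iff\sigma^{-1}(b)>a$ is clean and correct, and the appeal to injectivity of $D(\cdot)$ to close the converse is appropriate.
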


\begin{prop}\label{prop:description of the Rothe pipe dream when avoiding 6 patterns}
Assume that $w$ avoids the patterns $1432$, $1342$, $13254$, $31524$, $12534$ and $21534$. If there are $k\ (k\ge2)$ empty boxes in the southeast region of any pipe $i\ (1\le i\le n)$ in $D(w)$, then these $k$ empty boxes are all in row $w^{-1}(i)+1$ and consecutively arranged from column $i+1$ to column $i+k$. Moreover, pipe $i$ is the unique pipe to the northwest of these $k$ boxes.
	
\end{prop}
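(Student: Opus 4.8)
The plan is to analyze the Rothe pipe dream $D(w)$ directly, extracting the local structure forced by avoidance of the six patterns. Fix a pipe $i$ and let $R$ be the set of empty boxes in the southeast region of pipe $i$ in $D(w)$, i.e. boxes $(r,c)\in D(w)$ with $r>w^{-1}(i)$ and $c>i$; assume $|R|=k\ge 2$. The essential observation is that since $D(w)$ is the Rothe diagram, $(r,c)\in D(w)$ means $w(r)>c$ and $w^{-1}(c)>r$; in particular pipe $i$ (whose dot sits at $(w^{-1}(i),i)$) passes to the left of and above every box of $R$. First I would show all boxes of $R$ lie in a single row. Suppose $(r_1,c_1),(r_2,c_2)\in R$ with $r_1<r_2$. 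One checks that $w(r_1), w(r_2)$ together with the values $i$ and $\max(c_1,c_2)$ (or suitable chosen values realized by pipes passing between) produce, via Lemma~\ref{lem:Rothe pipe dream and pattern avoidence}, one of the forbidden sub-diagrams $D(1432), D(1342), D(12534), D(21534)$ or $D(31524)$: the point $(w^{-1}(i),i)$ gives the small leftmost entry, and the two rows $r_1,r_2$ together with the column structure of $c_1,c_2$ and the necessarily-present pipes crossing that strip force a length-$4$ or length-$5$ pattern. I expect the bookkeeping here — enumerating the relative positions of the right-ends $w^{-1}(c_1), w^{-1}(c_2)$ and of any intervening pipe's right-end — to be the main obstacle, and it is where the split into several of the six patterns appears.

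Granting that $R$ lies in one row, say row $r$, the next step is to locate $r$. Since $(r,i+1)$ cannot be an empty box below pipe $i$ with a gap, and pipe $i$ itself occupies column $i$ up through row $w^{-1}(i)$ then turns right, the only way to have an empty box in column $>i$ strictly below row $w^{-1}(i)$ without creating a forbidden configuration in an intermediate row is to have it immediately below, i.e. $r=w^{-1}(i)+1$; any box two or more rows below would, paired with the box in row $r$ just argued to exist, again be a two-row configuration contradicting the previous paragraph (here one uses $k\ge 2$ crucially).

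Then I would show the boxes are consecutive columns $i+1,\dots,i+k$. If $(r,c)\in R$ but $(r,c')\notin R$ for some $i<c'<c$, then $(r,c')$ is not empty: it is a vertical tile, so the pipe through column $c'$ in row $r$, call it pipe $j$ with $w^{-1}(j)>r=w^{-1}(i)+1$, has its right-end either above row $w^{-1}(i)$ or between rows $w^{-1}(i)$ and $r$ — exactly the two sub-cases in Claim~2 of Theorem~\ref{thm:Necessity} — and each yields a $31524$ or a $1342$ (or, combined with the leftmost entry from pipe $i$, a $12534$), contradicting avoidance. Since $(r,i+1)$ must be in $R$ (else pipe $i$ could not have a second empty box with anything strictly to its right in row $r$ by the consecutiveness just shown, contradicting $k\ge2$), the boxes occupy exactly columns $i+1,\dots,i+k$ in row $w^{-1}(i)+1$.

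Finally, for the uniqueness claim: a pipe $i'\ne i$ lying to the northwest of these $k$ boxes would have its dot at some $(w^{-1}(i'),i')$ with $w^{-1}(i')\le w^{-1}(i)+1$ and $i'\le i+k$, $i'\ne i$; if $i'<i$ then $i'$ sits left of pipe $i$ in a higher or equal row and, tracking its right-end against the strip, one finds a third entry completing a forbidden length-$5$ pattern with pipe $i$ and two of the $R$-boxes, while if $i<i'\le i+k$ then column $i'$ in row $w^{-1}(i)+1$ would not be an empty box, contradicting what was just proved. Hence pipe $i$ is the unique such pipe, completing the proof. I would package the repeated ``read off a forbidden pattern from three marked cells of $D(w)$'' steps through Lemma~\ref{lem:Rothe pipe dream and pattern avoidence} so as not to redo the pattern-matching each time.
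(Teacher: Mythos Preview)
Your three-step outline matches the paper's, but Step~1 has a genuine gap: you list $D(1432),D(1342),D(12534),D(21534),D(31524)$ as the possible forced sub-diagrams and omit $13254$. That pattern is not optional. The paper's Step~1 splits into exactly three sub-cases on the relative position of two boxes $(p_1,q_1),(p_2,q_2)$ in distinct rows southeast of pipe $i$: same column forces $1342$; one box strictly northeast of the other forces $1432$ (the third corner is then automatically in $D(w)$); and one box strictly \emph{southeast} of the other forces precisely $13254$. Only three of the six patterns appear here, and $13254$ is indispensable for the last sub-case. For a witness that your list cannot suffice, take $w=13254$ itself: it avoids all five patterns you name, yet $(2,2)$ and $(4,4)$ both lie in $D(w)$ southeast of pipe $1$, in distinct rows. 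So the obstacle is not ``bookkeeping'' --- one of the six patterns is simply missing from your case analysis.

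Your Step~2 also does not go through as written. After Step~1 every box sits in a single row $r$; you then say a box ``two or more rows below would, paired with the box in row $r$ just argued to exist, again be a two-row configuration,'' but there is no second row to pair with once all boxes already lie in row $r$, so the appeal to Step~1 is vacuous. The paper's argument is different: if $r>w^{-1}(i)+1$, then $(w^{-1}(i)+1,q_1)\notin D(w)$ (where $q_1$ is any box column), which forces the pipe $i':=w(w^{-1}(i)+1)$ to satisfy $i'<q_1$; hence the $k\ge 2$ boxes also lie southeast of pipe $i'$, and comparing $i'$ with $i$ yields $12534$ or $21534$. The same device pins the leftmost column to $i+1$. (A smaller point: in Step~3 you write $w^{-1}(j)>r$ for the pipe through the vertical tile at $(r,c')$, but a vertical tile there means $w^{-1}(c')<r$; your subsequent clause about the right-end lying above row $w^{-1}(i)$ suggests you intended the correct inequality. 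The paper reads off $31524$ directly from that single configuration, with no sub-cases.)
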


\begin{proof}
Firstly, we show that the $k$ empty boxes in the southeast region of any pipe $i$ must be in the same row.  	
Suppose to the contrary that there exist two empty boxes at $(p_1,q_1),(p_2,q_2)$ with $p_1\neq p_2$. There are three cases depending on the relative positions of $(p_1,q_1)$ and $(p_2,q_2)$, see Figure \ref{fig:Two empty boxes in different rows} for an illustration.

If $q_1=q_2$, then by Lemma \ref{lem:Rothe pipe dream and pattern avoidence}, $D(w)$ contains $D(1342)$. If $p_2<p_1$ and  $q_1<q_2$, then $D(w)$ contains $D(1432)$. If $p_1<p_2$ and  $q_1<q_2$, then $D(w)$ contains $D(13254)$. In conclusion, if there are two empty boxes in different rows, then $w$ contains one of the patterns $1432$, $1342$ and $13254$, a contradiction.

	\begin{figure}[ht]
		\centering
  		\begin{minipage}{0.23\textwidth}
			\begin{tikzpicture}[scale=0.4]
			\draw[very thin] (0,0) rectangle (5,7); 
			\draw[fill=lightgray](2,1) rectangle (3,2);
			\draw[fill=lightgray](2,4) rectangle (3,5);
			\draw[thick, blue](0.5,0)--(0.5,6.5)--(5,6.5);
			\node[below] at (0.5,0) {$i$};
			\node[below] at (2.5,0) {$q_1$};
			\node[right] at (5,6.5) {\footnotesize$w^{-1}(i)$};
			\node[right] at (5,4.5) {$p_2$};
			\node[right] at (5,1.5) {$p_1$};
			\end{tikzpicture}
			\subcaption{$q_1=q_2$}
		\end{minipage}
  \quad
		\begin{minipage}{0.3\textwidth}
			\begin{tikzpicture}[scale=0.4]
			\draw[very thin] (0,0) rectangle (7,7); 
			\draw[fill=lightgray](2,1) rectangle (3,2);
			\draw[fill=lightgray](5,4) rectangle (6,5);
			\draw[fill=lightgray](2,4) rectangle (3,5);
			\draw[thick, blue](0.5,0)--(0.5,6.5)--(7,6.5);
			\node[below] at (0.5,0) {$i$};
			\node[below] at (2.5,0) {$q_1$};
			\node[below] at (5.5,0) {$q_2$};	
			\node[right] at (7,6.5) {\footnotesize$w^{-1}(i)$};
			\node[right] at (7,4.5) {$p_2$};
			\node[right] at (7,1.5) {$p_1$};
			\end{tikzpicture}
			\subcaption{$p_2< p_1,q_1<q_2$}
		\end{minipage}
	\quad
		\begin{minipage}{0.3\textwidth}
			\begin{tikzpicture}[scale=0.4]
			\draw[very thin] (0,0) rectangle (7,7);
			\draw[fill=lightgray](5,1) rectangle (6,2);
			\draw[fill=lightgray](2,4) rectangle (3,5);
			\draw[thick, blue](0.5,0)--(0.5,6.5)--(7,6.5);
			\node[below] at (0.5,0) {$i$};
			\node[below] at (2.5,0) {$q_1$};
			\node[below] at (5.5,0) {$q_2$};	
			\node[right] at (7,6.5) {\footnotesize$w^{-1}(i)$};
			\node[right] at (7,4.5) {$p_1$};
			\node[right] at (7,1.5) {$p_2$};
			\end{tikzpicture}
			\subcaption{$p_1< p_2,q_1<q_2$}
		\end{minipage}
		\caption{Two empty boxes in different rows}
		\label{fig:Two empty boxes in different rows}
	\end{figure}
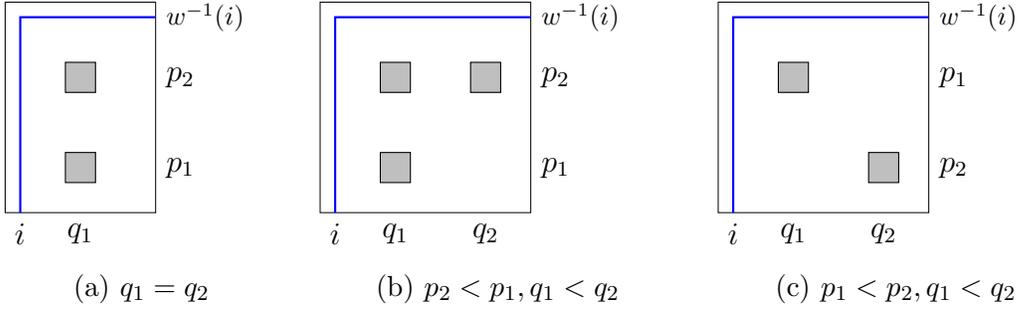
	
Secondly, we show that the $k$ empty boxes in the southeast region of pipe $i$ are all in row $w^{-1}(i)+1$ and the leftmost box is in column $i+1$.  
Since all the $k$ boxes are in the same row, if they are not in row $w^{-1}(i)+1$, then the row $w^{-1}(i)+1$ has no empty boxes to the southeast region of pipe $i$. That is, for all $j>i$, $(w^{-1}(i)+1,j)\notin D(w)$. Then there exists a pipe $i'$ with $w^{-1}(i')=w^{-1}(i)+1$ and the $k$ empty boxes are to the southeast of pipe $i'$, as shown in Figure \ref{fig:k boxes not in row w^{-1}(i)+1}. If $i'<i$, then by Lemma \ref{lem:Rothe pipe dream and pattern avoidence}, $D(w)$ contains $D(21534)$. If $i'>i$, then $D(w)$ contains $D(12534)$.

If the leftmost empty box is not in column $i+1$, then the $k$ empty boxes are to the southeast of the pipe $i+1$, and $D(w)$   is similar to Figure \ref{fig:21534 with i'<i}, which forces $w$ to contain the pattern  $21534$.

	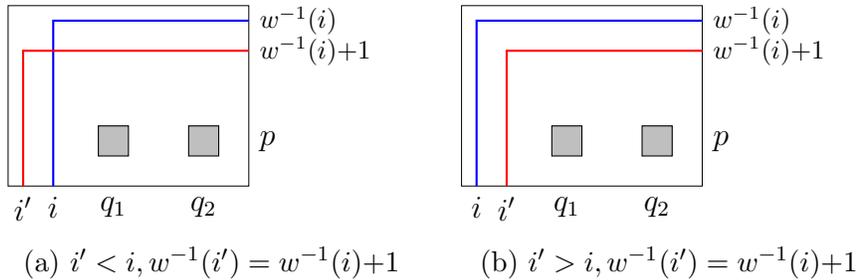
\begin{figure}[ht]
		\centering
		\begin{minipage}{0.35\textwidth}
			\begin{tikzpicture}[scale=0.4]
			\draw[very thin] (0,0) rectangle (8,6); 
			\draw[fill=lightgray](6,1) rectangle (7,2);
			\draw[fill=lightgray](3,1) rectangle (4,2);
			\draw[thick, blue](1.5,0)--(1.5,5.5)--(8,5.5);
			\draw[thick, red](0.5,0)--(0.5,4.5)--(8,4.5);
			\node[below] at (0.5,0) {$i'$};
			\node[below] at (1.5,0) {$i$};
			\node[below] at (3.5,0) {$q_1$};
			\node[below] at (6.5,0) {$q_2$};	
			\node[right] at (8,5.5) {\footnotesize$w^{-1}(i)$};
			\node[right] at (8,4.5) {\footnotesize{$w^{-1}(i)$+$1$}};
			\node[right] at (8,1.5) {$p$};
			\end{tikzpicture}
			\subcaption{$i'<i, w^{-1}(i')=w^{-1}(i)$+$1$}
            \label{fig:21534 with i'<i}
		\end{minipage}
		\quad
		\begin{minipage}{0.35\textwidth}
			\begin{tikzpicture}[scale=0.4]
			\draw[very thin] (0,0) rectangle (8,6); 
			\draw[fill=lightgray](6,1) rectangle (7,2);
			\draw[fill=lightgray](3,1) rectangle (4,2);
			\draw[thick, blue](0.5,0)--(0.5,5.5)--(8,5.5);
			\draw[thick, red](1.5,0)--(1.5,4.5)--(8,4.5);
			\node[below] at (1.5,0) {$i'$};
			\node[below] at (0.5,0) {$i$};
			\node[below] at (3.5,0) {$q_1$};
			\node[below] at (6.5,0) {$q_2$};	
			\node[right] at (8,5.5) {\footnotesize$w^{-1}(i)$};
			\node[right] at (8,4.5) {\footnotesize{$w^{-1}(i)$+$1$}};
			\node[right] at (8,1.5) {$p$};
			\end{tikzpicture}
			\subcaption{$i'>i, w^{-1}(i')=w^{-1}(i)$+$1$}
		\end{minipage}
		\caption{$k$  empty boxes not in row $w^{-1}(i)$+$1$}
		\label{fig:k boxes not in row w^{-1}(i)+1}
	\end{figure}
	
Finally,  we show that the $k$ empty boxes are consecutively arranged from column $i+1$ to column $i+k$. 
Suppose $(w^{-1}(i)+1,q_1),(w^{-1}(i)+1,q_2)\in D(w)$ with $q_2-q_1>1$ and no other empty boxes between them, as shown in Figure \ref{fig:k boxes not consecutive}. Then there is a pipe $i'$  with $q_1<i'<q_2$ and $w^{-1}(i')<w^{-1}(i)$, which implies $D(w)$ contains $D(31524)$. 
This completes the proof.
\end{proof}	
	
	\begin{figure}[ht]
		\centering
		\begin{tikzpicture}[scale=0.5]
		\draw[very thin] (0,0) rectangle (6,6); 
		\draw[fill=lightgray](1,2) rectangle (2,3);
  	\draw[fill=lightgray](2,2) rectangle (3,3);
		\draw[fill=lightgray](4,2) rectangle (5,3);
		\draw[thick, red](3.5,0)--(3.5,5.5)--(6,5.5);
		\draw[thick, blue](0.5,0)--(0.5,3.5)--(6,3.5);
		\node[below] at (0.5,0) {$i$};
		\node[below] at (3.5,0) {$i'$};
		\node[below] at (2.5,0) {$q_1$};
		\node[below] at (4.5,0) {$q_2$};	
		\node[right] at (6,5.5) {\footnotesize$w^{-1}(i')$};
		\node[right] at (6,3.5) {\footnotesize$w^{-1}(i)$};
		\node[right] at (6,2.5) {\footnotesize{$w^{-1}(i)$+$1$}};
		\end{tikzpicture}
		\caption{Non-consecutive $k$ boxes in row $w^{-1}(i)$+$1$}
		\label{fig:k boxes not consecutive}
	\end{figure}
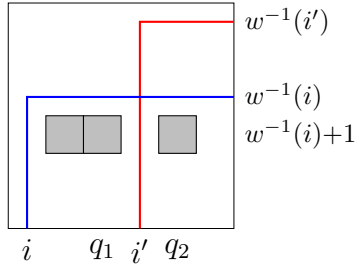

\begin{prop}\label{lem:factorization of Grothendieck polynomial when avoids 6 patterns}
Assume that $w$ avoids the patterns $1432$, $1342$, $13254$, $31524$, $12534$ and $21534$. Denote $\lambda$ by the northwest partition in $D(w)$. Then $\widetilde{\G}_{w}(x)$ is the product of $x^{\lambda}$ and a sequence of factors.  Further, the variables of each factor are disjoint. In other words, the variable $x_k$ appears in at most one factor for each $k$. 
\end{prop}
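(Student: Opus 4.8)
The plan is to combine the bumpless pipe dream model --- Proposition \ref{prop:Anna BPD can be obtained by droops} together with formula \eqref{eq:singG} --- with the structural description of $D(w)$ given by Proposition \ref{prop:description of the Rothe pipe dream when avoiding 6 patterns}. Via \eqref{eq:singG}, $\widetilde{\G}_w(x)$ is a sum over $\BPD(w)$ whose summand for $\mathcal{P}$ is $(-1)^{|B(\mathcal{P})|-\ell(w)}$ times a product over the rows of the grid, the contribution of row $p$ depending only on the tiles of $\mathcal{P}$ in that row (each empty box in row $p$ contributing a factor $x_p$, each NW elbow --- or bump, counted as both an SE and an NW elbow --- contributing a factor $1+x_p$). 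The aim is to prove that, when $w$ avoids the six patterns, $\BPD(w)$ is a ``product'' of independent local pieces. By Proposition \ref{prop:description of the Rothe pipe dream when avoiding 6 patterns}, the diagram $D(w)$ splits into its dominant part --- the northwest partition $\lambda$ --- and a family of ``gadgets'', each being either a single empty box southeast of some pipe or a consecutive horizontal strip of $k\ge 2$ empty boxes in row $w^{-1}(i)+1$ and columns $i+1,\dots,i+k$, just below and to the right of some pipe $i$; and one wants every $\mathcal{P}\in\BPD(w)$ to arise by independently choosing, inside each gadget, a configuration reachable from $D(w)$ by droops and $K$-theoretic droops, all rows meeting no gadget being inert. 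Granting this, restricting the row-by-row weight to each gadget makes the sum over $\BPD(w)$ factor as $x^\lambda$ times $\prod_s F_s(x)$, where $F_s$ is the generating polynomial of the $s$-th gadget and involves only the variables indexed by the rows meeting it; since distinct gadgets occupy disjoint sets of rows, the variables of the factors $F_s$ are disjoint.

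Two points must be checked. First, that $x^\lambda$ divides every term of $\widetilde{\G}_w(x)$, which amounts to showing that the dominant boxes of $D(w)$ (those $(i,j)$ with $[1,i]\times[1,j]\subseteq D(w)$, making up $\lambda$) remain empty in every $\mathcal{P}\in\BPD(w)$. A dominant box can turn into an NW elbow or bump only through a move whose SE elbow lies strictly to its northwest with an elbow-free rectangle in between; but in $D(w)$ every position strictly northwest of a dominant box is an empty box rather than an SE elbow, and, invoking Proposition \ref{prop:description of the Rothe pipe dream when avoiding 6 patterns}, no droop or $K$-theoretic droop ever creates a new SE elbow strictly northwest of a dominant box, so induction on the number of moves from $D(w)$ settles this. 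Second, that droop activity is confined to single gadgets. For a pipe $i$ with $k\ge 2$ empty boxes to its southeast this is immediate from Proposition \ref{prop:description of the Rothe pipe dream when avoiding 6 patterns}: the boxes form the strip described above, pipe $i$ is the unique pipe to its northwest, and drooping it only rearranges tiles in rows $w^{-1}(i),\,w^{-1}(i)+1$ and columns $i,\dots,i+k$. A similar but more delicate analysis, using the two cases of Figure \ref{fig:$K$-theoretic droop}, treats a pipe with a single empty box to its southeast and the $K$-theoretic droops --- which involve two pipes and can create new crossings, hence new SE and NW elbows. One then takes the gadgets to be the connected components of these local regions, shows that distinct gadgets occupy disjoint sets of rows (and columns), and shows that every permissible sequence of moves from $D(w)$ decomposes into independent sequences internal to the individual gadgets.

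The crux is this confinement-and-disjointness statement; here the pattern avoidance enters essentially, because the configurations in which two gadgets would share a row, or in which a move would straddle two gadgets, are precisely the ones that force one of the six forbidden patterns, as one sees through Proposition \ref{prop:description of the Rothe pipe dream when avoiding 6 patterns} and Lemma \ref{lem:Rothe pipe dream and pattern avoidence}. Carrying this out is a finite but careful case analysis over the local pictures permitted by Proposition \ref{prop:description of the Rothe pipe dream when avoiding 6 patterns}, with special care for $K$-theoretic droops and for the new droopable configurations that can appear after a move. Once confinement is established, $\BPD(w)$ is literally the Cartesian product of its gadget-restrictions, the row-by-row form of the weight turns the sum into a product over gadgets, and the factorization $\widetilde{\G}_w(x)=x^\lambda\prod_s F_s(x)$ with pairwise variable-disjoint factors follows at once.
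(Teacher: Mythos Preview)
Your proposal is correct and follows essentially the same route as the paper: use Proposition~\ref{prop:description of the Rothe pipe dream when avoiding 6 patterns} to decompose $D(w)$ into the dominant partition $\lambda$ plus independent local ``gadgets'' lying in disjoint rows and columns, argue that all droops and $K$-theoretic droops are confined to individual gadgets, and conclude that $\widetilde{\G}_w(x)$ factors as $x^\lambda$ times one factor per gadget with disjoint variable sets. The only notable difference is one of presentation: the paper explicitly names the two possible gadget types --- local structure~$A$ (a horizontal strip of $k\ge 2$ boxes below a single pipe) and local structure~$B$ (a single box southeast of a staircase of pipes forming a dominant sub-permutation) --- and asserts their anti-diagonal, row-and-column-disjoint arrangement along the boundary of $\lambda$, whereas you leave the single-box case as an unspecified ``more delicate analysis'' over connected components; identifying structure~$B$ concretely is what makes the confinement of $K$-theoretic droops transparent, so you would benefit from making that explicit.
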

\begin{proof}
    Suppose the empty boxes in the northwest corner of $D(w)$ consist of a partition $\lambda$. Since no droops can be applied to the empty boxes in the northwest corner,  $x^{\lambda}$ is a common divisor of every monomial in $\widetilde{\G}_{w}(x)$. 

Since the empty boxes in each SE elbow satisfy Proposition \ref{prop:description of the Rothe pipe dream when avoiding 6 patterns} when $w$ avoids the six patterns, there are only two kinds of local structures on which we can apply droop operations. More precisely, if there is more than one box in the southeast region of a pipe, then $D(w)$ has local structure $A$  as in Figure \ref{fig:structure1}. If there is only one box to the southeast of a pipe, then $D(w)$ has local structure $B$ as in Figure \ref{fig:structure2}.
Note that in local structure $B$, we obtain a dominant permutation (i.e., 132-avoiding) by restricting $w$ to the pipes $j,\ldots,j+l-1$. 
Besides, there are no other boxes in the southeast region of the pipes $j,\ldots,j+l-1$.

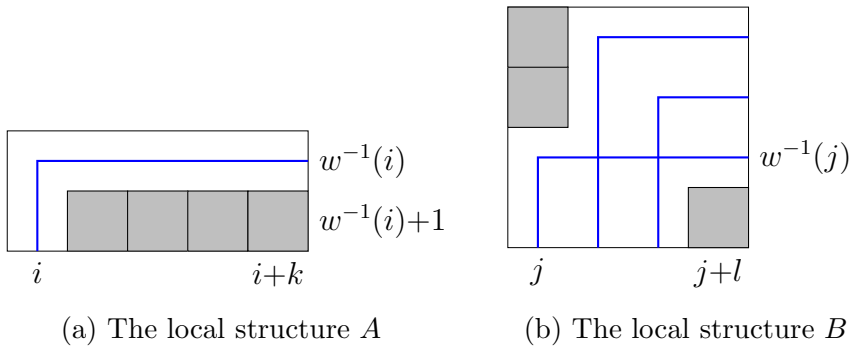
\begin{figure}[ht]
\centering
\begin{minipage}[t]{0.36\textwidth}
\begin{tikzpicture}[scale=0.8]
\draw[very thin](0,0) rectangle (5,2);
			\draw[blue,thick](0.5,0)--(0.5,1.5)--(5,1.5);
			\draw[fill=lightgray](1,0) rectangle (2,1);
			\draw[fill=lightgray](2,0) rectangle (3,1);
			\draw[fill=lightgray](3,0) rectangle (4,1);
			\draw[fill=lightgray](4,0) rectangle (5,1);
			\node[below] at (0.5,0){$i$};
			\node[below] at (4.5,0){$i$+$k$};
			\node[right] at (5,0.5){$w^{-1}(i)$+$1$};
			\node[right] at (5,1.5){$w^{-1}(i)$};
			\end{tikzpicture}
			\subcaption{The local structure $A$}
			\label{fig:structure1}
		\end{minipage}
\qquad
       \begin{minipage}[t]{0.3\textwidth}
			\begin{tikzpicture}[scale=0.8]
			\draw[very thin](0,0) rectangle (4,4);
			\draw[blue,thick](0.5,0)--(0.5,1.5)--(4,1.5);
			\draw[blue,thick](1.5,0)--(1.5,3.5)--(4,3.5);
			\draw[blue,thick](2.5,0)--(2.5,2.5)--(4,2.5);
			\draw[fill=lightgray](3,0) rectangle (4,1);
            \draw[fill=lightgray](0,2) rectangle (1,3);
            \draw[fill=lightgray](0,3) rectangle (1,4);
			\node[below] at (0.5,0) {$j$};
			\node[below] at (3.5,0) {$j$+$l$};
            \node[right] at (4,1.5) {$w^{-1}(j)$};
			\end{tikzpicture}
			\subcaption{The local structure $B$}
			\label{fig:structure2}
		\end{minipage}
		\caption{Two local structures}
		\label{fig:two local structures can apply droops}
\end{figure}

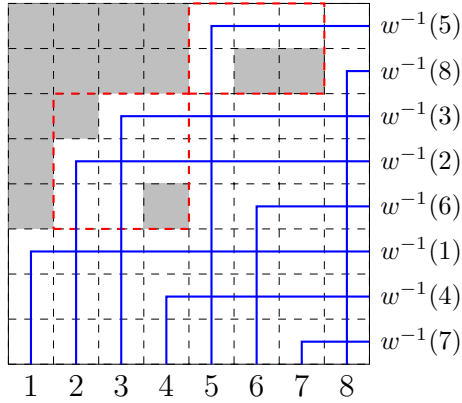
\begin{figure}[ht]
		\centering
		\begin{tikzpicture}[scale=0.6]
		\draw[very thin] (0,0) rectangle (8,8); 
        \draw[draw=none, fill=lightgray] (5,6) rectangle (7,7);
        \draw[draw=none, fill=lightgray] (3,3) rectangle (4,4);
        \draw[draw=none, fill=lightgray] (0,3)--(0,8)--(4,8)--(4,6)--(2,6)--(2,5)--(1,5)--(1,3)--(0,3);
		\draw[very thin, dashed, step=1] (0,0) grid (8,8); 
		\draw[thick,dashed,red](1,3) rectangle (4,6);
		\draw[thick,dashed,red](4,6) rectangle (7,8);

		\draw[thick, blue](0.5,0)--(0.5,2.5)--(8,2.5);
		\draw[thick, blue](1.5,0)--(1.5,4.5)--(8,4.5);
		\draw[thick, blue](2.5,0)--(2.5,5.5)--(8,5.5);
		\draw[thick, blue](3.5,0)--(3.5,1.5)--(8,1.5);
		\draw[thick, blue](4.5,0)--(4.5,7.5)--(8,7.5);
		\draw[thick, blue](5.5,0)--(5.5,3.5)--(8,3.5);
		\draw[thick, blue](6.5,0)--(6.5,0.5)--(8,0.5);
		\draw[thick, blue](7.5,0)--(7.5,6.5)--(8,6.5);
		\foreach \x [evaluate=\x as \label using int(\x)] in {1,2,...,8} {
			\node[below] at (\x-0.5,0) {\label};
		}
		
		\node [right] at (8,7.5){\footnotesize$w^{-1}(5)$};
		\node [right] at (8,6.5){\footnotesize$w^{-1}(8)$};
		\node [right] at (8,5.5){\footnotesize$w^{-1}(3)$};
		\node [right] at (8,4.5){\footnotesize$w^{-1}(2)$};
		\node [right] at (8,3.5){\footnotesize$w^{-1}(6)$};
		\node [right] at (8,2.5){\footnotesize$w^{-1}(1)$};
		\node [right] at (8,1.5){\footnotesize$w^{-1}(4)$};
		\node [right] at (8,0.5){\footnotesize$w^{-1}(7)$};
		\end{tikzpicture}
		\caption{$D(w)$ for $w=58326147$}
		\label{fig:Rothe pipe dream of 58326147}
	\end{figure}

Moreover, by Proposition \ref{prop:description of the Rothe pipe dream when avoiding 6 patterns}, the two kinds of local structures must lie in disjoint rows and columns, and arranged anti-diagonally along the southeast boundary of the partition $\lambda$. For an illustration, see Figure \ref{fig:Rothe pipe dream of 58326147} for $w=58326147$, two local structures are bounded by red dashed lines. Thus the droop operations on each local structure are independent of each other. Each local structure with its droop operations contributes to a factor of $\widetilde{\G}_{w}(x)$. The variables of each factor are disjoint since these local structures and their droop operations are in disjoint rows. 
\end{proof}

\begin{thm}\label{thm:factorization of Grothendieck polynomial when avoiding 6 patterns}
Assume that  $w$ avoids patterns $1432$, $1342$, $13254$, $31524$, $12534$ and $21534$. Denote $\lambda$ by the northwest partition in $D(w)$.  Then $\widetilde{\G}_{w}(x)$ is the product of $x^{\lambda}$ and factors in the following forms: 
\begin{align}
\widetilde{F}_k(x)&=\sum_{t=0}^{k}x_{p}^{t}x_{p+1}^{k-t}+\sum_{s=1}^{k}x_{p}^{s}x_{p+1}^{k+1-s},\label{eq:F factor}\\
\widetilde{G}_l(x)&=\sum_{s=1}^{l+1}e_{s}(x_r,\ldots,x_{r+l}).\label{eq:G factor}
\end{align}
Moreover, the variables in each factor are disjoint. 
\end{thm}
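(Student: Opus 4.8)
The plan is to build directly on the factorization already obtained in Proposition~\ref{lem:factorization of Grothendieck polynomial when avoids 6 patterns}. That proposition shows that, for $w$ avoiding the six patterns, $\widetilde{\G}_w(x)=x^{\lambda}\cdot\prod_m f_m$, where $\lambda$ is the northwest partition of $D(w)$ and each $f_m$ is the polynomial obtained by summing $\prod_{(a,b)\in B(\mathcal{P})}x_a\prod_{(c,d)\in U(\mathcal{P})}(1-x_c)$ over the bumpless pipe dreams $\mathcal{P}$ reachable by droops confined to the $m$-th local structure, and the $f_m$ live in pairwise disjoint sets of rows. Since the degree shifts coming from the various local structures simply add, the normalization $\widetilde{(\cdot)}$ distributes over the product; so it suffices to identify, for each local structure of type $A$ (Figure~\ref{fig:structure1}) and of type $B$ (Figure~\ref{fig:structure2}), the corresponding normalized factor as $\widetilde{F}_k$, respectively $\widetilde{G}_l$, in the appropriate variables.

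For a local structure of type $A$, let $i$ be the relevant pipe; by Proposition~\ref{prop:description of the Rothe pipe dream when avoiding 6 patterns} its $k\ (\ge 2)$ empty boxes are exactly $(w^{-1}(i)+1,i+1),\dots,(w^{-1}(i)+1,i+k)$, and pipe $i$ is the only pipe lying to the northwest of them. I would enumerate all admissible pipe dreams: besides the Rothe one, which puts $k$ empty boxes in row $w^{-1}(i)+1$ and contributes $x_{w^{-1}(i)+1}^{k}$, the only droops available send pipe $i$ into the box $(w^{-1}(i)+1,i+t)$ for some $t\in\{1,\dots,k\}$; such a droop leaves $t$ empty boxes in row $w^{-1}(i)$, $k-t$ empty boxes in row $w^{-1}(i)+1$, and one NW elbow in row $w^{-1}(i)+1$, hence contributes $x_p^{t}x_{p+1}^{k-t}(1-x_{p+1})$ with $p=w^{-1}(i)$. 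One checks that afterwards no further droop and no $K$-theoretic droop is permissible within the local structure, because the newly created NW elbow obstructs every larger rectangle and only pipe $i$ is present. Summing the $k+1$ contributions and flipping the sign of the degree-$(k+1)$ part (which is what passing to $\widetilde{\G}$ does) produces precisely $\widetilde{F}_k(x)$ via \eqref{eq:singG}.

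For a local structure of type $B$, with a dominant block on pipes $j,\dots,j+l-1$ and a single empty box as in Figure~\ref{fig:structure2}, my approach is to note that the dominant ``frame'' of the block admits no droops and only contributes part of $x^{\lambda}$; discarding it, the configuration of the lone box together with the $l$ staircase pipes is combinatorially the Rothe pipe dream of the simple transposition $s_{l+1}$, and the two sets of admissible droops coincide. Consequently this factor equals $\widetilde{\G}_{s_{l+1}}$ evaluated in the variables $x_r,\dots,x_{r+l}$ indexing the $l+1$ consecutive rows occupied by the block; since $\G_{s_{l+1}}=1-\prod_{m=0}^{l}(1-x_{r+m})=\sum_{s=1}^{l+1}(-1)^{s-1}e_s(x_r,\dots,x_{r+l})$, normalization gives $\widetilde{G}_l(x)=\sum_{s=1}^{l+1}e_s(x_r,\dots,x_{r+l})$. (Alternatively one can enumerate the admissible pipe dreams of the block directly, by induction on $l$.) Finally, the pairwise disjointness of the variables of the factors is inherited from Proposition~\ref{lem:factorization of Grothendieck polynomial when avoids 6 patterns}, since the local structures occupy disjoint rows.

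I expect the main obstacle to be the type-$B$ step: carefully verifying that the droop-reachable pipe dreams of a type-$B$ block correspond to those of $s_{l+1}$ (equivalently, that their generating function from \eqref{eq:singG} is $\sum_{s=1}^{l+1}e_s$), because there, in contrast to type $A$, droops cascade up the staircase and genuine $K$-theoretic droops occur. The type-$A$ enumeration and the bookkeeping of the sign normalization should be routine once the relevant local pipe dreams are listed.
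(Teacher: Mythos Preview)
Your plan matches the paper's almost exactly. Both build on Proposition~\ref{lem:factorization of Grothendieck polynomial when avoids 6 patterns} and then identify each local factor; for type~$A$ the paper performs the identical enumeration with $p=w^{-1}(i)$, summing $x_{p+1}^{k}+(1+x_{p+1})\sum_{t=1}^{k}x_p^{t}x_{p+1}^{k-t}$ to obtain $\widetilde{F}_k$.

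For type~$B$ the routes diverge slightly. The paper first invokes \cite[Proposition~3.9]{MészárosSetiabrataDizier2021} to strip the dominant partition $\mu$ inside the $B$-block, replacing $w$ by $\widehat{w}$ and the block by the pure staircase $B'$ of Figure~\ref{fig:Rearrangement of structure $B$}; it then evaluates $B'$ directly via the telescoping sum
\[
x_{r+l}+x_{r+l-1}(1+x_{r+l})+\cdots+x_r\prod_{m=1}^{l}(1+x_{r+m})=\sum_{s=1}^{l+1}e_s(x_r,\dots,x_{r+l}).
\]
Your alternative --- absorbing the dominant frame into $x^{\lambda}$ and recognising the remainder as $\widetilde{\G}_{s_{l+1}}$ in the shifted variables --- reaches the same endpoint with a cleaner punchline, though you still need one line explaining why that frame lies inside $\lambda$ (or you could cite the same external result). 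One correction to your expectations: the paper's direct computation of $B'$ shows that \emph{no} $K$-theoretic droops occur there --- the lone empty box simply walks northwest up the diagonal via ordinary droops --- so the obstacle you anticipate is not one.
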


\begin{proof}
We compute the explicit expressions of the factors given by the two local structures in Figure \ref{fig:two local structures can apply droops}. For the local structure $A$, we can droop the SE elbow at $(w^{-1}(i),i)$ to each of the $k$ empty boxes in row $w^{-1}(i)+1$. Let $p=w^{-1}(i)$. Then the polynomial generated by the local structure $A$ is  
\begin{align*} 
\widetilde{F}_{k}(x)&=x_{p+1}^{k}+(1+x_{p+1})\left(\sum_{t=1}^{k}x_{p}^{t}x_{p+1}^{k-t}\right)\nonumber\\
	&=\sum_{t=0}^{k}x_{p}^{t}x_{p+1}^{k-t}+\sum_{s=1}^{k}x_{p}^{s}x_{p+1}^{k+1-s}.
\end{align*}

For the local structure $B$, let $\widehat{w}$ be the permutation obtained from $w$ by rearranging $w^{-1}(j),\ldots,w^{-1}(j+l-1)$ increasingly. 
By  \cite[Proposition 3.9]{MészárosSetiabrataDizier2021}, 
\begin{align}\label{doubledominat}
  \widetilde{\G}_w(x)=x^{\mu}\cdot\widetilde{\G}_{\widehat{w}}(x),  
\end{align}
where $\mu$ is the partition in the northwest corner of the local structure $B$.  
Thus we only need to compute the polynomial generated by the local structure $B'$ in Figure \ref{fig:Rearrangement of structure $B$}. 
For the running example $w=58326147$ in Figure \ref{fig:Rothe pipe dream of 58326147}, we have $j=l=2$, and so $\widehat{w}=58236147$. 

\begin{figure}[ht]
	\centering
        \begin{minipage}{0.3\textwidth}
			\begin{tikzpicture}[scale=0.8]
			\draw[very thin](0,0) rectangle (4,4);
			\draw[blue,thick](0.5,0)--(0.5,1.5)--(4,1.5);
			\draw[blue,thick](1.5,0)--(1.5,3.5)--(4,3.5);
			\draw[blue,thick](2.5,0)--(2.5,2.5)--(4,2.5);
			\draw[fill=lightgray](3,0) rectangle (4,1);
            \draw[fill=lightgray](0,2) rectangle (1,3);
            \draw[fill=lightgray](0,3) rectangle (1,4);
			\node[below] at (0.5,0) {$j$};
			\node[below] at (3.5,0) {$j$+$l$};
            \node[right] at (4,1.5) {$w^{-1}(j)$};
			\end{tikzpicture}
		\end{minipage}
$\rightarrow$
\quad
\begin{minipage}{0.3\textwidth}
    	\begin{tikzpicture}[scale=0.8]
	\draw[very thin](0,0) rectangle (4,4);
	\draw[blue,thick](0.5,0)--(0.5,3.5)--(4,3.5);
	\draw[blue,thick](1.5,0)--(1.5,2.5)--(4,2.5);
	\draw[blue,thick](2.5,0)--(2.5,1.5)--(4,1.5);
	\draw[fill=lightgray](3,0) rectangle (4,1);
	\node[below] at (0.5,0) {$j$};
	\node[below] at (3.5,0) {$j$+$l$};
	\node[right] at (4,3.5) {$\widehat{w}^{-1}(j)$};
	\node[right] at (4,0.5) {$\widehat{w}^{-1}(j)$+$l$};
	\end{tikzpicture}	
\end{minipage}
\caption{The local structure $B'$}
	\label{fig:Rearrangement of structure $B$}
\end{figure}
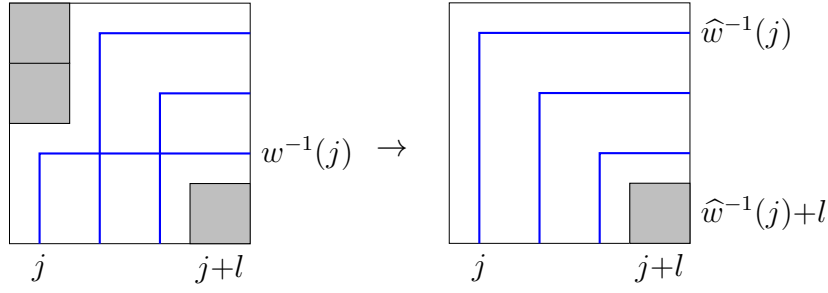

Assume that the local structure $B'$ is in the rectangle $[\widehat{w}^{-1}(j),\widehat{w}^{-1}(j)+l]\times[j,j+l]$. Let $r=\widehat{w}^{-1}(j)$. 
We claim that the polynomial generated by the local structure $B'$ is 
\begin{align*} 
\sum_{s=1}^{l+1}e_{s}(x_r,\ldots,x_{r+l}),
\end{align*}
where $e_{s}(x_r,\ldots,x_{r+l})$ is the degree $s$ elementary symmetric polynomial in variables $x_r,\ldots,$ $x_{r+l}$. 

To prove the claim, first droop  pipe $j+l-1$ to the empty box at $(r+l,j+l)$ and turn $(r+l-1,j+l-1)$ into a new empty box, this generates $x_{r+l-1}(1+x_{r+l})$. Then droop pipe $j+l-2$ to the new empty box and so on. In this process, the original empty box at $(r+l,j+l)$ moves northwest along the main diagonal and there are no $K$-theoretic droops. 
Therefore, all the monomials in the series of droops generated are
\[
x_{r+l}+x_{r+l-1}(1+x_{r+l})+\cdots+x_{r}(1+x_{r+1})\cdots (1+x_{r+l}),
\]
which equals $e_1(x_r,\ldots,x_{r+l})+\cdots+e_{l+1}(x_r,\ldots,x_{r+l})$, as desired.
\end{proof}

\begin{exmp}
For the running example $w=58326147$, the northwest partition is $\lambda=(4,4,2,1,1)$. One can check that $\widetilde{\G}_{58326147}(x)$ can be decomposed as follows: 
$$\widetilde{\G}_{58326147}(x)=x^{\lambda}(x_1^{2}+x_1x_2+x_2^{2}+x_1x_2^{2}+x_1^{2}x_2)(x_3+x_4+x_5+x_3x_4+x_4x_5+x_3x_5+x_3x_4x_5).$$
	The two factors are exactly the polynomials generated by the two local structures in the red dashed lines in Figure \ref{fig:Rothe pipe dream of 58326147}. 
\end{exmp}

\begin{proof}[Proof of Theorem \ref{thm:sufficiency}]
 By Theorem \ref{thm:factorization of Grothendieck polynomial when avoiding 6 patterns}, since the monomials in each factor of $\widetilde{\G}_{w}(x)$ are zero-one and variables in each factor are disjoint, we conclude that each monomial in $\widetilde{\G}_{w}(x)$ appears exactly once. 
\end{proof}

\section{Properties of zero-one Grothendieck polynomials}\label{sec:Properties of zero-one Grothendieck polynomials}

In this section, we apply the factorization of zero-one Grothendieck polynomials in Theorem \ref{thm:factorization of Grothendieck polynomial when avoiding 6 patterns} to prove  Theorem \ref{thm:conjectures of Lorentzian property2} and
Theorem  \ref{thm:conjectures of support and coefficient}.

\begin{proof}[Proof of Theorem \ref{thm:conjectures of Lorentzian property2}]
Recall that $\widetilde{\S}_{w}(x;y)$ is the lowest degree component of $\widetilde{\G}_{w}(x;y)$.   To obtain $\widetilde{\S}_{w}(x;y)$, by \eqref{eq:doubleG}, we only need to compute the weighted sum of BPDs with no two pipes cross more than once, and take the weight of an empty box at $(i,j)$ as $x_i+y_j$ and an NW elbow as $1$.

To obtain a factorization of $\widetilde{\S}_w(x;y)$, we need to explain that \eqref{doubledominat} also holds for double Schubert polynomials.  That is,
\[
\widetilde{\S}_w(x;y)=\widetilde{\S}_{\widehat{w}}(x;y)\cdot\prod_{(i,j)\in\mu}(x_i+y_j). 
\]
Notice that $\widetilde{\S}_w(x;y)$ can be generated by applying ladders moves to the bottom rc-graph of $w$, see \cite{FominKirillov1996} and \cite{BergeronBilley1993}. It is easy to see that the northwest partition $\mu$ does not affect the ladder moves.
Therefore,
  we have the factorization  $$\widetilde{\S}_{w}(x;y)=\prod_{(i,j)\in \lambda}(x_i+y_j)\prod_{k}\widetilde{F}_{k}(x;y)\prod_{l}\widetilde{G}_{l}(x;y),$$
where $\lambda$ is the partition in the northwest corner of $D(w)$, $\widetilde{F}_{k}(x;y)$ and $\widetilde{G}_{l}(x;y)$ are generated by the local structure $A$ and the local structure $B'$, respectively.

For the local structure $A$ in Figure \ref{fig:structure1}, 
it is easy to see that $\widetilde{F}_{k}(x;y)$ is eqaul to the double Schubert polynomial $\widetilde{\S}_u(x;y)$ with $u=s_{2}s_{3}\cdots s_{k+1}$ in variables $x_p,x_{p+1},y_i,\ldots,y_{i+k}$.
By \cite[eq. (6.3)]{Macdonald}, we have
\begin{align*}
\widetilde{F}_{k}(x;y)&=\sum_{u_1^{-1}u_2=u\atop \ell(u_1)+\ell(u_2)=\ell(u)}\S_{u_1}(x)\S_{u_2}(y)\\
  &=\sum_{t=0}^{k}h_{k-t}(x_{p},x_{p+1})e_{t}(y_i,\ldots,y_{i+k}).  
\end{align*}
It is obvious that the factor generated by the local structure $B'$ in Figure \ref{fig:Rearrangement of structure $B$} is 
$$\widetilde{G}_{l}(x;y)=x_{r}+\cdots+x_{r+l}+y_{j}+\cdots+y_{j+l}.$$

By \cite[Corollary 3.8]{PetterHuh2020}, if $N(f)$ and $N(g)$ are Lorentzian polynomials, then $N(fg)$ is a Lorentzian polynomial.  
It is easy to see that $N(\prod_{(i,j)\in \lambda}(x_i+y_j))$ and  $N(\widetilde{G}_{l}(x;y))$ are Lorentzian. In the following, we show that $N(\widetilde{F}_{k}(x;y))$ is Lorentzian. By \cite{CastilloCidRuizMohammadi2023}, double Schubert polynomials have SNP and whose Newton polytopes are generalized permutahedra, thus $\supp(\widetilde{F}_{k}(x;y))$ is M-convex.

Suppose that we take  $t$ derivatives on the $y$ variables, say, for example, on $y_{i},\ldots,y_{i+t-1}$. And take  $t_1$ and $t_2$ derivatives respectively on $x_p$ and $x_{p+1}$ with $t_1+t_2=k-2-t$. Then we have 
\begin{align*}
    &\left(\frac{\partial}{\partial x_{p}}\right)^{t_1} \left(\frac{\partial}{\partial x_{p+1}}\right)^{t_2}\frac{\partial}{\partial y_{i}}\cdots \frac{\partial}{\partial y_{i+t-1}}N(\widetilde{F}_{k}(x;y))\\
    &=\left(\frac{\partial}{\partial x_{p}}\right)^{t_1} \left(\frac{\partial}{\partial x_{p+1}}\right)^{t_2}\frac{\partial}{\partial y_{i}}\cdots \frac{\partial}{\partial y_{i+t-1}}\left(e_{t}(y)h_{k-t}(x)+e_{t+1}(y)h_{k-t-1}(x)+e_{t+2}(y)h_{k-t-2}(x)\right)\\
    &=\frac{1}{2}x_{p}^{2}+x_{p}x_{p+1}+\frac{1}{2}x_{p+1}^{2}+(x_{p}+x_{p+1})e_{1}(y_{i+t},\ldots,y_{i+k})+e_{2}(y_{i+t},\ldots,y_{i+k})\\
    		&=
		\begin{pmatrix}
		x_{p} &  x_{p+1} & y_{i+t} & \cdots & y_{i+k}
		\end{pmatrix}
		\begin{pmatrix}
		\frac{1}{2} & \frac{1}{2} & \cdots & \frac{1}{2} & \frac{1}{2}\\[5pt]
		\frac{1}{2} & \frac{1}{2} & \cdots & \frac{1}{2} & \frac{1}{2}\\[5pt]
		\vdots & \vdots &\ddots &\vdots & \vdots\\[5pt]
		\frac{1}{2} & \frac{1}{2} & \cdots & 0 & \frac{1}{2}\\[5pt]
		\frac{1}{2} & \frac{1}{2} & \cdots & \frac{1}{2} & 0\\
		\end{pmatrix}
		\begin{pmatrix}
		x_{p}\\[5pt]
            x_{p+1}\\[5pt]
            y_{i+t}\\[5pt]
		\vdots\\[5pt]
		y_{i+k}\\[5pt]
		\end{pmatrix}. 
    \end{align*}
The quadratic form is a $(k-t+3)\times (k-t+3)$ matrix, which has exactly $k-t+1$ zeros on the main diagonal except the first two $\frac{1}{2}$. One can check that the quadratic form  has characteristic polynomial
$$\lambda\left(\lambda+\frac{1}{2}\right)^{k-t}\left(\lambda^{2}-\frac{n-1}{2}\lambda-\frac{1}{2}\right),$$
which has exactly one positive eigenvalue. This completes the proof.
\end{proof}

\begin{proof}[Proof of Theorem \ref{thm:conjectures of support and coefficient}]
	Suppose that $w$ avoids the six patterns in Theorem \ref{thm:main theorem} and $\lambda$ is the northwest partition of  $D(w)$. Then $\widetilde{\G}_{w}(x)$ can be decomposed as 
	$$\widetilde{\G}_{w}(x)=x^{\lambda}\cdot \prod_{k} \widetilde{F}_{k}(x)\cdot \prod_{l}\widetilde{G}_{l}(x),$$
where $\widetilde{F}_{k}(x)$ and $\widetilde{G}_{l}(x)$ are factors in \eqref{eq:F factor} and \eqref{eq:G factor}    with disjoint variables.  
 
For Conjecture \ref{conj:support2}, suppose that $\alpha=\lambda+\sum_{k}\alpha_{k}+\sum_{l}\alpha_{l}\in \supp(\G_w)$ such that $|\alpha|< deg(\G_w)$, where $\alpha_{k}\in \supp(\widetilde{F}_{k}(x))$ and $\alpha_{l}\in \supp(\widetilde{G}_{l}(x))$. Then either $|\alpha_k|<deg(\widetilde{F}_{k}(x))$ or $|\alpha_l|<deg(\widetilde{G}_{l}(x))$.

If $|\alpha_k|<deg(\widetilde{F}_{k}(x))$, then $x^{\alpha_{k}}=x_{p}^{t}x_{p+1}^{k-t}$ for some $t$. Let $x^{\beta_{k}}$ be $x_{p}^{t+1}x_{p+1}^{k-t}$ if $t=0$, and $x_{p}^{t}x_{p+1}^{k-t+1}$ if $t>0$. Thus $\alpha_k<\beta_k$ and $|\beta_k|=|\alpha_k|+1$.
Similarly, if $|\alpha_l|<deg(\widetilde{G}_{l}(x))$, then it is easy to see there exists some  $\beta_{l}$ with $\alpha_{l}<\beta_{l}$ and $|\beta_{l}|=|\alpha_{l}|+1$.
This finishes the proof of Conjecture \ref{conj:support2}.

For Conjecture \ref{conj:coefficient}, 
suppose that $\G_{w}(x)$ can be decomposed as
$$\G_{w}(x)=x^{\lambda}\cdot \prod_{k}F_{k}(x)\prod_{l}G_{l}(x).$$
Since we need to consider the signs of coefficients, the factors $F_{k}(x)$ and $G_{l}(x)$ here are of the following forms
\begin{align*}
    F_{k}(x)&=\sum_{t=0}^{k}x_{p}^{t}x_{p+1}^{k-t}-\sum_{s=1}^{k}x_{p}^{s}x_{p+1}^{k+1-s},\\
 G_{l}(x)&=\sum_{s=1}^{l+1}(-1)^{s-1}e_{s}(x_r,\ldots,x_{r+l}).
\end{align*}

Let $\beta=\lambda+\sum_{k}\beta_{k}+\sum_{l}\beta_{l}\in \supp(\G^{top}_w(x))$. Since $x^{\beta}$ is a monomial of the highest degree, $x^{\beta_{k}}$ and $x^{\beta_{l}}$ are the highest degree monomials in each factor.  Since variables are disjoint in each factor, we have 
\begin{align}\label{eq:conjectures on coefficients}
\sum_{\alpha\leq\beta}C_{w\alpha}x^{\alpha}=x^{\lambda}\cdot\prod_{k}\left(\sum_{\alpha_{k}\leq\beta_{k}}f_{\alpha_{k}}x^{\alpha_{k}}\right)\prod_{l}\left(\sum_{\alpha_{l}\leq\beta_{l}}g_{\alpha_{l}}x^{\alpha_{l}}\right).
\end{align}
where $f_{\alpha_{k}}$ and $g_{\alpha_{l}}$ are respectively coefficients of $x^{\alpha_{k}}$ and $x^{\alpha_{l}}$ in $F_{k}(x)$ and $G_{l}(x)$, both equal to $\pm1$ since each factor is zero-one. Besides, the coefficients in $F_{k}(x)$ and $G_{l}(x)$ alternate in sign with degree, and the lowest terms are positive. 
	
	Suppose $\beta_{k}\in \supp(F_{k}(x))$ with
	$$\beta_{k}=(0,\ldots,s,k+1-s,0,\ldots,0)=s\epsilon_{p}+(k+1-s)\epsilon_{p+1},\text{ where }1\leq s \leq k.$$
Then the only three entries $\leq\beta_{k}$ in $\supp(F_{k}(x))$ are $\beta_{k}$, $\beta_{k}-\epsilon_{p}$ and $\beta_{k}-\epsilon_{p+1}$. 
Since the unique monomial  of highest degree in $G_{l}(x)$ is  $(-1)^{l+1}x_r\cdots x_{r+l}$, whose support is
	$$\beta_{l}=(0,\ldots,0,1,\ldots,1,0,\ldots,0).$$
    We have $\mu\leq\beta_{l}$ for all $x^{\mu}$ in $G_{l}(x)$. 
	
Set $x=(1,\ldots,1)$ in equation \eqref{eq:conjectures on coefficients}, for each $k$ and $l$ we have 
\begin{align*}
\sum_{\alpha_{k}\leq\beta_{k}}f_{\alpha_{k}}&=f_{\beta_{k}-\epsilon_{p}}+f_{\beta_{k}-\epsilon_{p+1}}+f_{\beta_{k}}=1+1-1=1,\\
\sum_{\alpha_{l}\leq\beta_{l}}g_{\alpha_{l}}&=G_{l}(1,\ldots,1)=\binom{l+1}{1}-\binom{l+1}{2}+\cdots+(-1)^{l+1}\binom{l+1}{l+1}=1.    
\end{align*}
Thus we conclude $$\sum_{\alpha\leq\beta}C_{w\alpha}=1,$$ 
as required.
\end{proof}

\footnotesize{

Y.M. C{\scriptsize HEN}, D{\scriptsize EPARTMENT OF} M{\scriptsize ATHEMATICS}, S{\scriptsize ICHUAN} U{\scriptsize NIVERSITY}, C{\scriptsize HENGDU} 610064, P.R. C{\scriptsize HINA.} Email address: 2020141210003@stu.scu.edu.cn

N{\scriptsize EIL}. J.Y. F{\scriptsize AN}, D{\scriptsize EPARTMENT OF} M{\scriptsize ATHEMATICS}, S{\scriptsize ICHUAN} U{\scriptsize NIVERSITY}, C{\scriptsize HENGDU} 610064, P.R. C{\scriptsize HINA.} Email address: fan@scu.edu.cn 

Z.L. Y{\scriptsize E}, S{\scriptsize CHOOL OF} M{\scriptsize ATHEMATICAL} S{\scriptsize CIENCE}, F{\scriptsize UDAN} U{\scriptsize NIVERSITY}, S{\scriptsize HANGHAI} 200433, P.R. C{\scriptsize HINA.}  Email address: 20307110133@fudan.edu.cn

}

\end{document}